\documentclass[11pt,reqno]{amsart}
\usepackage{amssymb,mathrsfs,graphicx,subfigure, enumerate}
\usepackage{amsmath,amsfonts,amssymb,amscd,amsthm,bbm}
\usepackage{graphicx,colortbl,here}

\usepackage{comment}
\usepackage{enumitem}
\usepackage{kotex}

\usepackage{stmaryrd}
\usepackage{mathtools}

\newcommand{\Supp}{\operatorname{supp}}
\newcommand{\diam}{\operatorname{diam}}

\newcommand{\Id}{\operatorname{Id}}

\newtheorem{theorem}{Theorem}[section]
\newtheorem{lemma}{Lemma}[section]
\newtheorem{corollary}{Corollary}[section]
\newtheorem{proposition}{Proposition}[section]
\newtheorem{remark}{Remark}[section]
\newtheorem{definition}{Definition}[section]

\title[Periodic solutions of the nonlocal continuity equation]{Existence of periodic measure-valued solutions to the nonlocal continuity equation via optimal transport}

\author[S.-Y. Ha]{Seung-Yeal Ha}
\address[Seung-Yeal Ha]{\newline Department of Mathematical Sciences and Research Institute of Mathematics \newline Seoul National University, Seoul 08826, Republic of Korea} \email{syha@snu.ac.kr}

\author[G. Hwang]{Gyuyoung Hwang}
\address[Gyuyoung Hwang]{\newline Biomedical Mathematics
Group, Pioneer Research Center for Mathematical and Computational Sciences, Institute for Basic Science,
Daejeon 34126, Republic of Korea} \email{hgy0407@snu.ac.kr}

\author[P. Thieullen]{Philippe Thieullen}
\address[Philippe Thieullen]{\newline Institut de Math\'ematiques de Bordeaux \newline
Universit\'e de Bordeaux, 351 cours de la Lib\'eration, F 33405 Talence, France}
\email{philippe.thieullen@u-bordeaux.fr}

\author[J. Yoon]{Jaeyoung Yoon}
\address[Jaeyoung Yoon]{\newline Department of Mathematics, School of Computation, Information and Technology \newline
Technical University of Munich, Boltzmannstrasse 3, 85748 Garching bei M\"unchen, Germany} \email{wodud1516@gmail.com / jaeyoung.yoon@tum.de}

\thanks{\textbf{Acknowledgment.} The work of S.-Y. Ha is supported by National Research Foundation(NRF) grant funded by the Korea government(MIST) (RS-2025-00514472) and the work of P. Thieullen is supported  the CNRS project, Internation Research Netwok (IRN). J. Yoon would like to thank the Alexander von Humboldt Stiftung for support via a postdoctoral research fellowship.}
\date{\today}

\begin{document}

\maketitle

\begin{abstract}
We investigate the existence of periodic solutions for a class of nonlocal continuity equations, which include mean-field equations derived from systems of coupled oscillators. While periodic solutions at the particle level have been studied through the construction of a Poincaré map on a section of an invariant set, extending this analysis to the level of continuity equations presents nontrivial challenges. In particular, setting an appropriate topology for the infinite-dimensional space to show invariance and apply the fixed point argument is not easy. To overcome this difficulty, we use fixed point theorem for geodesically convex spaces constructed by optimal transportation. Specifically, from the disintegration with respect to stationary variable, we define a metric using the Wasserstein-$2$ distance over one-dimensional space, which yields a $CAT(0)$ space. In this topology, we construct an invariance set of probability measures and prove the existence of the periodic measure-valued solution from Schauder's fixed point theorem on geodesically convex spaces. As a corollary, our method directly gives an existence of periodic graph measure solution.
\end{abstract}

\section{Introduction}\label{sec:1}
\setcounter{equation}{0} 
Infinite-dimensional dynamical systems for collective behaviors exhibit a variety of interesting asymptotic behaviors and mathematical challenges. Such infinite-dimensional systems are often described by partial differential equations with nonlocal terms. For example, in fluid dynamics, various aspects of the Hamiltonian structure of the incompressible Euler equations have been studied including the twist property \cite{D-E-J} and the stability of quasi-periodic orbits \cite{B-M}. In plasma physics, Hamiltonian system governed by the Vlasov–Poisson equation exhibits damping \cite{M-V} which is a nontrivial phenomenon arising from the transition to an infinite-dimensional setting. Outside PDE context, Bahsoun et  al \cite{B-L-S} investigated infinite systems of coupled Anosov diffeomorphisms using transfer operator techniques. Recently, dynamical system on the space of measures gained increasing attentions with the connection to the optimal transport \cite{Pic}. In this context, we study an infinite dimensional dynamical system of coupled oscillators in the mean field limit. The oscillators are described by a  measure-valued function $\mu_t(dx,d\omega)$ that encodes at time $t$ the number of oscillators in the volume $dx \times d\omega$ where $x \in\mathbb{R}$ denotes the lifted phase  of a particular oscillator in $\mathbb{S}^1 = \mathbb{R}/2\pi\mathbb{Z}$ and $\omega \in \mathbb{R}$ denotes  its intrinsic frequency (its own virtual frequency if the oscillator were not coupled with the whole population). The distribution $\mu_t$ supported on $\mathbb{R} \times  \mathbb{R}$ is thus a solution of a continuity equation (or a scalar conservation law). Moreover the flux is supposed to be nonlocal as it encodes the coupling between all the oscillators.  The nonlocal continuity equation that we are interested in  appears in the mean-field approximation of interacting particle systems in collective dynamics, such as to name a few,  the kinetic Winfree and Kuramoto models \cite{A-B, Al, A-H-P, A-S, Ku}, the kinetic Cucker-Smale model \cite{H-Liu, H-T}.

In a mesoscopic regime in which the number of oscillators is sufficiently large, statistical description of a large particle system can be effectively approximated by the Vlasov equation via a suitable mean-field limit or graph limit. In this paper, we are interested in periodic measure-valued solutions. In the particular case of a density $\mu_t(dx,d\omega) = f_t(x,\omega) dx \, d\omega$, the periodicity in time is understood in the sense $f_{t+T}(x,\omega) = f_t(x-2\pi,\omega)$ for some period $T>0$ when the frequency of the density is positive. The density $f_t(\cdot,\omega)$ at time $t$ has been lifted to $\mathbb{R}$ in order to distinguish oscillators that rotate much further than others. The  evolution of $\mu_t$ is thus governed by the Cauchy problem of the following continuity equation with  nonlocal flux $F[\mu]$:  
\begin{equation}
\begin{cases} \label{A-1}
\displaystyle \partial_t \mu + \partial_x(F[\mu] \mu) = 0, \quad (t, x, \omega) \in \mathbb R_+ \times \mathbb{R}^2, \\
\displaystyle \mu_0(dx, d\omega) = \mu^{\text{\rm init}}(dx,d\omega),
\end{cases}
\end{equation}
where $\mu^{\text{\rm init}}$ denotes the distribution of the initial phases $x$ of each oscillator lifted to some interval of period  $2\pi$,  $\omega \in \mathbb{R}$ denotes the intrinsic frequency of the oscillator $x \in \mathbb{R}$, $F[\mu](x,\omega)$ is simultaneously $2\pi$-periodic in $\mu$ and $x$ (explained in \eqref{As3}) and  denotes the interaction of the whole population $\mu$ on a specific oscillator $x$ of frequency $\omega$. In that model the oscillators are distinguishable and their own frequencies $\omega$ are kept unchanged along the time as it can been observed in \eqref{A-1} that the derivative of the flux  with respect to $\omega$ is not present. In this sense, we can regard \eqref{A-1} as a ``{\it parameterized nonlocal continuity equation}" or equivalently a ``{\it parameterized scalar conservation law with a non-local flux}".  We refer to \cite{C-N-K-P, M-M-Z-2, M-M-Z-1, Z-M} for the related results on the nonlocal scalar conservation law. As time evolves the lifted distribution $\mu_t$ may span an interval larger than $2\pi$. That phenomenon appears for instance when two different uncoupled clusters evolved with distinct frequencies that may overlap mod $2\pi$ but not on $\mathbb{R}$. Such phenomenon could not have been observed if we had written the continuity equation in $\mathbb{S}^1$. Nevertheless, the interaction between the oscillators is short range (less than $2\pi$) and not long range as in $\mathbb{R}$.

In collective dynamics, finite-dimensional ODE models have been extensively studied in \cite{C-H-J-K}, and collective behaviors occur, when the interactions among particles are sufficiently strong. These collective dynamics is often measured by some nonlinear functionals. However, many previous studies have focused on the regime where the interaction is strong enough. In the case of a strong interaction the oscillators converge to a unique stationary configuration. On the contrary, a phase transition from an ordered state (coherence) to a disordered state (incoherence) is achieved when the coupling decreases. Yet, there are also many interesting phenomena in the intermediate interaction regime \cite{K-H-Y}. In \cite{O-K-T2, O-K-T}, in the case of a finite number of oscillators in the Winfree model, it is shown that if the initial data in space and in the natural frequencies are chosen in a suitable manner, then a time-periodic configuration with distinct oscillators is obtained. Indeed, the approach is extended to general finite-dimensional interacting particle systems in \cite{Oukil}. The motivation of our study is to generalize this phenomenon to an infinite-dimensional setting via the Cauchy problem \eqref{A-1} and a non discrete $\omega$-distribution. More precisely, we are interested in models where the $\omega$-distribution, that is the projection of $\mu_t$ onto the $\omega$-parameter, say $\nu(d\omega) = \mu_t(\mathbb{R},d\omega)$ (time independent), is any given distribution.

However, generalization of the analysis in finite-dimensional particle system to infinite-dimensional space of measures is not trivial. Using the method of characteristics, similar to the finite-dimensional case, we derive a Gr\"onwall-type differential inequality that governs the dispersion of the support of a probability measure. Nevertheless, a significant challenge lies in the application of Schauder-type fixed point theorem in infinite dimensional setting, as the dispersion relation does not directly give the compactness of Poincar\'e section in the invariant set. In other words, it is not easy to obtain both compactness and invariance along flow at the same time. 

In order to tackle this problem, we use disintegration of measure and optimal mass transport to define a proper metric in the space of measure-valued solutions to make it geodesically convex. Then, we obtain the Lipschitz property related to 2-Wasserstein distance by computing dispersion estimate of characteristic flow. This process gives rise to invariance set along the flow of the continuity equation. Inside this invariance set, we then construct a Poincar\'e section and use Schauder's fixed point theorem for the geodesically convex space to show the existence of a periodic measure-valued solution. 

Before we move on further, we recall the concept of a measure-valued solution to \eqref{A-1}. Let $\mathcal{P}(\mathbb{R}^2)$ denote the space of Borel probability measures on $\mathbb R^2$, and let $\mathcal P_2(\mathbb {R}^2)$ denotes the subset of those with the finite second moment:
\[
\int_{\mathbb{R}^2}(|x|^2+|\omega|^2) \mu(dx, d\omega) <\infty.
\]
For $\mu \in {\mathcal P}(\mathbb R^2)$ and a test function $\varphi \in {\mathcal C}_0(\mathbb R^2)$, we write
\[ \langle \mu, \varphi \rangle := \int_{\mathbb R^2} \varphi(x, \omega) \mu(dx, d\omega). \]
\begin{definition} \label{D1.1}
\emph{(Measure-valued solutions)}
Let $T \in (0, \infty)$. We say  that $\mu \in {\mathcal C}([0,T); \mathcal{P}_2(\mathbb{R}^2))$ is a measure-valued solution to \eqref{A-1} with initial datum $\mu_0 \in \mathcal{P}_2(\mathbb{R}^2)$  if the following conditions hold.
\begin{enumerate}
\item 
For all test function $\varphi \in {\mathcal C}_0(\mathbb R^2)$, 
\[ \mbox{the map}~t \mapsto~\langle \mu_t, \varphi \rangle~\mbox{is continuous}. \]
\item 
For all $\varphi \in {\mathcal C}^1_0([0, T) \times \mathbb{R}^2)$, $\mu$ satisfies the weak formulation of \eqref{A-1}:
\[
\langle \mu_t, \varphi(t)\rangle = \langle \mu_0, \varphi(0)\rangle+ \int_0^t \Big\langle \mu_s, \partial_s \varphi + F[\mu_s]\partial_x \varphi \Big\rangle ds.
\]
\end{enumerate}
\end{definition}
\vspace{0.2cm}
In this paper, we address the following issue on the existence of  a time-periodic graph measure solution: 
\vspace{0.1cm}
\begin{quote} 
``Is there a time-periodic measure-valued solution to \eqref{A-1} that models a collection of oscillators with a continuous distribution of intrinsic frequencies?"
\end{quote}
\vspace{0.2cm}
The main result of this paper is to show the existence of a time-periodic measure-valued solution to \eqref{A-1} as a fixed point of Poincar\'e map defined on a positively invariant set (see Theorem \ref{main}). The problem of the stability or the existence of many periodic solutions with an open basin of attraction is left to a future project.

The rest of this paper is organized as follows. In Section \ref{sec:2}, we first recall previous results for the special types of nonlocal continuity equation such as the kinetic Kuramoto and Winfree equations, and then we introduce several structural assumptions on the nonlocal continuity equation $\eqref{A-1}$. We also provide several preparatory lemmas for later sections. At the end of this section, we present our main results on the existence of a periodic measure-valued solution. In Section \ref{sec:3}, we introduce a metric using the disintegration of probability measures and Wasserstein distance, and then provide a sufficient condition for a metric space to be CAT(0) space. To best of our knowledge, it is the first time that some tools in hyperbolic geometry is used to exhibit particular solutions of the continuity equation. In Section \ref{sec:4}, we study a dispersion estimate using the characteristic flow to construct a positively invariant set along the dynamics generated by \eqref{A-1}. In Section \ref{sec:5}, as the subset of the invariance set constructed in previous section, we further construct an invariant set along the flow,  and then we use it to show the existence of a periodic measure-valued solution. In Section \ref{sec:6}, assuming a twist property for the vector field, we apply our method to prove the existence of a periodic graph measure solution which is a periodic solution with a further regularity. Finally, Section \ref{sec:7} is devoted to a brief summary of our main results and  some remaining issues for future works.

\section{Preliminaries}\label{sec:2}
\setcounter{equation}{0}
In this section, we study basic a priori estimates on the parameterized nonlocal continuity equation, and review related previous results for the kinetic Winfree and Kuramoto equations. 

\subsection{Previous results}
In this subsection, we briefly review the previous results for the particle Winfree model \cite{O-K-T2, O-K-T}:
\begin{equation}\label{E2.1}
\frac{dx_i}{dt} = \omega_i + \frac{\kappa}{N}\sum_{j =1}^{N} S(x_i) I(x_j), \quad i \in \{1,\dots,N\},
\end{equation}
where $S$ and $I$ are assumed to be $C^2$ and $2\pi$-periodic functions which are called as the sensitivity and influence functions, respectively. In literature \cite{A-S}, we often take the following ansatz:
\[ S(x) = -\sin x, \quad I(x):= 1 + \cos x.  \]
According to the choice of $S(x) = -\sin$, the phase $x=0$ is stable and the phase $x=-\pi$ is unstable. The Winfree model \eqref{E2.1} exhibits a phase transition from disordered state to ordered state, as the parameter $\kappa$ increases from zero. Namely, starting from the incoherent state, the configuration transits to partially locked-state and  completely locked state as the coupling strength increases. We refer to \cite{A-S,HPR} for a detailed discussion of the Winfree model and its phase-locking behavior. More precisely, it is shown in \cite{HPR} that for a sufficiently large coupling $\kappa \gg 1$, the Winfree model exhibits a phase-locked state. In contrast, there are very few works on the dynamics of the Winfree model in a small coupling regime. The authors in \cite{O-K-T2, O-K-T}, for a discrete distribution of frequencies, constructed an invariant set in a small coupling regime, and showed the existence of the periodic solution. In this work, we extend the results in \cite{O-K-T} to the corresponding mean-field equation \eqref{A-1} which corresponds to the infinite dimensional counterpart of the particle model. The crucial step to construct an invariant set is to obtain Gr\"onwall-type differential inequality and use a bootstrapping argument. 

We end this subsection by  discussing  the existence of a periodic solution for a very simple periodic vector field. The goal of this toy model is to explain the fundamental role of the assumption ``synchronization hypothesis'' \eqref{Equation:SynchronizationHypothesis} introduced later in section \ref{Subsection:SufficientFramework}. We consider the following  affine differential equation with periodic coefficients:
\begin{equation}
\begin{cases} \label{B-10}
\displaystyle {\dot y}(t) = \alpha+\beta(t) y(t), \quad t > 0,  \\
\displaystyle y \Big|_{t = t_0} = y_0,
\end{cases}
\end{equation}
where $\alpha$ is a positive constant, and $\beta$ is a ${\mathcal C}^1$ periodic function satisfying
\begin{equation} \label{B-11}
\beta(t + 2\pi) = \beta(t), \quad \forall~t \in \mathbb R, \quad \int_0^{2\pi}\beta(s)ds< 0.
\end{equation}
By the method of an integrating factor, one can find the explicit formula:
\begin{align}\label{B-11-0}
    y(t)=y_0\exp\left(\int_{t_0}^t\beta(s)ds\right)+\alpha\int_{t_0}^t\exp\left(\int_s^t\beta(\tau)d\tau\right)ds.
\end{align}
Note that the first term in the R.H.S. of \eqref{B-11-0} is not $2\pi$-periodic for a generic $y_0$:
\begin{align*}
    y_0\exp\left(\int_{t_0}^{t+2\pi}\beta(s)ds\right)=y_0\exp\left(\int_{t_0}^t\beta(s)ds+\int_0^{2\pi}\beta(s)ds\right)\ne y_0\exp\left(\int_{t_0}^t\beta(s)ds\right)
\end{align*}
unless $\beta(t)$ satisfies
\begin{align*}
    \int_0^{2\pi}\beta(s)ds=0.
\end{align*}
Unfortunately, our given condition $\eqref{B-11}_2$ does not satisfy the mean zero condition.  However, in the following lemma which extends the work in \cite{O-K-T}, we will see that for a special initial datum $y_0$, the Cauchy problem \eqref{B-10} yields the unique $2\pi$-periodic solution. 

\begin{lemma}\label{L2.2}
Let $y = y(t)$ be a global unique solution to \eqref{B-10} - \eqref{B-11} whose explicit representation is given by \eqref{B-11-0}. Then, the following assertions hold.
\begin{enumerate}
\item \label{Item:L2.2_1} The solution $y(t)$ is $2\pi$-periodic if and only if the initial datum $y_0$ is given by
\begin{align}\label{B-11-2}
    y_0=y(t_0)=\frac{\alpha\int_{t_0}^{t_0+2\pi}\exp\left(\int_s^{t_0}\beta(\tau)d\tau\right)ds}{\exp\left(-\int_0^{2\pi}\beta(s)ds\right)-1}.
\end{align}
In other words, the solution
\begin{align}\label{B-14-2}
    y(t)=\frac{\alpha\int_t^{t+2\pi}\exp\left(\int_s^t\beta(\tau)d\tau\right)ds}{\exp\left(-\int_0^{2\pi}\beta(s)ds\right)-1}
\end{align}
is the unique $2\pi$-periodic solution to $\eqref{B-10}_1$.

\item \label{Item:L2.2_2}
The periodic solution $y(t)$ satisfies the following upper and lower bound estimates:
\begin{align*}
    \frac{2\pi\alpha\exp\left(-\int_0^{2\pi}\beta^+(\tau)d\tau\right)}{\exp\left(-\int_0^{2\pi}\beta(s)ds\right)-1}\le y(t)\le\frac{2\pi\alpha\exp\left(\int_0^{2\pi}\beta^-(\tau)d\tau\right)}{\exp\left(-\int_0^{2\pi}\beta(s)ds\right)-1},
\end{align*}
where functions $\beta^{\pm}(s)$ are defined by
\[  \beta^-(s):= \max \{ 0,-\beta(s) \} \quad \mbox{and} \quad \beta^+(s) :=\max \{ 0,\beta(s) \}. \]

\item \label{Item:L2.2_3} 
Assume $\int_0^{2\pi} |\beta|(\tau) \,d \tau \leq \beta_{\max}$ and $\int_0^{2\pi} \beta(\tau) \, d\tau \leq - \beta_{min} < 0$. Then
\begin{gather}
\frac{\exp\left(\int_0^{2\pi}\beta^-(\tau)d\tau\right)}{\exp\left(-\int_0^{2\pi}\beta(s)ds\right)-1} \leq \frac{\exp( \beta_{max})}{\beta_{min}}. \label{Equation:L2.2_01}
\end{gather}

\end{enumerate}
\end{lemma}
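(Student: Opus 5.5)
For item (1), we work directly from the explicit formula \eqref{B-11-0}. Set $B:=\int_0^{2\pi}\beta(s)\,ds<0$ and $P(t):=\int_{t_0}^t\beta$. Since $\beta$ is $2\pi$-periodic, a solution of the periodic linear ODE $\eqref{B-10}_1$ is $2\pi$-periodic if and only if $y(t_0+2\pi)=y(t_0)$. Indeed, $t\mapsto y(t+2\pi)$ solves the same equation, so uniqueness for the Cauchy problem forces it to coincide with $y$ once the two agree at $t_0$. Evaluating \eqref{B-11-0} at $t_0+2\pi$ gives
\[
y(t_0+2\pi)=y_0e^{B}+\alpha\int_{t_0}^{t_0+2\pi}\exp\Big(\int_s^{t_0+2\pi}\beta\Big)ds ,
\]
and the periodicity condition $y(t_0+2\pi)=y_0$ is a linear equation in $y_0$. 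Because $e^B\neq 1$, it has a unique solution. To put it in the form \eqref{B-11-2}, we multiply through by $e^{-B}$ and use $\int_s^{t_0+2\pi}\beta-B=\int_s^{t_0}\beta$, which follows from periodicity. This yields $y_0(e^{-B}-1)=\alpha\int_{t_0}^{t_0+2\pi}\exp(\int_s^{t_0}\beta)\,ds$. Since $t_0$ is arbitrary, and the periodic solution evaluated at any $t$ is the initial datum for the same problem started at $t$, formula \eqref{B-14-2} follows. Uniqueness holds because any two periodic solutions agree at $t_0$.

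For item (2), we bound the integrand in \eqref{B-14-2}. For $s\in[t,t+2\pi]$ we have $\int_s^t\beta=-\int_t^s\beta$. Writing $\beta=\beta^+-\beta^-$ and using $0\le\int_t^s\beta^\pm\le\int_t^{t+2\pi}\beta^\pm=\int_0^{2\pi}\beta^\pm$ (periodicity again), we get
\[
-\int_0^{2\pi}\beta^+\le\int_s^t\beta\le\int_0^{2\pi}\beta^- .
\]
Exponentiating and integrating over an interval of length $2\pi$ gives both bounds, since the denominator $e^{-B}-1$ is positive.

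For item (3), the numerator satisfies $\int\beta^-\le\int|\beta|\le\beta_{\max}$. For the denominator we use $e^{x}-1\ge x$ together with $-B\ge\beta_{\min}>0$, which gives $e^{-B}-1\ge\beta_{\min}$. Combining the two estimates yields \eqref{Equation:L2.2_01}.

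The only step needing care is the index bookkeeping in item (1): the shift identity for integrals of the periodic $\beta$ and the correct sign inside the exponent. Everything else is elementary.
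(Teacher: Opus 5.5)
Your proof is correct. Items (2) and (3) match the paper. Your two-sided bound on $\int_s^t\beta$ is exactly the paper's estimate \eqref{B-16}; the paper first substitutes $s\mapsto t+s$ to reach \eqref{B-15}, which you skip. Item (3) uses $e^x-1\ge x$ in the same way.

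The genuine difference is in the ``if'' direction of item (1) and in how you obtain \eqref{B-14-2}.

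\begin{itemize}
\item The paper computes explicitly. It substitutes the value \eqref{B-11-2} into \eqref{B-11-0} and splits and shifts integrals through \eqref{B-14}--\eqref{B-14-1} to reach the closed form \eqref{B-14-2}. It then checks $y(t+2\pi)=y(t)$ for that closed form by a change of variables.
\item You argue structurally. Since $t\mapsto y(t+2\pi)$ solves the same equation, uniqueness reduces periodicity to the single condition $y(t_0+2\pi)=y(t_0)$. Both directions then follow from one affine equation in $y_0$ whose coefficient $1-e^{B}$ is nonzero. You get \eqref{B-14-2} by applying the necessary condition with an arbitrary time $t$ in place of $t_0$.
\end{itemize}

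Your route is shorter and makes the mechanism visible. The periodic solution is the unique fixed point of the period-$2\pi$ affine map, which is the same Poincar\'e-map viewpoint the paper later uses for the PDE. The paper's computation is more explicit: it verifies directly, without the time-translation argument, that the datum \eqref{B-11-2} generates the closed-form solution \eqref{B-14-2} and that this formula is $2\pi$-periodic.
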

\begin{proof} Although the verification of the assertions is straightforward, and it can be found in \cite{O-K-T}, we provide their proofs here for readers' convenience.

\vspace{.2cm}

\it Proof of Item \ref{Item:L2.2_1}. We split the proof into two cases.

\vspace{.2cm}

\noindent $\bullet$~Case A $(\Longrightarrow$ direction): Suppose that $y$ is $2\pi$-periodic, in particular, 
\begin{equation} \label{B-13}
y(t_0+2\pi) = y(t_0) = y_0. 
\end{equation}
Then, we use \eqref{B-11-0} and \eqref{B-13} to find 
\begin{align*}
    y_0\exp\left(\int_{t_0}^{t_0+2\pi}\beta(s)ds\right)+\alpha\int_{t_0}^{t_0+2\pi}\exp\left(\int_s^{t_0+2\pi}\beta(\tau)d\tau\right)ds=y_0,
\end{align*}
which satisfies \eqref{B-11-2}.

\vspace{0.2cm}

\noindent $\bullet$~Case B $(\Longleftarrow$ direction):  Suppose $y_0$ is given by the explicit value in \eqref{B-11-2}. Then, we use \eqref{B-11-0} to get
\begin{align}\label{B-14}
    \begin{aligned}
        y(t)&=\frac{\alpha\int_{t_0}^{t_0+2\pi}\exp\left(\int_s^{t_0+2\pi}\beta(\tau)d\tau\right)ds}{1-\exp\left(\int_0^{2\pi}\beta(s)ds\right)}\exp\left(\int_{t_0}^t\beta(s)ds\right)+\alpha\int_{t_0}^t\exp\left(\int_s^t\beta(\tau)d\tau\right)ds\\
        &=\frac{\alpha\int_{t_0}^{t_0+2\pi}\exp\left(-\int_{t_0}^s\beta(\tau)d\tau\right)ds}{\exp\left(-\int_0^{2\pi}\beta(s)ds\right)-1}\exp\left(\int_{t_0}^t\beta(s)ds\right)+\alpha\int_{t_0}^t\exp\left(\int_s^t\beta(\tau)d\tau\right)ds\\
        &=\frac{\alpha\int_{t_0}^{t_0+2\pi}\exp\left(\int_s^t\beta(\tau)d\tau\right)ds}{\exp\left(-\int_0^{2\pi}\beta(s)ds\right)-1}+\alpha\int_{t_0}^t\exp\left(\int_s^t\beta(\tau)d\tau\right)ds\\
        &=\frac{\alpha\int_{t_0}^t\exp\left(\int_s^t\beta(\tau)d\tau\right)ds+\alpha\int_{t+2\pi}^{t_0+2\pi}\exp\left(\int_s^t\beta(\tau)d\tau\right)ds}{\exp\left(-\int_0^{2\pi}\beta(s)ds\right)-1}\\
        &\hspace{.4cm}+\frac{\alpha\int_t^{t+2\pi}\exp\left(\int_s^t\beta(\tau)d\tau\right)ds}{\exp\left(-\int_0^{2\pi}\beta(s)ds\right)-1}+\alpha\int_{t_0}^t\exp\left(\int_s^t\beta(\tau)d\tau\right)ds.
    \end{aligned}
\end{align}
On the other hand, note that 
\begin{align}\label{B-14-1}
    \begin{aligned}
        \int_{t+2\pi}^{t_0+2\pi}\exp\left(\int_s^t\beta(\tau)d\tau\right)ds&=-\int_{t_0}^t\exp\left(\int_{s+2\pi}^t\beta(\tau)d\tau\right)ds\\
        &=-\exp\left(-\int_0^{2\pi}\beta(s)ds\right)\int_{t_0}^t\exp\left(\int_s^t\beta(\tau)d\tau\right)ds.
    \end{aligned}
\end{align}
Now, we combine \eqref{B-14} and \eqref{B-14-1} to see \eqref{B-14-2}. Finally, it is enough to show that $y(t)$ is $2\pi$-periodic. It follows from \eqref{B-14}  and \eqref{B-14-2} that 
\begin{align*}
    y(t+2\pi)&=\frac{\alpha\int_{t+2\pi}^{t+4\pi}\exp\left(\int_s^{t+2\pi}\beta(\tau)d\tau\right)ds}{\exp\left(-\int_0^{2\pi}\beta(s)ds\right)-1}=\frac{\alpha\int_{t}^{t+2\pi}\exp\left(\int_{\tilde s+2\pi}^{t+2\pi}\beta(\tau)d\tau\right)d\tilde s}{\exp\left(-\int_0^{2\pi}\beta(s)ds\right)-1}\\
    &=\frac{\alpha\int_{t}^{t+2\pi}\exp\left(\int_{\tilde s}^{t}\beta(\tau)d\tau\right)d\tilde s}{\exp\left(-\int_0^{2\pi}\beta(s)ds\right)-1}=y(t),
\end{align*}
where we used a change of variable $\tilde s = s - 2\pi$ and $2\pi$-periodicity of $\beta$.

\vspace{.2cm}

\it Proof of Item \ref{Item:L2.2_2}. We rewrite \eqref{B-14-2} as follows:
\begin{align}\label{B-15}
    \begin{aligned}
        y(t)&=\frac{\alpha\int_t^{t+2\pi}\exp\left(\int_s^t\beta(\tau)d\tau\right)ds}{1-\exp\left(\int_0^{2\pi}\beta(s)ds\right)}\exp\left(\int_0^{2\pi}\beta(s)ds\right)\\
        &=\frac{\alpha\int_0^{2\pi}\exp\left(-\int_t^{t+s}\beta(\tau)d\tau\right)ds}{\exp\left(-\int_0^{2\pi}\beta(s)ds\right)-1},
    \end{aligned}
\end{align}
where we used
\begin{align*}
    \begin{aligned}
        &\int_t^{t+2\pi}\exp\left(\int_s^t\beta(\tau)d\tau\right)ds\exp\left(\int_0^{2\pi}\beta(s)ds\right)\\
        &\hspace{1cm}=\int_t^{t+2\pi}\exp\left(\int_s^{t+2\pi}\beta(\tau)d\tau\right)ds=\int_0^{2\pi}\exp\left(\int_{t+\tilde s}^{t+2\pi}\beta(\tau)d\tau\right)d\tilde s\\
        &\hspace{1cm}=\int_0^{2\pi}\exp\left(\int_t^{t+2\pi}\beta(\tau)d\tau)-\int_t^{t+\tilde s}\beta(\tau)d\tau\right)d\tilde s\\
        &\hspace{1cm}=\int_0^{2\pi}\beta(\tau)d\tau\int_0^{2\pi}\exp\left(-\int_t^{t+\tilde s}\beta(\tau)d\tau\right)d\tilde s.
    \end{aligned}
\end{align*}
by using the change of variable $\tilde s=s-t$. Now, we take the decomposition of $\beta$ as follows:
\[ \beta = \beta^{+} - \beta^{-}\quad\mbox{where} \quad  \beta^+ \geq 0, \quad \beta^- \geq 0. \]
This deduce
\begin{align*}
    \begin{aligned}
        \exp\left(-\int_t^{t+s}\beta(\tau)d\tau\right)=\exp\left(-\int_t^{t+s}\beta^+(\tau)d\tau\right)\exp\left(\int_t^{t+s}\beta^-(\tau)d\tau\right),
    \end{aligned}
\end{align*}
hence, we get
\begin{align}\label{B-16}
    \begin{aligned}
        \exp\left(-\int_0^{2\pi}\beta^+(\tau)d\tau\right)\le\exp\left(-\int_t^{t+s}\beta(\tau)d\tau\right)\le\exp\left(\int_0^{2\pi}\beta^-(\tau)d\tau\right)
    \end{aligned}
\end{align}
due to $s\in(0,2\pi)$. Finally, we combine \eqref{B-15} and \eqref{B-16} to get the desired estimates. 

\it Proof of Item \ref{Item:L2.2_3}. This is a direct consequence of the inequality $\mathrm{exp}(x)-1\geq x $, applied to the denominator. 

\end{proof}

\subsection{A sufficient framework $\boldsymbol{({\mathcal A})}$} \label{Subsection:SufficientFramework}
Let $\mu\in\mathcal C([0,\infty);\mathcal P_2(\mathbb R^2))$ be a measure-valued solution to \eqref{A-1} and $\nu_t \in \mathcal{P}_2(\mathbb{R})$ be the $\omega$-marginal of $\mu_t$ for each time $t\ge0$. As for $\mathcal P_2(\mathbb R^2)$, the set $\mathcal{P}_2(\mathbb{R})$ denotes the set of probability measures on $\mathbb{R}$ with finite second moment. Since the equation \eqref{A-1} does not contain any derivative in the variable $\omega$, its marginal $\nu_t$ is independent of the time $t$. Hence, we consider the framework with respect to a fixed one-dimensional probability measure $\nu\in\mathcal P_2(\mathbb R)$ and denote its support by $\Omega$:
\[
\Supp(\nu):=\Omega\subset\mathbb R.\]
Throughout this paper, we assume that the domain $\Omega$ is bounded and define $\gamma$ to be the diameter of the support of $\nu$:
\begin{align}\label{nu_cond}
    |\Omega|<\infty\quad\mbox{and}\quad\gamma:=\sup_{\omega,\omega'\in\Omega}|\omega-\omega'|<\infty.
\end{align}
We denote by $\mathcal P_2^\nu(\mathbb R^2)$ the set of all probability measure $\mu\in\mathcal P_2(\mathbb R^2)$ with the $\omega$-marginal $\nu$, i.e.,
\begin{align*}
    \mathcal P_2^\nu(\mathbb R^2):=\Big\{\mu\in\mathcal P_2(\mathbb R^2) : \mu(\mathbb{R} \times B) =\nu(B),\ \forall\,  B\in\mathcal{B}(\mathbb{R}) \Big\},
\end{align*}
where $\mathcal{B}(\mathbb{R})$ denotes the set of  Borel sets of $\mathbb{R}$. 

As can be seen in Lemma \ref{L2.2}, even for a linear ODE with periodic coefficients, existence of periodic solution is a non-trivial matter. In what follows, we describe a set of conditions on the nonlocal velocity field $F[\mu]$ in \eqref{A-1} for the existence of a periodic solution. We point out that a positive parameter $\kappa$ represents the strength of the coupling between the oscillators that is supposed to be small, and assume that $ \gamma = \mathcal{O}(\kappa)$. The other parameters $A,B,I,M$ are not necessarily small and are supposed to represent the uncoupled model.

\begin{itemize}




\item
($\mathcal{A}_1$) (Mean field regularity hypothesis): For any $\mu, \mu' \in \mathcal P_2^\nu(\mathbb R^2)$, there exists a constant $M>0$ such that
\begin{equation*}\label{As1}
\sup_{(x,\omega)\in\mathbb R\times\Omega}\Big|\partial_x^\alpha F[\mu](x,\omega)-\partial_x^\alpha F[\mu'](x,\omega)\Big|\le\kappa MW_1(\mu,\mu'),\quad\forall~\alpha=0,1,
\end{equation*}
where $W_1$ denotes the 1-Wasserstein distance.

\vspace{0.1cm}

\item
($\mathcal{A}_2$) (Space regularity hypothesis): $F[\mu]$ is $C^2(\mathbb{R}^2)$  for any $\mu \in \mathcal P_2^\nu(\mathbb R^2)$. Moreover, there exists a constant $I>0$ such that 
\begin{gather*}
    \|\partial_x F[\mu]\|_{\infty}\leq \kappa I, \quad\|\partial^2_x F[\mu]\|_{\infty}    \leq \kappa I,\quad\|\partial_\omega F[\mu]\|_{\infty} \leq I.
\end{gather*}

\vspace{0.1cm}

\item
($\mathcal{A}_3$)(Lifted periodicity hypothesis):~$F[\cdot ](\cdot,\cdot)$ is $2\pi$-periodic in the following sense: 
\begin{equation}\label{As3}
F[\tau[2\pi] _\sharp \mu](x + 2\pi, \omega) = F[\mu](x, \omega),  \quad \forall\, \mu\in\mathcal P_2^\nu(\mathbb R^2),\  \forall\, (x,\omega) \in \mathbb{R}^2.
\end{equation}
where $\tau[2\pi] :\mathbb{R}^2 \rightarrow \mathbb{R}^2$ denotes the shift $\tau[2\pi](x, \omega) = (x +2\pi, \omega)$ and $\tau[2\pi]_\sharp \mu$ denotes the pushforward of $\mu$ by the shift. The physical meaning of \eqref{As3} states that if both an individual $x$ and its environment modeled by $\mu$ are translated by $2\pi$ then the coupling between $x$ and $\mu$ is unchanged.

\vspace{0.1cm}

\item
($\mathcal{A}_4$) (No stationarity  hypothesis): The vector field $F$ admits a uniform lower bound $A>0$. More precisely for a cluster of individuals located at $x$ and distribution of intrinsic frequencies $\nu$  the strength of the vector field satisfies
\begin{equation}\label{As4}
\inf_{x \in \mathbb{R}} F[\delta_x \otimes \nu](x, \omega_c) \geq A,
\end{equation}
where we define the  mean frequency $\omega_c$ by
\begin{align*}
    \omega_c:=\int_{\mathbb R}\omega \, \nu(d\omega).
\end{align*}

\vspace{0.1cm}

\item
($\mathcal{A}_5$) (Synchronization hypothesis):~There exists a constant $B>0$ such that
\begin{gather}
\int_0^{2\pi} \frac{\partial_x F[\delta_x \otimes \nu](x, \omega_c) }{F[\delta_x \otimes \nu](x, \omega_c)}dx\le-\kappa B<0. \label{Equation:SynchronizationHypothesis}
\end{gather}
The last hypothesis is similar to the hypothesis \eqref{B-11} for the toy model  and force the collective dynamics in the continuous setting  to be compactly supported.
\end{itemize}

\vspace{0.5cm}

For constants $A,B,I,M>0$ and parameters $\kappa,\gamma>0$, we set the collections $\mathcal F_{A,B,I,M}^\nu(\kappa,\gamma)$ of all functionals $F$ satisfying $(\mathcal A_1)-(\mathcal A_5)$, and omit the dependency on constants,
\[\mathcal F^\nu(\kappa,\gamma)=\mathcal F_{A,B,I,M}^\nu(\kappa,\gamma),\]
if the context is clear. Unless otherwise specified, we assume throughout this paper that the functional $F$ belongs to the class $\mathcal F^\nu_{A,B,I,M}(\kappa,\gamma)$ for suitable constants $A,B,I,M>0$ and parameters $\kappa,\gamma>0$. We will specify the conditions on these parameters later.

\begin{remark}
Let $F \in \mathcal F^\nu_{A,B,I,M}(\kappa,\gamma)$. 

\vspace{0.2cm}
{\it (1)} In ($\mathcal{A}_2$), we observe that the constant $\kappa$ is included only when the $x$-derivative is included. This is motivated by the property of the kinetic Kuramoto \eqref{kuramoto} and Winfree equations \eqref{winfree}. 

\vspace{0.2cm}
{\it (2)} We provide some motivation for $(\mathcal{A}_5)$. Consider a nonlinear continuous flow:
\begin{equation}\label{S1}
{\dot x} = F(x), \quad F(x + 2\pi) = F(x), \quad x \in \mathbb{R},
\end{equation}
and let $\varphi$ be the solution to \eqref{S1}: 
\begin{equation*} \label{C-0-0}
{\dot \varphi} = F(\varphi). 
\end{equation*}
We linearize the flow \eqref{S1} around the solution curve $\varphi(t)$. We set 
\[ x = \varphi + \Delta x \]
Then, the perturbation $\Delta x$ satisfies 
\begin{align*}
\frac{d}{dt}(\varphi + \Delta x) = \frac{d}{dt} \varphi + \frac{d}{dt}\Delta x = F(\varphi + \Delta x) = F(\varphi) + \partial_x F(\varphi) \Delta x + \mathcal{O}(1) \Delta x^2.
\end{align*}
Hence, the corresponding linear flow for the fluctuation is given as follows
\[
\frac{d}{dt}\Delta x = \partial_x F(\varphi) \Delta x.
\]
We do the change of variable $s := x(t)$. Then, we have
\[ \frac{ds}{dt} = {\dot x}(t) = F(x(t)) = F(s).
\] 
We now consider  $\Delta x$ as a function of $s$. Then, we have
\begin{equation*}
\frac{d}{ds}\Delta x = \frac{d}{dt} \Delta x \frac{dt}{ds}= \frac{\partial_x F(s)}{F(s)}\Delta x, \quad 
\Delta x(2\pi) = \Delta x(0) \exp\Big(  \int_0^{2\pi}  \frac{\partial_x F(s)}{F(s)} ds  \Big).
\end{equation*}
Thus, the condition similar to ($\mathcal{A}_5$)
\[
\int_0^{2\pi}\frac{\partial_x F(s)}{F(s)}ds < 0,
\]
implies 
\begin{gather*}
|\Delta x(2\pi)| =  |\Delta x(0)| \underbrace{\exp\Big(  \int_0^{2\pi}  \frac{\partial_x F(s)}{F(s)} ds  \Big)}_{\leq 1}, \\
|\Delta x(2n\pi)| =  |\Delta x(0)| \exp\Big( n \int_0^{2\pi}  \frac{\partial_x F(s)}{F(s)} ds  \Big) \quad \to \quad 0, \quad \mbox{as $n \to \infty$}. 
\end{gather*}

Therefore, we can see that $\Delta x$ tends to zero, as the argument tends to infinity. This is the reason why the condition $(\mathcal{A}_5)$ is also called the synchronization hypothesis.

\end{remark}

\subsection{Explicit examples for $\boldsymbol{\mathcal{F}^\nu(\kappa, \gamma)}$} \label{sec:2.3} In this subsection, we provide several simple examples in $\mathcal{F}^\nu(\kappa, \gamma)$ satisfying $(\mathcal{A}_1)$ - $(\mathcal{A}_5)$ from the collective dynamics.
\subsubsection{Kinetic Winfree flow} \label{sec:2.3.1}
Consider the nonlocal velocity field $F$:
\begin{equation}\label{winfree}
F[\mu](x, \omega) := \omega + \kappa \int_{\mathbb{R}^2} R(x) P(y) \mu(dy,d\omega).
\end{equation}
In literature \cite{A-S},  we often use the following explicit form:
\begin{equation*} \label{C-1}
R(x) = -\sin x \quad \mbox{and} \quad P(y) = 1 +\cos y. 
\end{equation*}
For $\mu = \delta_x \otimes \nu$, it is easy to see that 
\begin{align*}
F(\delta_x \otimes \nu,x,\omega_c) = \omega_c +  \kappa R(x) P(x), \quad  \partial_x F(\delta_x \otimes \nu,x,\omega_c) &=  \kappa R^{\prime}(x) P(x).
\end{align*}
This yields
\begin{align*}
\begin{aligned}
\frac{\partial_x F(\delta_x \otimes \nu,x,\omega_c)}{F(\delta_x \otimes \nu,x,\omega_c)} &= \frac{\kappa P(x)R'(x)}{\omega_c + \kappa P(x)R(x)}= \frac{\kappa(P'(x)R(x)+P(x)R'(x))}{\omega_c + \kappa P(x)R(x)} - \frac{\kappa P'(x)R(x)}{\omega_c + \kappa P(x)R(x)} \\
&=  \frac{(\omega_c + \kappa P(x)R(x))^{\prime}}{\omega_c + \kappa P(x)R(x)}  - \frac{\kappa P'(x)R(x)}{\omega_c + \kappa P(x)R(x)}.
\end{aligned}
\end{align*}

Thus, it follows from $(\mathcal{A}_5)$ that 
\[
Sync := \int_0^{2\pi} \frac{\partial_x F(\delta_x \otimes \nu ,x,\omega_c)}{F(\delta_x \otimes \nu ,x,\omega_c)} \, dx = -\kappa \int_0^{2\pi} \frac{ \sin^2 x}{\omega_c - \kappa (1+\cos x)\sin x} \, dx.
\]
Choosing as in \cite{O-K-T} a positive mean frequency $\omega_c >0$ and a coupling strength $\kappa$ small enough $\kappa \in (0,\frac{1}{4}\omega_c)$, $(\mathcal{A}_4)$ is satisfied since for every $x \in \mathbb{R}$
\begin{equation}\label{R2.1.1}
F[\delta_x \otimes \nu](x,\omega_c) = \omega_c + \kappa R(x)P(x) = \omega_c - \kappa \sin x (1 + \cos x) \geq A := \frac{1}{2} \omega_c >0.
\end{equation}
Furthermore $(\mathcal{A}_5)$ is also satisfied since
\begin{gather*}
Sync \leq  - \kappa B \ \ \text{with} \ \ B := \frac{1}{3}\omega_c >0.
\end{gather*}

\subsubsection{Kinetic Kuramoto flow} \label{sec:3.2.2} 
Consider the nonlinear velocity field $F$:
\begin{equation}\label{kuramoto}
F[\mu](x, \omega) := \omega - \kappa \int_{\mathbb{R}^2}\sin(x-y)  g(\omega) \mu(dy, d\omega),
\end{equation}
with $g\geq0$ and $\int g(\omega) \, \nu(d\omega) = 1$. For $\mu = \delta_x \otimes \nu$, we have
\begin{equation} \label{C-2}
F(\delta_x \otimes \nu,x,\omega_c) = \omega_c, \quad  \partial_x F(\delta_x \otimes \nu,x,\omega_c) =  -\kappa.
\end{equation}
This implies
\begin{align*}
\frac{\partial_x F(\delta_x \otimes \nu,x,\omega_c)}{F(\delta_x \otimes \nu,x,\omega_c)} &= -\frac{\kappa}{\omega_c} <0.
\end{align*}
Hence, we have
\begin{equation} \label{C-3}
\int_0^{2\pi} \frac{\partial_x F(\delta_x \otimes \nu,x,\omega_c)}{F(\delta_x \otimes \nu,x,\omega_c)} \, dx = -\kappa \frac{2\pi}{\omega_c}.
\end{equation}
It follows $\eqref{C-2}_1$ and \eqref{C-3} that the choices
\[ A =  \omega_c, \quad B= \frac{2\pi}{\omega_c}    \]
satisfy $(\mathcal{A}_4)$ and $(\mathcal{A}_5)$. 
%

\subsection{Collection of parameters and constants}
For readers' convenience, we collect and classify constants and parameters appearing throughout the paper.

\vspace{.2cm}

\begin{enumerate}
\item We fix positive \textit{constants} $A, B, I$ and $M$ appearing in the sufficient framework $(\mathcal{A})$. In practice, they are determined if the flux $F[\mu]$ is given specifically.
\item The positive numbers $\kappa, \gamma$ and $D$ are \textit{parameters} that will be chosen accordingly.
\item In the following, we define a set of constants depending on $A,B,I$ and $M$ as follows:
\begin{align}
\begin{aligned} \label{const}
& C_1(\kappa,\gamma,D) := \kappa( M+I)D^2 +2I\gamma, \\
& 
C_2(\kappa,\gamma,D) := \frac{C_1(\kappa,\gamma,D)}{A} +\frac{\left(((\kappa+1)I+ \kappa M)D+ I \gamma \right)\left(\kappa ID + C_1(\kappa,\gamma,D) \right)}{A\left(A - ((\kappa+1)I+ \kappa M)D - I \gamma\right)}, \\
&E_1(\kappa,\gamma,D):=\kappa\left(D\left(\frac{3I}{2}+M\right)+I\gamma\right)\\
&E_2(\kappa,\gamma,D)=\frac{\kappa}{A-((\kappa+1)I+ \kappa M)D-I\gamma}\left(D\left(\frac{3I}{2}+M\right)+I\gamma+\frac{I\big(\kappa(I+M)D+I\gamma\big)}{A}\right).
\end{aligned}
\end{align}
\end{enumerate}

\subsection{Preparatory lemmas}
For a given set of time-dependent measures $\{ \mu_t \}_{t\ge0}\subset\mathcal P_2(\mathbb R^2)$, we set the first spatial average:
\begin{equation} \label{D-0}
x_c[\mu_t] := \int_{\mathbb{R}^2} x \mu_t(dx, d\omega). 
\end{equation}
We write $x_{c}(t)$ instead of $x_c[\mu_t]$, if there is no need to clarify the measure $\mu_t$. The quantity $x_c(t)$ measures at time $t$ the averaged location of all the oscillators independently of their intrinsic frequency.

\begin{lemma}\label{L4.1}
Let $\mu  \in {\mathcal C}([0, \infty); \mathcal{P}_2(\mathbb{R}^2))$ be a global measure-valued solution to \eqref{A-1}. Then, the spatial average $x_c$ defined in \eqref{D-0} satisfies 
\begin{gather}\label{Equation:L4.1_01}
\frac{d}{dt}x_c(t) = \int_{\mathbb{R}^2}F[\mu_t](x,\omega) \mu_t(dx,d\omega), \quad  \forall\, t\ge0. 
\end{gather}
\end{lemma}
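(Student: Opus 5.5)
The identity \eqref{Equation:L4.1_01} is the weak formulation of Definition \ref{D1.1} tested against $\varphi(x,\omega)=x$. Since $x$ is neither compactly supported nor bounded, I will test against a truncation of it and pass to the limit.

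\emph{Truncated test functions.} Fix a smooth even cutoff $\chi\in\mathcal C_c^\infty(\mathbb R)$ with $0\le\chi\le1$, $\chi\equiv1$ on $[-1,1]$ and $\chi\equiv0$ outside $[-2,2]$. For $R\ge1$ set
\[
\varphi_R(x,\omega):=x\,\chi(x/R)\,\chi(\omega/R).
\]
This function is time independent and belongs to $\mathcal C^1_0$. Its derivative satisfies
\[
\partial_x\varphi_R=\chi(x/R)\chi(\omega/R)+\tfrac{x}{R}\chi'(x/R)\chi(\omega/R),
\]
so $|\partial_x\varphi_R|\le 1+2\|\chi'\|_\infty$, and $\partial_x\varphi_R\to1$ pointwise as $R\to\infty$. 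Also $|\varphi_R|\le|x|$ and $\varphi_R\to x$ pointwise. Inserting $\varphi_R$ into Definition \ref{D1.1}(2) gives
\[
\langle\mu_t,\varphi_R\rangle=\langle\mu_0,\varphi_R\rangle+\int_0^t\langle\mu_s,F[\mu_s]\,\partial_x\varphi_R\rangle\,ds .
\]

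\emph{Passage to the limit $R\to\infty$.}
\begin{itemize}
\item On the left and in the initial term, dominated convergence applies with dominating function $|x|$. This function is integrable because $\mu_t\in\mathcal P_2$. The limits are $x_c(t)$ and $x_c(0)$.
\item In the flux term I need a dominating function for $|F[\mu_s](x,\omega)|$ that is integrable uniformly in $s\in[0,t]$.
\end{itemize}

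\emph{Bound on the flux; this is the main obstacle.} From $(\mathcal A_2)$, $|\partial_xF|\le\kappa I$ and $|\partial_\omega F|\le I$, so
\[
|F[\mu_s](x,\omega)|\le |F[\mu_s](0,0)|+\kappa I|x|+I|\omega| .
\]
To control $|F[\mu_s](0,0)|$ uniformly in $s$, I compare $F[\mu_s]$ with $F[\mu_0]$ via $(\mathcal A_1)$ with $\alpha=0$. This gives a bound $\kappa M\,W_1(\mu_s,\mu_0)$, provided the point $(0,0)$ is covered by the supremum there. If $0\notin\Omega$, I instead evaluate at $(0,\omega_0)$ for some $\omega_0\in\Omega$ and use the $\omega$-derivative bound to move to $(0,0)$.

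The map $s\mapsto W_1(\mu_s,\mu_0)$ is continuous, since $\mu\in\mathcal C([0,T);\mathcal P_2)$, and is therefore bounded on $[0,t]$. The second moments of $\mu_s$ are likewise bounded on $[0,t]$, because the $W_2$-continuity makes them continuous in $s$. Hence $|F[\mu_s]|\le C_t(1+|x|+|\omega|)$. This majorant is integrable against $\mu_s$ uniformly in $s\le t$, so dominated convergence applies to the inner integral and then to the time integral. The result is
\[
x_c(t)=x_c(0)+\int_0^t\int_{\mathbb R^2}F[\mu_s]\,d\mu_s\,ds .
\]

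\emph{Differentiation.} Finally, $s\mapsto\int F[\mu_s]\,d\mu_s$ is continuous, which gives differentiability of $x_c$ and \eqref{Equation:L4.1_01} for all $t\ge0$. To see the continuity, write
\[
\int F[\mu_s]\,d\mu_s-\int F[\mu_r]\,d\mu_r=\int\big(F[\mu_s]-F[\mu_r]\big)\,d\mu_s+\Big(\int F[\mu_r]\,d\mu_s-\int F[\mu_r]\,d\mu_r\Big).
\]
The first term is bounded by $\kappa M\,W_1(\mu_s,\mu_r)$, using $(\mathcal A_1)$ on the support $\mathbb R\times\Omega$. The second tends to $0$ as $s\to r$ because $F[\mu_r]$ is continuous with linear growth and $\mu_s\to\mu_r$ in $W_2$; convergence in $W_2$ implies convergence of integrals of such functions.
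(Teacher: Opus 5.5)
Your proposal is correct and follows essentially the paper's route: you truncate the test function $x$, pass to the limit by dominated convergence to get $x_c(t)=x_c(0)+\int_0^t\int F[\mu_s]\,d\mu_s\,ds$, and then show $s\mapsto\int F[\mu_s]\,d\mu_s$ is continuous by splitting it into an $(\mathcal A_1)$ term and a Wasserstein-convergence term (where the paper uses Kantorovich--Rubinstein with the Lipschitz constant $(\kappa+1)I$ from $(\mathcal A_2)$, you use $W_2$-convergence against linear-growth functions). Your uniform linear-growth bound on $F[\mu_s]$ over $[0,t]$ supplies the dominating function that the paper's Step A relies on but never writes out.
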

\begin{proof} We split the proof into three steps.

\vspace{.2cm}

\noindent $\bullet$~Step A: We claim that 
\begin{equation}\label{D-0-1}
x_c(t) = x_c(0) + \int_0^t \int_{\mathbb R^2}F[\mu_s]\mu_s(dx,d\omega)ds.
\end{equation}
For this, we choose a time-independent test function $\varphi \in \mathcal C^1_c(\mathbb{R}^2)$. Then, it follows from the weak formulation in Definition \ref{D1.1} and  $\partial_t \varphi = 0$ that 
\begin{align}\label{D-0-2}
\begin{aligned}
\int_{\mathbb{R}^2}\varphi \mu_t(dx,d\omega) = \int_{\mathbb{R}^2}\varphi \mu_0(dx,d\omega) + \int_0^t \int_{\mathbb{R}^2}\bigg(\partial_x \varphi F[\mu_s](x,\omega) \, \mu_s(dx,d\omega)\bigg)ds.
\end{aligned}
\end{align}
Now, we set
\[
\psi_N(x,\omega) := \psi(x) \mathcal{X}_N(x,\omega), \quad \text{for each} ~~ N\in \mathbb{N},
\]
where $\psi$ is any smooth sublinear function satisfying the growth relation:
\[ |\psi(x)| \leq ax +b \quad \mbox{for some $a>0$ and $b \in \mathbb{R}$}, \]
and $\mathcal{X}_N = \mathcal{X}_N(x,\omega)$ is a smooth cut-off function such that 
\begin{equation} \label{D-0-3}
\mathrm{supp}(\mathcal{X}_N) \subset \left [-N-\frac{1}{2},N+\frac{1}{2} \right ] \times \mathbb{R} \quad \mbox{and} \quad \mathcal{X}_N = 1 \quad \mbox{if $x \in [-N,N]$}. 
\end{equation}
Note that  the integral $\displaystyle \int_{\mathbb{R}^2} \psi_N \mu_t(dx, d\omega)$ is well-defined due to the fact that 
 $\mu \in {\mathcal C}([0, \infty);\mathcal{P}_2(\mathbb{R}^2))$.
Since $\psi_N$ is a test function, it follows from \eqref{D-0-2} that
\begin{align*}
\int_{\mathbb{R}^{2}} \psi_N \mu_t(dx, d\omega) = \int_{\mathbb{R}^{2}} \psi_N \mu_0(dx, d\omega) +  \int_0^t \int_{\mathbb{R}^{2}}\partial_x \psi_N F[\mu_s](x,\omega)\mu_s(dx,d\omega)ds.
\end{align*}
Letting $N \rightarrow \infty$,  we use \eqref{D-0-3} to see that for each $(x,\omega) \in \mathbb R^2$, 
\begin{align}
\begin{aligned} \label{D-0-4}
& \psi_N(x, \omega) \to \psi(x,\omega) \quad \mbox{and} \\
& \partial_x \psi_N(x,\omega) = \partial_x \psi \mathcal{X}_N(x,\omega) + \psi \partial_x \mathcal{X}_N(x,\omega) \to \partial_x \psi(x, \omega).
\end{aligned}
\end{align}
Therefore, we use \eqref{D-0-4} and the Lebesgue dominated convergence theorem to obtain the relation:
\begin{align} \label{D-0-5}
\int_{\mathbb{R}^2} \psi\mu_t(dx, d\omega) = \int_{\mathbb{R}^2} \psi \mu_0(dx, d\omega) + \int_0^t \int_{\mathbb{R}^2}\partial_x \psi F[\mu_s](x,\omega)\mu_s(dx,d\omega)ds.
\end{align}
Now, we set $\psi = x$ and use \eqref{D-0-5} to derive the claim \eqref{D-0-1}. 

\vspace{0.2cm}

\noindent $\bullet$~Step B: ~Here, we show that $x_c(t)$ is differentiable in time and satisfies \eqref{Equation:L4.1_01}. It is sufficient to prove that $s \mapsto \int_{\mathbb{R}^2}  F[\mu_s](x,\omega) \, \mu_s(dx,d\omega)$ is continuous with respect to the Wasserstein distance. Indeed
\begin{align*}
\Big| \int_{\mathbb{R}^2} F[\mu_s](x,\omega) \, \mu_s(dx,d\omega) &- \int_{\mathbb{R}^2}  F[\mu_t](x,\omega) \, \mu_s (dx,d\omega) \Big| \\
&\leq \Big| \int_{\mathbb{R}^2}  F[\mu_s](x,\omega) \, \mu_s(dx,d\omega) - \int_{\mathbb{R}^2}  F[\mu_t](x,\omega) \, \mu_s (dx,d\omega) \Big| \\
&\quad+ \Big| \int_{\mathbb{R}^2}  F[\mu_t](x,\omega) \, \mu_s(dx,d\omega) - \int_{\mathbb{R}^2}  F[\mu_t](x,\omega) \, \mu_t (dx,d\omega) \Big| \\
&\leq (\kappa M + (\kappa+1)I) W_1(\mu_s,\mu_t).
\end{align*}
The first bound $\kappa M$ comes from the Lipschitz regularity of $s \mapsto \mu_s$ given in  ($\mathcal{A}_1$), the second bound $(\kappa+1)I$ comes from the Lipschitz regularity of $F[\mu_t]$ given in ($\mathcal{A}_2$).

\end{proof}
Next, we provide the estimates for $x_c(t)$ via approximate dynamics. We consider a fictitious distribution $\tilde \mu$ in $\mathbb{R}^2$ as if all the oscillators were located at $x_c$.
\begin{lemma}\label{L4.2}
Let $\mu \in {\mathcal C}([0,\infty); \mathcal{P}_2(\mathbb{R}^2))$ be a global measure-valued solution to \eqref{A-1} and let  $\{ \tilde{\mu}_t \}_{t\ge0}$ be a time-dependent family of measures defined by 
\begin{gather} 
\tilde{\mu}_t : = \delta_{x_c(t)} \otimes \nu,\quad\forall~t\ge0. \label{Equation:LeaderDistribution}
\end{gather}
Then, we have 
\begin{equation*} \label{D-1}
\bigg| \frac{d}{dt}x_c(t) - F[\tilde{\mu}_t](x_c(t),\omega_c)\bigg| \leq ((\kappa+1)I+\kappa M)W_{1}(\mu_t, \tilde{\mu_t})+  I\gamma.
\end{equation*}
\end{lemma}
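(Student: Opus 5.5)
By Lemma \ref{L4.1}, $\frac{d}{dt}x_c(t)=\int_{\mathbb R^2}F[\mu_t](x,\omega)\,\mu_t(dx,d\omega)$. The plan is to pass from this average to the value $F[\tilde\mu_t](x_c(t),\omega_c)$ through three intermediate quantities, with a triangle inequality at each step:
\begin{align*}
\int F[\mu_t]\,d\mu_t \;\to\; \int F[\tilde\mu_t]\,d\mu_t \;\to\; \int F[\tilde\mu_t]\,d\tilde\mu_t \;\to\; F[\tilde\mu_t](x_c(t),\omega_c).
\end{align*}
Both $\mu_t$ and $\tilde\mu_t$ lie in $\mathcal P_2^\nu(\mathbb R^2)$, since $\tilde\mu_t=\delta_{x_c(t)}\otimes\nu$ has $\omega$-marginal $\nu$. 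This membership is what lets us apply $(\mathcal A_1)$ and $(\mathcal A_2)$. Moreover, both measures are supported in $\mathbb R\times\Omega$, which is where the sup in $(\mathcal A_1)$ is taken.

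\textbf{Step 1.} The first difference is bounded by $\sup_{\mathbb R\times\Omega}|F[\mu_t]-F[\tilde\mu_t]|\le \kappa M W_1(\mu_t,\tilde\mu_t)$, using $(\mathcal A_1)$ with $\alpha=0$.

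\textbf{Step 2.} For the second difference we use the Kantorovich--Rubinstein duality. The function $(x,\omega)\mapsto F[\tilde\mu_t](x,\omega)$ has $|\partial_xF|\le\kappa I$ and $|\partial_\omega F|\le I$ by $(\mathcal A_2)$. Hence it is Lipschitz with constant at most $(\kappa+1)I$ for the $\ell^1$ (or Euclidean) metric on $\mathbb R^2$, since $\kappa I|dx|+I|d\omega|\le(\kappa+1)I\,|(dx,d\omega)|$. This gives the bound $(\kappa+1)I\,W_1(\mu_t,\tilde\mu_t)$. The point to check is that the bound on $\partial_\omega F$ is used over the convex hull of $\Omega$, which is fine because $(\mathcal A_2)$ states a global sup norm.

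\textbf{Step 3.} It remains to compare $\int F[\tilde\mu_t](x_c(t),\omega)\,\nu(d\omega)$ with $F[\tilde\mu_t](x_c(t),\omega_c)$. Since $\omega_c$ is the mean of $\nu$, it lies in the convex hull of $\Omega$, so $|\omega-\omega_c|\le\gamma$ for every $\omega\in\Omega$. The mean value theorem in $\omega$ together with $|\partial_\omega F|\le I$ then bounds the integrand difference by $I\gamma$, and integrating against $\nu$ yields the term $I\gamma$.

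Summing the three steps gives the claimed inequality $((\kappa+1)I+\kappa M)W_1(\mu_t,\tilde\mu_t)+I\gamma$. The only mildly delicate point is Step 2: we need the joint Lipschitz constant of $F[\tilde\mu_t]$ in $(x,\omega)$ to be controlled by $(\kappa+1)I$ in the metric defining $W_1$. With the Euclidean metric the constant is $\sqrt{\kappa^2+1}\,I\le(\kappa+1)I$, so the bound holds in either convention.
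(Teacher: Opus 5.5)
Your proposal is correct and is essentially the paper's proof. It uses the same three-term telescoping after Lemma \ref{L4.1}: the field-swap term is bounded by $(\mathcal A_1)$, the measure-swap term by Kantorovich--Rubinstein duality with Lipschitz constant $(\kappa+1)I$ from $(\mathcal A_2)$, and the frequency term by $I\gamma$. The only difference is cosmetic: the paper passes through $\int F[\mu_t]\,d\tilde\mu_t$, changing the measure first, whereas you pass through $\int F[\tilde\mu_t]\,d\mu_t$, changing the field first, and this leaves the constants unchanged.
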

\begin{proof}
By Lemma \ref{L4.1}, it amounts to compare two quantities
\[
\int_{\mathbb{R}^2}F[\mu_t] \mu_t(dx,d\omega) \quad \text{and}\quad  F[\tilde{\mu}_t](x_c(t),\omega_c).
\]
We see that
\begin{align}
\begin{aligned} \label{D-2}
&\int_{\mathbb{R}^2}F[\mu_t](x,\omega)\mu_t(dx,d\omega)-  F[\tilde{\mu}_t](x_c(t),\omega_c)\\
& \hspace{0.5cm} = \int_{\mathbb{R}^2} F[\mu_t](x,\omega)\mu_t(dx,d\omega)-\int_{\mathbb{R}^2}F[\tilde{\mu}_t](x,\omega)\tilde{\mu}_t(dx,d\omega)\\
& \hspace{1cm}+\int_{\mathbb{R}^2}F[\tilde{\mu}_t](x,\omega)\tilde{\mu}_t(dx,d\omega)-  F[\tilde{\mu}_t](x_c(t),\omega_c)\\
& \hspace{0.5cm} = \int_{\mathbb{R}^2}F[\mu_t](x,\omega)(\mu_t -\tilde{\mu}_t)(dx,d\omega) + \int_{\mathbb{R}^2}\bigg(F[\mu_t](x,\omega) - F[\tilde{\mu}_t](x,\omega)\bigg)\tilde{\mu}_t(dx,d\omega)\\
&\hspace{1cm}+\int_{\mathbb{R}^2}F[\tilde{\mu}_t](x,\omega)\tilde{\mu}_t(dx,d\omega)-  F[\tilde{\mu}_t](x_c(t),\omega_c)\\
& \hspace{0.5cm} =: \mathcal{I}_{11} + \mathcal{I}_{12} + \mathcal{I}_{13}.
\end{aligned}
\end{align}
In the sequel, we estimate the above terms ${\mathcal I}_{1i}$ one by one. 

\vspace{.2cm}

\noindent $\bullet$~Case A.1 (Estimate of ${\mathcal I}_{11}$):  By $(\mathcal{A}_2)$ and Kantorovich-Rubinstein duality formula, we have
\begin{align}\label{T-1}
    |\mathcal I_{11}|\le\text{Lip}(F[\mu])W_1(\mu_t,\tilde\mu_t)\le(\kappa+1)IW_1(\mu_t,\tilde\mu_t).
\end{align}
\noindent $\bullet$~Case A.2 (Estimate of ${\mathcal I}_{12}$):  We use $(\mathcal{A}_1)$ to get 
\[
\big|F[\mu_t](x, \omega) - F[\tilde{\mu}_t](x, \omega)\big| \leq \kappa MW_{1}(\mu_t, \tilde{\mu_t}).
\]
This yields
\begin{equation}\label{T-2}
\mathcal{I}_{12} \leq \kappa MW_{1}(\mu_t, \tilde{\mu_t}).
\end{equation}

\noindent $\bullet$~Case A.3 (Estimate of ${\mathcal I}_{13}$):~We use the Lipschitz estimate of $F[\tilde\mu_t]$ with respect to $\omega$ given in  $(\mathcal{A}_2)$ to obtain 
\begin{align}\label{T-3}
\mathcal{I}_{13} &\leq \Big| \int_{\Omega} F[\tilde \mu_t](x_c(t),\omega) -F[\tilde\mu_t](x_c(t), \omega_c) \, \nu(d\omega) \Big|, \notag \\
&\leq \|\partial_\omega F\|_\infty \cdot \gamma \leq I\gamma.
\end{align}

In \eqref{D-2}, we collect all the estimates \eqref{T-1}, \eqref{T-2} and \eqref{T-3} to obtain the desired estimate:
\begin{equation*}
\int_{\mathbb{R}^2}F(\mu_t,x,\omega)\mu_t(dx,d\omega)-  F(\tilde{\mu}_t,x_c(t),\omega_c) \leq ((\kappa+1)I+\kappa M)W_{1}(\mu_t, \tilde{\mu_t}) +I \gamma.
\end{equation*}
\end{proof}
\subsection{Description of main results} \label{sec:2.6} In this subsection, we state our main result whose proof will be provided  later (see Section \ref{sec:5}). First, we recall below the disintegration theorem.

\begin{theorem}[Disintegration theorem]\cite{A-G-S}\label{thm_dis}
    Let $X,Y$ be Radon separable metric spaces, $\mu\in\mathcal P(X)$, let $\pi:X\to Y$ be a Borel measurable map and let $\nu=\pi_\sharp\mu\in\mathcal P(Y)$. Then there exists a $\nu$-a.e. uniquely determined family of probability measures $\{\mu(\cdot,y)\}_{y\in Y}\subset\mathcal P(X)$, called conditional measures, satisfying the following three properties:
    \begin{enumerate}
        \item The map $y\mapsto \mu(\cdot, y)$ is measurable, i.e., $y\mapsto\mu(A,y)$ is measurable for every Borel measurable set $A$ of $X$.
        \item For $\nu$-almost all $y\in Y$,
        \begin{align*}
            \mu(X\setminus\pi^{-1}(y),y)=0.
        \end{align*}
        \item For every Borel map $f:X\to[0,\infty]$,
        \begin{align*}
        \int_Xf(x) \, \mu(dx)=\int_Y\left(\int_{\pi^{-1}(y)}f(x) \, \mu(dx,y)\right) \, \nu(dy).
    \end{align*}
    \end{enumerate}
\end{theorem}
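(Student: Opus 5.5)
The statement just above is the classical disintegration theorem, quoted from \cite{A-G-S}. The plan is the standard one: construct the conditional measures on a countable family of test functions via Radon--Nikodym derivatives, and then extend them to genuine probability measures by a compactness argument. Since $X$ is Radon and separable, one may reduce to the case where $X$ is a compact metric space. Inner regularity of $\mu$ gives an increasing sequence of compact sets $K_n$ with $\mu(X\setminus\bigcup_n K_n)=0$. One then works on each $K_n$, or after a Borel embedding of $X$ into a compact metric space such as the Hilbert cube, so that $C(X)$ becomes separable.

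\textbf{Step 1: conditional expectations on a countable family.} Fix a countable set $\mathcal D\subset C(X)$ that is dense and is a vector space over $\mathbb Q$ containing $1$. For each $f\in\mathcal D$, the signed measure $B\mapsto\int_{\pi^{-1}(B)}f\,d\mu$ is absolutely continuous with respect to $\nu=\pi_\sharp\mu$. The Radon--Nikodym theorem then gives a Borel function $L_f(y)$ with
\[
\int_{\pi^{-1}(B)} f\,d\mu=\int_B L_f\,d\nu \quad \text{for all Borel } B .
\]
Since $\mathcal D$ is countable, there is a single $\nu$-null set $N$ outside of which the following hold:
\begin{itemize}
\item $f\mapsto L_f(y)$ is $\mathbb Q$-linear;
\item $L_f(y)\ge 0$ whenever $f\ge 0$;
\item $L_1(y)=1$;
\item $|L_f(y)|\le\|f\|_\infty$.
\end{itemize}

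\textbf{Step 2: extension to measures.} For $y\notin N$, extend $L_\cdot(y)$ by density to a positive, normalized linear functional on $C(X)$. The Riesz representation theorem, on the compact space, yields a probability measure $\mu(\cdot,y)$. Measurability of $y\mapsto\mu(A,y)$ follows by a monotone class argument: it holds first for continuous $f$ as limits of the $L_f$, then for indicators of open sets via increasing limits, and finally for all Borel sets by Dynkin's $\pi$--$\lambda$ theorem. The same monotone class argument extends the disintegration identity (3) from $f\in\mathcal D$, where it holds on sets $B$ through $\int_{\pi^{-1}(B)}f\,d\mu$, to all nonnegative Borel $f$, using monotone convergence. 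For $y\in N$, define $\mu(\cdot,y)$ to be an arbitrary fixed probability measure.

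\textbf{Step 3: concentration on fibres.} This is the main obstacle, namely property (2): each $\mu(\cdot,y)$ must live on $\pi^{-1}(y)$ even though $\pi$ is only Borel. Take a countable base $\{U_k\}$ of $Y$ and apply (3) with $f=\mathbf 1_{\pi^{-1}(U_k)}\mathbf 1_{\pi^{-1}(B)}$. This shows that $\mu(\pi^{-1}(U_k),y)=\mathbf 1_{U_k}(y)$ for $\nu$-a.e.\ $y$. Discarding one more countable union of null sets, $\mu(\cdot,y)$ gives full mass to $\bigcap_{k:\,y\in U_k}\pi^{-1}(U_k)=\pi^{-1}(y)$, where we use that $Y$ is Hausdorff.

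\textbf{Step 4: uniqueness.} If $\{\mu'(\cdot,y)\}$ is another such family, then (3) gives $\int_B\mu(A,y)\,\nu(dy)=\int_B\mu'(A,y)\,\nu(dy)$ for every Borel $B$. Hence $\mu(A,y)=\mu'(A,y)$ for $\nu$-a.e.\ $y$, for each $A$ in a countable generating $\pi$-system, and therefore for all Borel $A$ off a single null set.

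The delicate points are the reduction to compactness, which needs the Radon property, and Step 3. Everything else is routine measure theory.
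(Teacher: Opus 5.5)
The paper gives no proof of this statement. It is imported from \cite{A-G-S} as a black box, used only to identify $\mathcal P_2^\nu(\mathbb R^2)$ with the kernel space $\mathcal X_\nu$, so there is no argument in the paper to compare your route with. Your proposal is the standard construction: Radon--Nikodym derivatives on a countable dense $\mathbb Q$-vector space of test functions, Riesz representation on a compact space, a monotone class extension, and concentration on fibres through a countable base of $Y$. It is correct as a sketch, but two points should be tightened.

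\emph{Reduction to compactness.} Commit to a homeomorphic embedding $\iota:X\to Q$ into the Hilbert cube. Drop the vague ``Borel embedding'', and drop the ``work on each $K_n$'' alternative, which would need a gluing step you do not give. Run Steps 1--2 on $Q$. Then observe that $S:=\iota(\bigcup_n K_n)$ is $\sigma$-compact, hence Borel, with $\iota_\sharp\mu(Q\setminus S)=0$. The disintegration identity on $Q$ applied to $\mathbf 1_{Q\setminus S}$ shows that for $\nu$-a.e.\ $y$ the conditional measure is carried by $S$. Only then can you pull it back to a Borel probability on $X$ via $A\mapsto\mu_Q(\iota(A\cap\bigcup_n K_n),y)$. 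This is precisely where the Radon hypothesis is used.

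\emph{Localized identity.} Steps 3 and 4 use the identity $\int_{\pi^{-1}(B)}f\,d\mu=\int_B\big(\int f\,\mu(dx,y)\big)\,\nu(dy)$, not (3) itself. For your own family this comes out of the construction. For the competing family in the uniqueness step, you must derive it by applying (3) to $f\cdot(\mathbf 1_B\circ\pi)$ and then invoking (2) to replace the integral over $\pi^{-1}(y)$ by the integral over $X$.
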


The existence of a periodic solution in the space of probabilities in $\mathbb{R}^2$  will be obtained as a fixed point of some Poincar\'e map restricted to a closed convex set using Schauder fixed point theorem.  However, as noted in Section \ref{sec:1}, choosing an appropriate topology is a delicate problem. For a discrete density of frequencies $\frac{1}{N} \sum_{i=1}^N \delta_{\omega_i}$ and a discrete initial condition $\mu_0=\frac{1}{N}\sum_{i=1}^N \delta_{(X_i^0,\omega_i)}$, the solution of \eqref{A-1} is given by  $\mu_t = \frac{1}{N}\sum_{i=1} \delta_{(X_i(t),\omega_i)}$ where $X(t):=(X_i(t))_{i=1}^N$. In that case, the oscillators $X(t)$ are distinguishable, parametrized by the frequency index $i \in [N]:=\{1,\cdots,N\}$. The notion of convexity between two such probabilities $\mu_X =  \frac{1}{N}\sum_{i=1} \delta_{(X_i,\omega_i)}$ and $\mu_Y =  \frac{1}{N}\sum_{i=1} \delta_{(Y_i,\omega_i)}$ is easily defined by taking the barycenter of the corresponding points $(1-\alpha) \mu_X \oplus \alpha \mu_Y := \frac{1}{N} \sum_{i=1}^N \delta_{((1-\alpha)X_i+\alpha Y_i,\omega)}$. For a non discrete distribution $\nu$ we use another topology given by $L^2(\Omega,\nu;(\mathcal{P}_2^\nu(\mathbb{R}^2),W_2))$ that gives us an Hilbert-like structure where $W_2$ is the 2-Wasserstein distance. We will apply the following extension of Schauder fixed point theorem to establish the existence of a fixed point.

\begin{theorem}\label{Sch_thm}\cite{A-L-L, N-R} 
Let  $(X,d)$ be a $CAT(0)$ geodesic space and $K \subseteq X$ be a closed geodesically convex subset. If $T:K \to K$ is a continuous map such that $\overline{T(K)}$ is compact, then $T$ admits a fixed point in $K$.
\end{theorem}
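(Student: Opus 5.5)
The statement just given is Theorem \ref{Sch_thm}, a Schauder-type fixed point theorem for $CAT(0)$ spaces, cited from \cite{A-L-L, N-R}. I would prove it by reducing to the classical Schauder theorem for compact convex sets. The route is through a retraction onto a compact convex-like set, using the two features of $CAT(0)$ geometry that replace linear structure: unique geodesics depending continuously on their endpoints, and the existence of a $1$-Lipschitz metric projection onto closed convex sets.

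\textbf{Step 1: reduce to a compact convex target.} Let $C=\overline{T(K)}$, which is compact and contained in $K$ because $K$ is closed. I would form the closed geodesically convex hull $\widehat C=\overline{\mathrm{co}}(C)\subseteq K$. The key claim is that $\widehat C$ is compact. In a Hilbert space this is Mazur's theorem. In a $CAT(0)$ space, convex hulls are built by iterating geodesic joins, $C_0=C$ and $C_{n+1}=\bigcup_{x,y\in C_n}[x,y]$. Each $C_n$ is compact, since geodesics depend continuously on their endpoints by the $CAT(0)$ comparison inequality. However, the union over $n$ need not be totally bounded in general metric spaces. Here I would invoke the known result for $CAT(0)$ spaces (used in \cite{N-R}) that the closed convex hull of a compact set is compact, or else prove total boundedness by an $\varepsilon$-net argument using convexity of the metric: $d(\gamma(t),\gamma'(t))\le (1-t)d(x,x')+t\,d(y,y')$. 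I expect this compactness of the convex hull to be the main obstacle.

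\textbf{Step 2: show $\widehat C$ has the fixed point property.} Since $T(K)\subseteq C\subseteq\widehat C\subseteq K$, the restriction $T:\widehat C\to\widehat C$ is continuous on a compact, geodesically convex set. Such a set is contractible via geodesic contraction to a point, $H(x,t)=\gamma_{x_0,x}(t)$, which is continuous by the $CAT(0)$ property. More strongly, it is an absolute retract. To see this, embed $\widehat C$ into a Banach space as a compact set (Kuratowski embedding into $\ell^\infty$ of a countable dense subset). Then build a retraction from a neighbourhood, or from its closed convex hull in the Banach space, back onto $\widehat C$ using partitions of unity together with iterated geodesic barycenters. This is the standard Dugundji-type construction, with linear combinations replaced by geodesic convex combinations, and it works because $\widehat C$ is geodesically convex and geodesics vary continuously.

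\textbf{Step 3: conclude.} With a retraction $r:Q\to\widehat C$ from a compact convex subset $Q$ of a normed space, apply the classical Schauder/Brouwer theorem to $T\circ r:Q\to\widehat C\subseteq Q$. Its fixed point $q$ lies in $\widehat C$, so $r(q)=q$ and $T(q)=q$, which gives a fixed point of $T$ in $K$.

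An alternative route, which avoids the absolute-retract machinery, is to approximate $T$ by finite-dimensional maps. Take an $\varepsilon$-net $\{y_i\}$ of $C$ and use a partition of unity $\phi_i$ subordinate to the balls around the $y_i$. Define $T_\varepsilon(x)$ as an iterated geodesic barycenter of the $y_i$ with weights $\phi_i(Tx)$. This map has image in a finite-dimensional-parameterised compact contractible set, which gives an approximate fixed point $x_\varepsilon$ with $d(x_\varepsilon,Tx_\varepsilon)\lesssim\varepsilon$. Compactness of $\overline{T(K)}$ and continuity of $T$ then let us pass to the limit along a subsequence, yielding the fixed point.
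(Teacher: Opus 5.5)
Your main route breaks at Step 1: the compactness of $\widehat C=\overline{\mathrm{co}}(C)$ is not a result you can invoke. Whether the closed geodesically convex hull of a compact set in a CAT(0) space is compact is the CAT(0) analogue of Mazur's theorem. To my knowledge it is a well-known open question, open even for finite sets. The paper gives no argument to borrow here, since it only quotes Theorem \ref{Sch_thm} from the literature.

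Your fallback $\varepsilon$-net argument does not close the gap. Convexity of the metric does give, by induction over the iterated joins, $\mathrm{co}(C)\subseteq N_\varepsilon(\mathrm{co}(F))$ for a finite $\varepsilon$-net $F\subseteq C$. But this only reduces the question to finite sets, which is exactly the open case. For finite $F$ each join $F_n$ is compact, yet without a Carath\'eodory-type theorem the chain $F_0\subseteq F_1\subseteq\cdots$ need not stabilize, and nothing controls the total boundedness of $\bigcup_n F_n$. The iterated-geodesic image of a simplex is compact and lies in $\mathrm{co}(F)$, but it is not convex and need not exhaust $\mathrm{co}(F)$. So Steps 2--3, applied to $\widehat C$, rest on an unproved claim.

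Hull compactness is in fact unnecessary, and your alternative route is a correct proof once one step is repaired. A fixed point of $T_\varepsilon$ does not follow from its image lying in a compact contractible set, because contractible continua can fail the fixed point property (Kinoshita's example). Do this instead:
\begin{enumerate}
\item Let $\beta:\Delta^{m-1}\to K$ be the iterated geodesic combination of $y_1,\dots,y_m$, and let $\phi=(\phi_1,\dots,\phi_m):C\to\Delta^{m-1}$. Then $T_\varepsilon=\beta\circ\phi\circ T$.
\item Apply Brouwer to $\phi\circ T\circ\beta:\Delta^{m-1}\to\Delta^{m-1}$. If $\lambda^*$ is a fixed point, then $x_\varepsilon=\beta(\lambda^*)$ satisfies $T_\varepsilon x_\varepsilon=x_\varepsilon$.
\end{enumerate}
The map $\beta$ needs three properties:
\begin{enumerate}
\item It maps into $K$, by geodesic convexity.
\item It is continuous on the closed simplex, by $d(\gamma_{xy}(t),\gamma_{x'y'}(t))\le(1-t)d(x,x')+t\,d(y,y')$. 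At $\lambda_m=1$, where the normalization degenerates, use $d(\beta(\lambda),y_m)\le(1-\lambda_m)\max_i d(y_i,y_m)$.
\item It lies in every closed ball containing $\{y_i:\lambda_i>0\}$, because $d(\cdot,z)$ is convex along geodesics.
\end{enumerate}
The last property gives $d(x_\varepsilon,Tx_\varepsilon)=d(T_\varepsilon x_\varepsilon,Tx_\varepsilon)\le\varepsilon$. Your limiting argument then concludes, using neither completeness of $X$ nor compactness of any convex hull.

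Alternatively, keep Steps 2--3 and simply drop Step 1. The Dugundji-type construction with geodesic combinations works with target $K$ itself, since closed balls are convex. So the inclusion $C\hookrightarrow K$ extends to a continuous $g:Q\to K$. Here $Q$ is the closed convex hull of the embedded copy of $C$ in the Banach space, which is compact by Mazur's theorem there. A Schauder fixed point $q$ of $T\circ g:Q\to Q$ lies in $C$, so $g(q)=q=Tq$.
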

To state our main result, we set the diameter in $x$ of $\mu$ as follows:
\begin{align*}
    \diam_x(\Supp(\mu)):=\sup\{|x-x'|~:~(x,\omega),(x',\omega')\in\Supp(\mu)\}.
\end{align*}

Furthermore, we define the set $\mathcal C_\nu(\Delta,\tilde\Delta)$ as the collection of probability measures $\mu\in\mathcal P_2^{\nu}(\mathbb R^2)$ admitting conditional measures in $\omega$, Lipschitz for the 2-Wasserstein distance. More precisely  
\begin{align}\label{C_nu_def}
\begin{aligned}
\mathcal{C}_\nu(\Delta,\tilde \Delta) :=& \{ \mu\in\mathcal P_2^{\nu}(\mathbb R^2) : \\
    &\quad\quad(i)~\diam_x(\Supp(\mu))\le\Delta(x_c[\mu]),\\
    &\quad\quad(ii)~W_2(\mu(\cdot,\omega),\mu(\cdot,\omega'))\le\tilde\Delta(x_c[\mu])|\omega-\omega'|,\quad\forall~\omega,\omega'\in\Omega \big\},
\end{aligned}
\end{align}
where the functions $\Delta,\tilde\Delta:\mathbb R\to\mathbb R^+$ are positive $2\pi$-periodic functions, which will be specified in Proposition \ref{P4.1} and Proposition \ref{P5.1}, respectively. Notice that we restrict our study of \eqref{A-1} for initial conditions $\mu_0$ satisfying \eqref{C_nu_def} that is possessing Lipschitz conditional disintegrations with respect to the frequencies.
\begin{theorem}[Main result]\label{main}
Let $\nu \in\mathcal{P}_2(\mathbb{R})$, $F \in \mathcal F^\nu_{A,B,I,M}(\kappa,\gamma)$, and $\kappa,\gamma$ chosen as in  \eqref{param1},  \eqref{D_cond}, and \eqref{Equation:LipschitzRegularityAssumption}. Let $\Delta$ and $\tilde\Delta$ be the $2\pi$-periodic functions given in Propositions \ref{P4.1} and \ref{P5.1}. The following assertions hold.
    \begin{enumerate}
        \item The subset $\mathcal C_\nu(\Delta,\tilde\Delta)$ of $\mathcal P_2^\nu(\mathbb R^2)$ is positively invariant under the flow:  if $\mu_0\in\mathcal C_\nu(\Delta,\tilde\Delta)$ and $\mu\in\mathcal C([0,\infty);\mathcal P_2(\mathbb R^2))$ is a measure-valued solution of \eqref{A-1} with initial datum $\mu_0$ then $\mu \in \mathcal{C}_\nu$ for all $t\geq0$.
        \item There exists $\mu_*\in\mathcal C_\nu(\Delta,\tilde\Delta)$ such that the solution $\mu$ of \eqref{A-1} with  initial condition $\mu_0=\mu_*$ is periodic in the following sense:
        \begin{align*}
            \mu_{t+T_*}=\tau[2\pi]_\sharp\mu_t,\quad\forall~t\ge0,
        \end{align*}
        where the translation map $\tau[2\pi]:\mathbb R\to\mathbb R$ is defined by $\tau[2\pi](x):=x+2\pi$ and \[T_*:=\inf\{t\ge0:x_c[\mu_t]=x_c[\mu_0]+2\pi\}.\]
    \end{enumerate}
\end{theorem}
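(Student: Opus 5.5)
The plan is to follow the finite-dimensional strategy of \cite{O-K-T}: first establish positive invariance through characteristics and Gr\"onwall-type estimates, then build a Poincar\'e map on a section of the invariant set and apply Theorem \ref{Sch_thm}.

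\textbf{Step 1: characteristics.} I represent the solution by characteristics, $\mu_t=(X_t,\omega)_\sharp\mu_0$ with $\dot X_t(x,\omega)=F[\mu_t](X_t,\omega)$. Disintegrating in $\omega$ gives $\mu_t(\cdot,\omega)=X_t(\cdot,\omega)_\sharp\mu_0(\cdot,\omega)$.

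\textbf{Step 2: invariance of the diameter bound (i).} For two characteristics, the difference $\delta=X_t(x,\omega)-X_t(x',\omega')$ satisfies
\[
\dot\delta=\partial_xF[\mu_t](x_c,\omega_c)\,\delta+O\big(\kappa I\delta^2+\kappa M\,W_1(\mu_t,\tilde\mu_t)+I\gamma\big),
\]
by $(\mathcal A_1)$--$(\mathcal A_2)$ and Lemma \ref{L4.2}. Next I reparametrize time by $s=x_c(t)$. Lemma \ref{L4.2} and $(\mathcal A_4)$ give $\dot x_c\ge A-((\kappa+1)I+\kappa M)D-I\gamma>0$, so this change of variables is legitimate. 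In the variable $s$ the inequality becomes an affine inequality $d\delta/ds\le\alpha+\beta(s)\delta$ with
\[
\beta(s)\approx\frac{\partial_xF[\delta_s\otimes\nu](s,\omega_c)}{F[\delta_s\otimes\nu](s,\omega_c)},
\]
which is $2\pi$-periodic by $(\mathcal A_3)$. By $(\mathcal A_5)$ its mean is at most $-\kappa B$ up to errors of size $C_2$, and the forcing $\alpha$ is of order $C_1/A$. I then take $\Delta$ to be the periodic solution \eqref{B-14-2} of Lemma \ref{L2.2} with $\alpha,\beta$ enlarged slightly. A comparison argument for scalar ODEs then shows that $\diam_x\le\Delta(x_c)$ persists: if equality first occurs at some time, the derivative inequality contradicts the strict supersolution property. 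The bounds of Lemma \ref{L2.2}(2)--(3) ensure $\Delta\le D$, closing the bootstrap provided $C_1,C_2$ are small relative to $\kappa B$. This is exactly what the parameter conditions \eqref{param1}, \eqref{D_cond} should guarantee.

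\textbf{Step 3: invariance of the Lipschitz bound (ii).} I differentiate characteristics in $\omega$ along a coupling. Take the monotone (optimal on $\mathbb R$) coupling between $\mu_0(\cdot,\omega)$ and $\mu_0(\cdot,\omega')$ and push it forward by $(X_t(\cdot,\omega),X_t(\cdot,\omega'))$. This yields an admissible plan, so $W_2(\mu_t(\cdot,\omega),\mu_t(\cdot,\omega'))^2$ is bounded by the transported cost. Its derivative obeys the same type of affine inequality, now with forcing $I|\omega-\omega'|$ from $\partial_\omega F$ and the same contracting coefficient, up to errors $E_1,E_2$. Hence the periodic solution $\tilde\Delta$ from Lemma \ref{L2.2}, built with these constants, is again invariant under \eqref{Equation:LipschitzRegularityAssumption}. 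This proves item (1).

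\textbf{Step 4: geometry and compactness for the fixed point.} For item (2), I define the section $K=\{\mu\in\mathcal C_\nu(\Delta,\tilde\Delta): x_c[\mu]=0\}$ and the Poincar\'e map $P(\mu_0)=\tau[-2\pi]_\sharp\mu_{T(\mu_0)}$, where $T(\mu_0)$ is the first time $x_c$ increases by $2\pi$. This time is finite and continuous in $\mu_0$ because $\dot x_c\ge A/2$, and $P(K)\subset K$ by item (1) and $(\mathcal A_3)$.

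On $\mathcal P_2^\nu$ I use the distance
\[
d(\mu,\mu')^2=\int W_2(\mu(\cdot,\omega),\mu'(\cdot,\omega))^2\,\nu(d\omega).
\]
Via quantile functions this space is isometric to a closed convex subset of $L^2(\nu\otimes \mathrm{Leb}_{[0,1]})$, so it is CAT(0), with geodesics given by interpolating quantiles. In quantile variables, condition (i) is a convex constraint. Condition (ii), together with $x_c=0$, is also convex, since $W_2$ between fibers is an $L^2$ norm of differences of quantiles, which is convex. Closedness follows from lower semicontinuity.

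Compactness of $\overline{P(K)}$ is the expected main obstacle. I would get it by Arzel\`a--Ascoli-type reasoning in $L^2(\nu;W_2)$: the fibers are uniformly supported in an interval of length $D$ around $0$, hence tight and $W_2$-precompact, and they are equi-Lipschitz in $\omega$ by (ii). This yields precompactness of $\omega\mapsto\mu(\cdot,\omega)$ in $C(\Omega;W_2)$, and therefore in $d$. Continuity of $P$ in $d$ follows from a Gr\"onwall stability estimate for characteristics, using $(\mathcal A_1)$ and the bound $W_1\le d$.

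Theorem \ref{Sch_thm} then gives a fixed point $\mu_*$ of $P$. Uniqueness of solutions together with $(\mathcal A_3)$ gives $\mu_{t+T_*}=\tau[2\pi]_\sharp\mu_t$ for all $t\ge0$.
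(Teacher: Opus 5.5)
Your proposal is correct and follows essentially the paper's own route: characteristics reparametrized by $s=x_c(t)$ and comparison with the periodic solutions of Lemma \ref{L2.2} give invariance of the diameter bound and of the Wasserstein-Lipschitz bound (Propositions \ref{P4.1} and \ref{P5.1}, the latter via the pushed-forward optimal coupling). Theorem \ref{Sch_thm} is then applied to the Poincar\'e map on the section $x_c=0$ of the CAT(0) space $(\mathcal X_\nu,d_2)$, with convexity and compactness as in Lemma \ref{Lemma:CAT0CompactConvex}, and with continuity from the Gr\"onwall stability estimate, a return-time estimate and the time-Lipschitz bound of Lemma \ref{L5.3} (which you did not state explicitly). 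The only difference is bookkeeping: the paper keeps $\beta(s)=\partial_sF[\delta_s\otimes\nu](s,\omega_c)/F[\delta_s\otimes\nu](s,\omega_c)$ exact, absorbs all errors into the forcing $C_2$ using $|\delta|\le D$, and closes the bootstrap with an $\varepsilon$-argument instead of enlarging $\alpha,\beta$, so that $\Delta$ is exactly the function of Proposition \ref{P4.1}.
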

See Remarks \ref{Remark:Feasibility} and  \ref{Remark:FeasibilityBis} for the feasibility of the three assumptions \eqref{param1}, \eqref{D_cond}, and \eqref{Equation:LipschitzRegularityAssumption} for a set of parameters of the form $\{ (\kappa,\gamma) : \kappa \in (0,1), \ \gamma = \mathcal{O}(\kappa) \}$, $\Delta  = \mathcal{O}(1)$, and $\tilde\Delta = \mathcal{O}(\kappa^{-1})$.


\section{Topology of the metric space}\label{sec:3}
\setcounter{equation}{0}
In this section, we analyze the topology of a metric space of measures, and construct a geodesically convex subspace. This construction enables us to apply the Schauder fixed point theorem to obtain the fixed point of the Poincar\'e map. The correspondence between the involved spaces, which is crucial for applying our framework, will be clarified later in the proof of Theorem \ref{main} (see Subsection \ref{sec:5.2}).

\subsection{The metric space $\boldsymbol{(\mathcal{X}_\nu, d_2)}$}
We recall that a Markov kernel from a measurable space \((\Omega, \mathcal{F})\) to another measurable space \((\mathbb R, \mathcal{B}(\mathbb R))\) is a map
\[
p: \mathcal{B}(\mathbb R)\times\Omega \to [0,1]\]
such that for each \(\omega \in \Omega\), the map \(A \mapsto p(A,\omega)\) is a probability measure on \(\mathbb R\), and for each \(A \in \mathcal{B}(\mathbb R)\), the map \(\omega \mapsto p(A,\omega)\) is \(\mathcal{F}\)-measurable. A Markov kernels $p(dx,\omega)$ is sometimes written $p(dx|\omega)$ to indicate a probability conditional to the information $\omega$.

We now introduce a metric space of Markov kernels that will be used in the application of the Schauder fixed point theorem. 
\begin{definition}
    Let $\nu$ be a compactly supported probability measure on $\Omega$. We define $\mathcal X_\nu$ to be the set of Markov kernels $p:\mathcal B(\mathbb R)\times\Omega\to[0,1]$ satisfying
    \[\int_{\mathbb{R}} |x|^2 \, p(dx,\omega) < \infty.\]
    We equip $\mathcal X_\nu$ with the metric
    \begin{align}\label{distance}
        d_2(p, \tilde p) := \left( \int_\Omega W_2^2(p(\cdot,\omega), \tilde p(\cdot,\omega)) \, \nu(d\omega) \right)^{1/2},
    \end{align}
    where \(W_2\) denotes the 2-Wasserstein distance on \(\mathbb{R}\). Moreover, we identify kernels \(p\) and \(\tilde p\) whenever \(d_2(p, \tilde p) = 0\), i.e., if they coincide \(\nu\)-almost everywhere.
\end{definition}
Let $\pi:\mathbb R\times\Omega\to\Omega$ denote the projection onto the second argument, i.e., $\pi(x,\omega)=\omega$. Note that the disintegration theorem (Theorem \ref{thm_dis}) implies that the set $\mathcal X_\nu$ is identified with the space
\begin{align*}
    \mathcal P_2^{\nu}(\mathbb R^2):=\left\{\mu\in\mathcal P_2(\mathbb R^2)~:~\pi_\sharp\mu=\nu\right\}.
\end{align*}
Moreover, since the dynamics given by \eqref{A-1} preserve the \(\omega\)-marginal, the pushforward \(\nu=\pi_\sharp \mu_t\) is independent of the time \(t\). Again applying the disintegration theorem with respect to \(\pi\), each \(\mu_t\) can be written as
\[
\mu_t(dx, d\omega) = K_t(dx, \omega)\, \nu(d\omega),
\]
where \(K_t :  \mathcal{B}(\mathbb R)\times\Omega \to [0,1]\) is the corresponding Markov kernel. Furthermore, under the assumption that \(\mu_t\) has finite second moment in \(x\), the kernels \(K_t(dx, \omega)\) also have finite second moments for \(\nu\)-almost every \(\omega\) and all \(t\). Thus, we identify the solution \(\mu_t\) to \eqref{A-1} with a time-dependent family of Markov kernels \(K_t(dx,\omega)\), where each \(K_t \in \mathcal{X}_\nu\).

As $\omega \in \Omega$ is stationary along the flow, the dynamics of a probability measure $\mu$ is essentially one-dimensional. From the optimal transport theory see for instance \cite{Santam}, the probability measures in one-dimensional space have a special property. Namely, the Wasserstein distances can be computed using the pseudo-inverse cumulative distribution functions.

Note that a  Markov kernel $p \in \mathcal{X}_\nu$ defines a conditional cumulative distribution function $F_p$ and a conditional pseudo-inverse $F^{-1}_p$. Specifically, for every  $s \in (0,1)$ and $\omega \in \Omega$, 
\[
F_p(x,\omega) := p((-\infty,x], \omega) \quad \mbox{and} \quad  F^{-1}_p(s,\omega) := \inf \{x \in \mathbb{R} : F_p(x,\omega) \geq s \}.
\]
Conversely a Markov kernel $p \in \mathcal{X}_\nu$ can be recovered  from $F^{-1}_p$:
\[
p(\cdot, \omega) = F^{-1}_p(\cdot,\omega)_\sharp \mathcal{L}, \quad \text{for all} \quad \omega \in \Omega, 
\]
where $\mathcal{L}$ is the Lebesgue measure restricted on $(0,1)$. A path from $p_0\in \mathcal{X}_\nu$ to $p_1\in \mathcal{X}_\nu$ is the map $u \in [0,1] \mapsto (1-u)p_0 \oplus u p_1 \in \mathcal{X}_\nu$ defined by: 
\[
((1-u)p_0 \oplus u p_1) (\cdot, \omega) := ((1-u)F^{-1}_{p_0}(\cdot,\omega) + u F^{-1}_{p_1}(\cdot,\omega))_\sharp \mathcal{L}, \quad \forall~\omega \in \Omega.
\]
Moreover, it follows from the standard theory of one-dimensional optimal transport \cite{Santam} that 
\[
d_2(p_0,p_1) = \left(\int_\Omega \int_0^1 \Big |F^{-1}_{p_0}(s,\omega)-F^{-1}_{p_1}(s,\omega) \Big |^2 ds \, \nu(d\omega)\right)^{1/2}.
\]
In other words, the distance \eqref{distance} can be computed using the inverse cumulative distribution function. 

\subsection{Geodesic property}  \label{sec:3.2}
In the space $\mathcal{P}_2(\mathbb{R})$, the path $u \in [0,1] \mapsto (1-u)p_0 \oplus u p_1 \in \mathcal{X}_\nu$ is indeed a geodesic path between $p_0$ and $p_1$. In particular, the metric space $(\mathcal{X}_\nu, d_2)$ becomes a CAT(0) space.  

\begin{lemma} \label{Lemma:CATspace}
The metric space $(\mathcal{X}_\nu,d_2)$ is an exact CAT(0) uniquely geodesic space:
\begin{enumerate}
\item $(\mathcal{X}_\nu,d_2)$ is uniquely geodesic: for every $p_0,p_1 \in \mathcal{X}_\nu$ the path $u \mapsto  p_u := (1-u)p_0 \oplus up_1$ is the unique geodesic connecting $p_0$ and $p_1$,
\begin{gather*}
d_2(p_u,p_v)= |u-v| d_2(p_0,p_1), \quad \forall~ u,v \in [0,1].
\end{gather*}
\item $(\mathcal{X}_\nu,d_2)$ is a CAT(0) space: for every $p,p_0,p_1 \in \mathcal{X}_\nu$ and $u \in [0,1]$,
\begin{gather}
d_2(p,p_u)^2 \leq (1-u) \, d_2(p,p_0)^2 + u \, d_2(p,p_1)^2 -u(1-u) \, d_2(p_0,p_1)^2. \label{Equation:CATspace_01}
\end{gather}
\end{enumerate}

\end{lemma}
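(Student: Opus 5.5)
The plan is to turn everything into a Hilbert space. Consider the map $\Phi:\mathcal X_\nu\to L^2((0,1)\times\Omega,\mathcal L\otimes\nu)$ given by $\Phi(p)(s,\omega):=F^{-1}_p(s,\omega)$. We must first check that $(s,\omega)\mapsto F_p^{-1}(s,\omega)$ is jointly measurable. For fixed $x$, the function $\omega\mapsto F_p(x,\omega)$ is measurable, and $F_p$ is right-continuous in $x$. Since $\{F_p^{-1}(s,\omega)\le x\}=\{F_p(x,\omega)\ge s\}$, the preimage of each half-line is a jointly measurable set.

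Square integrability comes from $\int_0^1|F_p^{-1}(s,\omega)|^2ds=\int|x|^2p(dx,\omega)$ integrated against $\nu$. This requires the second moments to be $\nu$-integrable, which holds for elements of $\mathcal P_2^\nu(\mathbb R^2)$; I will read the definition of $\mathcal X_\nu$ that way. By the one-dimensional formula for $d_2$ recalled above, $d_2(p_0,p_1)=\|\Phi(p_0)-\Phi(p_1)\|_{L^2}$, so $\Phi$ is an isometric embedding. Its image is the set $\mathcal K$ of functions that are nondecreasing and left-continuous in $s$ for $\nu$-a.e. $\omega$, modulo null sets. The inverse of $\Phi$ on $\mathcal K$ is $f\mapsto f(\cdot,\omega)_\sharp\mathcal L$.

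The key observation is that $\mathcal K$ is a convex cone. A convex combination of nondecreasing left-continuous functions is again nondecreasing and left-continuous. Moreover it is exactly the quantile function of its own pushforward, because a nondecreasing left-continuous $g$ on $(0,1)$ satisfies $g=F^{-1}_{g_\sharp\mathcal L}$. Hence
\[
\Phi\big((1-u)p_0\oplus up_1\big)=(1-u)\Phi(p_0)+u\Phi(p_1).
\]
I expect this identification of the quantile function of the interpolant to be the only genuinely delicate point. It requires the left-continuous convention for $F^{-1}$ to be used consistently and "$\nu$-a.e." to be tracked, but it is standard one-dimensional optimal transport \cite{Santam}.

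With this in hand, both items follow from Hilbert space geometry. For item (1), $\|(1-u)f_0+uf_1-((1-v)f_0+vf_1)\|=|u-v|\,\|f_0-f_1\|$ gives the constant-speed geodesic property. For uniqueness, suppose $q$ is a midpoint-type point with $d_2(p_0,q)=u\,d_2(p_0,p_1)$ and $d_2(q,p_1)=(1-u)d_2(p_0,p_1)$. Then equality holds in the triangle inequality for $\Phi(q)$ in the strictly convex space $L^2$. This forces $\Phi(q)=(1-u)\Phi(p_0)+u\Phi(p_1)$, and injectivity of $\Phi$ (up to $d_2$-identification) gives $q=p_u$. Applying this at every $u$ shows that any geodesic coincides with $u\mapsto p_u$.

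For item (2), in a Hilbert space the identity
\[
\|f-(1-u)f_0-uf_1\|^2=(1-u)\|f-f_0\|^2+u\|f-f_1\|^2-u(1-u)\|f_0-f_1\|^2
\]
holds exactly. Apply it with $f=\Phi(p)$, $f_0=\Phi(p_0)$, $f_1=\Phi(p_1)$ and translate back through the isometry. This yields \eqref{Equation:CATspace_01}, in fact with equality, which is why the space is called "exact" CAT(0).
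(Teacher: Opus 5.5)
Your proof is correct and is essentially the paper's argument. The paper uses the quantile-function (Hilbert) structure of $(\mathcal P_2(\mathbb R),W_2)$ fiber by fiber in $\omega$ and integrates the resulting identities against $\nu$, which is exactly the $L^2((0,1)\times\Omega,\mathcal L\otimes\nu)$ computation you package through $\Phi$. Your version is a bit more complete: it makes explicit the joint measurability, the integrated second-moment condition needed for $d_2$ to be finite, and the uniqueness of geodesics. The paper leaves uniqueness implicit; it also follows by taking $p=q$ in \eqref{Equation:CATspace_01}.
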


\begin{proof}
Recall that the 2-Wasserstein space $\mathcal{P}_2(\mathbb{R})$ is an exact CAT(0) geodesic space (see \cite{Klo} or  \cite[Chapter 2]{Santam} for further details.) First, for every $\tilde p_0, \tilde p_1 \in \mathcal{P}_2(\mathbb{R})$, the path $u \mapsto \tilde p_u = (1-u) \tilde p_0 \oplus u  \tilde p_1$ defined by their inverse cumulative distribution functions $F^{-1}_{\tilde p_0}$ and $F^{-1}_{\tilde p_1}$ satisfies the following relation:~for every $u,v \in [0,1]$
\begin{gather*}
W_2(\tilde p_u , \tilde p_v)^2 = |u-v|^2\int_0^1 |F^{-1}_{\tilde p_0}(s)-F^{-1}_{\tilde p_1}(s)|^2 ds = |u-v|^2 W_2(\tilde p_0,\tilde p_1)^2.
\end{gather*}
Thus, the path $u \mapsto \tilde{p}_u$ is a geodesic path between $\tilde p_0$ and $\tilde p_1$.
From this, for every $\tilde p \in \mathcal{P}_2(\mathbb{R})$ we have
\begin{gather*}
W_2(\tilde p , \tilde p_u)^2 = (1-u) W_2(\tilde p , \tilde p_0)^2 + u W_2(\tilde p , \tilde p_1)^2 -u(1-u) W_2(\tilde p_0 , \tilde p_1)^2.
\end{gather*}
Now, the proof follows by setting $\tilde p_0 = p_0(\cdot , \omega)$ and $\tilde p_1 = p_1(\cdot, \omega)$ and by the integration with respect to the measure $\nu$.
\end{proof}

\begin{remark}
The inequality \eqref{Equation:CATspace_01} in Lemma \ref{Lemma:CATspace} is actually an equality:  we say that $(\mathcal{X}_\nu,d_2)$ is an exact CAT(0) space. However, we will not use that fact in this work.
\end{remark}

\subsection{Geodesic convexity and compactness} \label{sec:3.3}
In this subsection, we construct a subspace of the metric space $(\mathcal{X}_\nu, d_2)$ which is geodesically convex and compact. We restrict ourselves to Markov kernel $p(\cdot,\omega)$ that are Lipschitz with respect to $\omega$. 

For given parameters $\mathcal D,L >0$, we define the subspace $\mathcal X'_\nu(\mathcal D,L)$ of Markov kernels $p \in \mathcal{X}_\nu$ satisfying the following three properties:
\begin{enumerate}
	\item The center of mass on the $x$-space is located at zero,
	\begin{equation} \label{New-C-1}
		\int_\Omega \int_\mathbb{R} x p(dx, \omega)\nu(d\omega)=0.
	\end{equation}
	\item For any $\omega,\omega' \in \Supp(\nu), x \in \Supp(p(\cdot,\omega))$ and $x' \in \Supp(p(\cdot,\omega')),$ we have 
	\begin{equation} \label{New-C-2}
		|x-x'| \leq \mathcal D.
	\end{equation}
	\item The Kernel $p$ is Lipschitz in $\omega$ with respect to the 
	2-Wasserstein distance, i.e.,
	\begin{equation*} \label{New-C-3}
		W_2(p(\cdot, \omega),p(\cdot,\omega')) \leq L |\omega - \omega'|,\quad\forall~\omega,\omega'\in\Omega.
	\end{equation*}
\end{enumerate}
In the following lemma, we study the properties of $\mathcal X'_\nu(\mathcal D,L)$.

\begin{lemma} \label{Lemma:CAT0CompactConvex}
	The space $\mathcal X'_\nu(\mathcal D,L)$ satisfies the following statements:
	\begin{enumerate}
		\item \label{Item:CAT0CompactConvex_01} For any $p \in \mathcal X'_\nu(\mathcal D,L),  \omega \in \Supp(\nu)$ and $x \in \Supp(p(\cdot,\omega))$, we have 
		\[ |x| \leq \mathcal D. \]
		Thus the support of $p(\cdot, \omega)$ is contained in the interval $[-\mathcal D, \mathcal D]$.
		\vspace{0.2cm}
		\item \label{Item:CAT0CompactConvex_02}   $\mathcal X'_\nu(\mathcal D,L)$ is a  geodesically convex subset of $(\mathcal{X}_\nu,d_2)$.
		\vspace{0.2cm}
		\item \label{Item:CAT0CompactConvex_03}  $\mathcal X'_\nu(\mathcal D,L)$ is a compact subset of $(\mathcal{X}_\nu,d_2)$.
	\end{enumerate}
\end{lemma}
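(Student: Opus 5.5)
We prove the three items in order, working throughout with the quantile representation $F^{-1}_p(s,\omega)$, under which $d_2$ becomes an $L^2(\mathcal L\otimes\nu)$ distance.

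\emph{Item (1).} Fix $p\in\mathcal X'_\nu(\mathcal D,L)$, $\omega\in\Supp(\nu)$ and $x\in\Supp(p(\cdot,\omega))$. By \eqref{New-C-1},
\[
0=\int_\Omega\int_{\mathbb R} x'\,p(dx',\omega')\,\nu(d\omega'),
\]
so the centre of mass is an average of points $x'$ lying in supports $\Supp(p(\cdot,\omega'))$ with $\omega'\in\Supp(\nu)$. The set $\Supp(\nu)\setminus\Omega$ is $\nu$-null, so these points are all we need. By \eqref{New-C-2}, each such $x'$ satisfies $x-\mathcal D\le x'\le x+\mathcal D$. Averaging gives $x-\mathcal D\le 0\le x+\mathcal D$, hence $|x|\le\mathcal D$.

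\emph{Item (2).} Take $p_0,p_1$ in the set and the geodesic $p_u$, whose quantiles are $F^{-1}_{p_u}=(1-u)F^{-1}_{p_0}+uF^{-1}_{p_1}$. Each condition is convex along this path.

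\begin{itemize}
\item \emph{Condition \eqref{New-C-1}.} The mean is $\int\!\!\int_0^1 F^{-1}_{p_u}(s,\omega)\,ds\,\nu(d\omega)$, which is linear in $u$ and so vanishes.
\item \emph{Condition \eqref{New-C-2}.} Equivalently, $\sup_{\omega}\max\Supp-\inf_{\omega}\min\Supp\le\mathcal D$. The support of $p_u(\cdot,\omega)$ is contained in the closure of the range of $F^{-1}_{p_u}(\cdot,\omega)$. Since quantile functions are nondecreasing, its endpoints are $(1-u)a_0(\omega)+ua_1(\omega)$ and $(1-u)b_0(\omega)+ub_1(\omega)$, where $[a_i(\omega),b_i(\omega)]$ is the convex hull of $\Supp p_i(\cdot,\omega)$. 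Hence $\sup_\omega b_u-\inf_{\omega'} a_u\le(1-u)\mathcal D+u\mathcal D=\mathcal D$.
\item \emph{The Lipschitz condition.} Minkowski's inequality in $L^2(0,1)$ gives
\[
W_2(p_u(\omega),p_u(\omega'))=\|F^{-1}_{p_u}(\cdot,\omega)-F^{-1}_{p_u}(\cdot,\omega')\|_{L^2}\le(1-u)L|\omega-\omega'|+uL|\omega-\omega'|.
\]
\end{itemize}

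\emph{Item (3)} is the main step. Take a sequence $p_n$ in the set and use the map $p\mapsto F^{-1}_p$, which is an isometry from $(\mathcal X_\nu,d_2)$ into $L^2((0,1)\times\Omega)$. By item (1), all quantile functions take values in $[-\mathcal D,\mathcal D]$.

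First, fix a countable dense set $\{\omega_k\}\subset\Omega$. For each $k$, the measures $p_n(\cdot,\omega_k)$ are supported in $[-\mathcal D,\mathcal D]$, so $W_2$-compactness of $\mathcal P([-\mathcal D,\mathcal D])$ applies. A diagonal argument then yields a subsequence converging in $W_2$ at every $\omega_k$.

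Next, the uniform Lipschitz bound $L$ makes $\omega\mapsto p_n(\cdot,\omega)$ equicontinuous into $(\mathcal P([-\mathcal D,\mathcal D]),W_2)$. An Arzelà–Ascoli argument, valid because $\Omega$ is bounded and hence totally bounded, gives uniform convergence in $\omega\in\Omega$ to a limit $p(\cdot,\omega)$. This limit is $L$-Lipschitz, and it is measurable in $\omega$ because it is continuous. Then
\[
d_2(p_n,p)^2\le\sup_\omega W_2^2(p_n(\omega),p(\omega))\to0.
\]

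Finally, closedness: the limit satisfies \eqref{New-C-1} because means are continuous under $W_2$ on a bounded set. It satisfies \eqref{New-C-2} because supports are upper semicontinuous under weak convergence, which leaves the diameter bound intact. (For points $\omega\in\Supp(\nu)\setminus\Omega$, if any exist, extend by continuity.)

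The delicate points are measurability and the extension to $\overline\Omega$. These are handled by working with continuous representatives, which is legitimate since $d_2$ identifies kernels that agree $\nu$-a.e.
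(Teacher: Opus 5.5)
Your proof is correct and follows essentially the paper's route. Geodesic convexity comes from quantile functions and Minkowski's inequality, and compactness from Arzel\`a--Ascoli for uniformly $L$-Lipschitz maps $\Omega\to(\mathcal P([-\mathcal D,\mathcal D]),W_2)$. Two of your steps are mild streamlinings: the convex-hull-endpoint argument for \eqref{New-C-2} in item (2), and checking directly that the uniform limit lies in the set, instead of the paper's separate $d_2$-closedness argument via $\nu$-a.e.\ convergent subsequences. One wording fix: the property that keeps \eqref{New-C-2} in the limit is that every point of $\Supp(p(\cdot,\omega))$ is a limit of points of $\Supp(p_{n_k}(\cdot,\omega))$, i.e.\ lower, not upper, semicontinuity of the support under weak convergence, which is what the paper uses.
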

\begin{proof} In the sequel, we verify each statement one-by-one. \newline
	
	\noindent {\it Proof of item (\ref{Item:CAT0CompactConvex_01}).} By \eqref{New-C-2}, for each $\omega' \in \Supp(\nu)$ and $x' \in \Supp(p(\cdot,\omega'))$, we have 
	\[ |x-x'|\leq \mathcal D. \]
	Then, we use \eqref{New-C-1} to obtain 
	\[
	|x| = \bigg|x - \int_{\Omega}\int_{\mathbb R} x' p(dx',\omega') \nu(d\omega') \bigg| \leq \int_{\Omega}\int_{\mathbb R} |x- x'| \,  p(dx',\omega') \nu(d\omega') \leq \mathcal D.
	\]
	
	\noindent {\it Proof of item (\ref{Item:CAT0CompactConvex_01}).}  Let $p_0$ and $p_1$ be two points of $\mathcal X'_\nu(\mathcal D,L)$, and $F^{-1}_0$ and $F^{-1}_1$ be pseudo-inverses, respectively. For some $u \in [0,1]$, we define 
	\[ p := (1-u) p_0 \oplus up_1. \]
	Then, we have
	\[  p(\cdot, \omega) = F^{-1}(\cdot, \omega)_\sharp \mathcal{L}, \quad  F^{-1} := (1-u)F^{-1}_0 + u F^{-1}_1. \]
	Now, it suffices to show
	\[ p \in \mathcal X'_\nu(\mathcal D,L). \] 
	Using the pseudo-inverse, the first condition is immediately satisfied:
	\begin{align*}
		\begin{aligned}
			&\int_\Omega \Big( \int_\mathbb{R} x p(dx, \omega) \Big) \nu(d\omega) = \int_\Omega \Big( \int_0^1 F^{-1}(s,\omega)ds \Big) d\nu(\omega) \\
			& \hspace{1cm} = (1-u) \int_\Omega \Big( \int_0^1 F^{-1}_0(s,\omega) ds \Big)  d\nu(\omega) + u \int_\Omega \Big( \int_0^1 F^{-1}_1(s,\omega)ds \Big)  d\nu(\omega) =0.
		\end{aligned}
	\end{align*}
	Next, we set
	\[ \omega \in \Supp(\nu) \quad \mbox{and} \quad x \in \Supp(p(\cdot,\omega)). \]
	Then we have
	\[ x \in \overline{F^{-1}((0,1),\omega)}. \]
	Thus, for every $n\geq0$, there exist $s_n \in (0,1)$ and $s \in [0,1]$ such that 
	\[
	\lim_{n\to+\infty} F^{-1}(s_n,\omega)=x \quad \text{and} \quad \lim_{n\rightarrow +\infty}s_n = s.
	\]
	Since $F^{-1}$ is monotone-increasing, we can take the sequence $\{s_n\}$ to be monotone.
	We set 
	\[  x_{0,n} := F^{-1}_0(s_n, \omega) \quad \mbox{and} \quad x_{1,n} := F^{-1}_1(s_n,\omega). \]
	Then, we have
	\[
	x_n := F^{-1}(s_n,\omega)= (1-u) F^{-1}_0(s_n,\omega) + u F^{-1}_1(s_n,\omega)= (1-u) x_{0,n}+ u x_{1,n}.
	\]
	Since $F^{-1}_0(\cdot,\omega)$ and $F^{-1}_1(\cdot, \omega)$ are left continuous and have right limits, the monotonicity of the sequence $\{s_n\}$ yields that the limits $\lim_{n\rightarrow +\infty}x_{0,n}$ and $\lim_{n\rightarrow +\infty}x_{1,n}$ exist. Thus, we have 
	\[
	x_{0,n} \in \Supp(p_0(\cdot,\omega)), \ \ x_{1,n} \in \Supp(p_1(\cdot,\omega)) \ \ \text{and} \ \ \lim_{n\to+\infty} (1-u) x_{0,n} + u x_{1,n} = x.
	\]
	Similarly, if 
	\[ \omega' \in \Supp(\nu) \quad \mbox{and} \quad x' \in \Supp(p(\cdot,\omega')), \]
	then  there exist $x'_{0,n}$ and $x'_{1,n}$ such that
	\[
	x'_{0,n} \in \Supp(p_0(\cdot,\omega')), \ \ x'_{1,n} \in \Supp(p_1(\cdot,\omega')) \ \ \text{and} \ \ \lim_{n\to+\infty} (1-u) x'_{0,n} + u x'_{1,n} = x'.
	\]
	Since 
	\[ |x_{0,n}-x'_{0,n}| \leq \mathcal D \quad \mbox{and} \quad |x_{1,n}-x'_{1,n}|\leq\mathcal D \quad \mbox{for all $n \in \mathbb{N}$}, \]
	we have 
	\[ |x-x'|\leq\mathcal D. \] Now, it remains to prove the Lipschitz property.   Using the Minkowski inequality to 
	\begin{gather*}
	F^{-1}(s,\omega) -F^{-1}(s,\omega') = g_0(s) + g_1(s), \\
	g_0(s) := (1-u) ( F_0^{-1}(s, \omega) - F_0^{-1}(s,\omega') ), \quad g_1(s) := u ( F_1^{-1}(s, \omega) - F_1^{-1}(s,\omega') ),
	\end{gather*}
	one obtains
	\begin{align*}
		W_2(p(\cdot&,\omega), p(\cdot,\omega')) = \Big[ \int_0^1 | F^{-1}(s,\omega) -F^{-1}(s,\omega')|^2\, ds \Big]^{1/2} \\
		&= \| g_0 + g_1 \|_{L^2} \leq  \|g_0\|_{L^2} + \|g_1\|_{L^2} \\
		&\leq (1-u) \Big[ \int_0^1 | F^{-1}_0(s,\omega) -F^{-1}_0(s,\omega')|^2\, ds \Big]^{1/2} + u \Big[ \int_0^1 | F^{-1}_1(s,\omega) -F^{-1}_1(s,\omega')|^2\, ds \Big]^{1/2}  \\
		&= (1-u) \, W_2(p_0(\cdot,\omega),p_0(\cdot,\omega')) + u \, W_2(p_1(\cdot,\omega),p_1(\cdot,\omega')) \\
		&\leq L|\omega -\omega'|.
	\end{align*}
	This yields that the set $\mathcal X'_\nu(\mathcal D,L)$ is geodesically convex.\\
	
	\noindent {\it Proof of item (\ref{Item:CAT0CompactConvex_03}).} We first show that the set $\mathcal X'_\nu(\mathcal D,L)$ is closed. Let $\{p_n\}_{n\geq0}$ be a sequence of $\mathcal X'_\nu(\mathcal D,L)$ that converges to $p$ in the metric $d_2$. For every $n\geq0$ we denote by $\pi_n(\cdot,\cdot,\omega)$ the unique optimal transport plan between $p_n(\cdot,\omega)$ and $p(\cdot,\omega)$  given by
	\[
	\pi_n(\cdot,\cdot,\omega) = (F^{-1}_{p_n} \otimes F^{-1}_p)( \cdot, \omega)_\sharp \mathcal{L},
	\]
	where $F^{-1}_{p_n}$ and $F^{-1}_{p}$ are pseudo-inverses of cumulative distribution functions $F_{p_n}$ and $F_p$ of $p_n$ and $p$, respectively. Here, for each $s \in [0,1]$, we define
	\[
	\ (F^{-1}_{p_n} \otimes F^{-1}_p)(s, \omega) = (F^{-1}_{p_n}(s,\omega),F^{-1}_p(s,\omega)) \in \mathbb{R}^2.
	\]
	Then, we use the Cauchy-Schwarz inequality to find 
	\begin{align}\label{E3.2.1}
		\begin{aligned}
			&\Big| \int_\Omega \int_\mathbb{R} x \, p_n(dx,\omega) \nu(d\omega) - \int_\Omega \int_\mathbb{R} x \,  p(dx,\omega) \nu(d\omega) \Big| \\
			& \hspace{1cm} = \Big| \int_\Omega \int_\mathbb{R} (x-x') \, \pi_n(dx,dx',\omega) d\nu(\omega) \Big| \\
			& \hspace{1cm} \leq \Big[\int  \Big( \int |x-x'|^2 \, \pi_n(dx,dx',\omega) \Big) d\nu(\omega) \Big]^{1/2} \\
			& \hspace{1cm} = \Big[ \int W_2(p_n(\cdot, \omega), p(\cdot,\omega))^2 d\nu(\omega) \Big]^{1/2} \\
			& \hspace{1cm} =d_2(p_n,p).
		\end{aligned}
	\end{align}
	Since $d_2(p_n,p) \rightarrow 0$, we have
	\[ \int_\Omega \Big( \int_\mathbb{R} x \, p(dx, \omega) \Big) \, \nu(d\omega)=0. \]
	Moreover, it follows from the last equality in \eqref{E3.2.1} that   the sequence of functions $\{W_2(p_n,p)\}_{n\geq0}$ converges to zero in $L^2(\Omega)$. Then, one can find a subsequence $(n_k)_{k\geq0}$ such that, for $\nu$-a.e. $\omega$, 
	\[
	\lim_{k\to+\infty} W_2(p_{n_k}(\cdot,\omega),p(\cdot,\omega)) = 0.
	\] 
	Since the notion of convergence in the 2-Wasserstein space is stronger than the notion of convergence in the  weak-$*$ topology, we have
	\[
	\lim_{k\to+\infty} p_{n_k}(\cdot,\omega) = p(\cdot,\omega), \quad \mbox{in weak-$*$-sense for a.e. $\omega$}.
	\]
	This implies that every $x$ in the support of $p(\cdot,\omega)$ is the limit of a sequence $(x_k)_{k\geq0}$ of points $x_k$ in the support of  $p_{n_k}(\cdot,\omega)$. As each $\omega \mapsto p_{n_k}(\cdot,\omega)$ is uniformly Lipschitz with respect to the 2-Wasserstein distance, the limit $\omega \mapsto p(\cdot,\omega)$ is also Lipschitz a.e. and by continuity, $p$ admits a Lipschitz extension everywhere. It follows that $p \in \mathcal X'_\nu(\mathcal D,L)$, and $\mathcal X'_\nu(\mathcal D,L)$ is closed.	
	
	Next, we derive the compactness property. Let $\{p_n\}_{n\geq0}$ be a sequence in $\mathcal X'_\nu(\mathcal D,L)$. Then $\{p_n\}_{n\geq0}$ is a uniformly Lipschitz sequence in $C(\Omega,\mathcal{P}_2([-\mathcal D,\mathcal D]))$. As $\mathcal{P}_2([-\mathcal D,\mathcal D])$ equipped with the metric $W_2$ is compact, by Arzel\`a-Ascoli theorem, there exists a subsequence $(p_{n_k})_{k\geq0}$ and $p$ in $C^0(\Omega,\mathcal{P}_2([-\mathcal D,\mathcal D]))$ such that
	\begin{gather*}
		\lim_{k\to+\infty} \sup_{\omega \in\Omega} W_2(p_{n_k}(\cdot,\omega),p(\cdot,\omega)) =0.
	\end{gather*}
	In particular, we have
	\[  \lim_{k\to+\infty} d_2(p_{n_k},p)=0. \]
\end{proof}

\section{Dispersion estimate of the characteristics}\label{sec:4}
\setcounter{equation}{0}
In this section, we partially show the positive invariance of $\mathcal C_\nu(\Delta,\tilde\Delta)$ defined in \eqref{C_nu_def}. To achieve that goal, we decompose $\mathcal C_\nu(\Delta,\tilde\Delta)$  into the intersection of two subsets:
\begin{align}\label{C_sub}
\begin{aligned}
    \mathcal{C}_\nu^{(1)}(\Delta) &:= \left\{ \mu\in \mathcal P_2^\nu(\mathbb R^2)~:~\diam_x(\Supp(\mu)) \leq \Delta(x_c[\mu]) \right\},\\
    \mathcal{C}_\nu^{(2)}(\tilde\Delta) &:= \left\{ \mu\in \mathcal P_2^\nu(\mathbb R^2)~:~W_2(\mu(\cdot,\omega), \mu(\cdot,\omega')) \leq\tilde\Delta(x_c[\mu])|\omega - \omega'|, \quad \forall~ \omega, \omega' \in \Omega \right\},
\end{aligned}
\end{align}
where $\Delta$ and $\tilde\Delta$ are $2\pi$-periodic functions as we mentioned in Subsection \ref{sec:2.6}.

Our focus in this section is on establishing the positive invariance of $\mathcal{C}_\nu^{(1)}(\Delta)$ by deriving a dispersion estimate of the characteristics flow. The crucial estimate is a uniform bound on the spatial dispersion of the measure along the flow. Specifically, under suitable conditions, we show that the diameter of the support of the measure evolving under \eqref{A-1} remains uniformly bounded in time. To obtain this result, we analyze the characteristic flow and derive appropriate bounds.

For a given measure-valued solution $\mu$ to \eqref{A-1}, we define a flow parametrized  by $\omega$ given by $(t,x,\omega) \in [0,+\infty) \times \mathbb{R} \times \Omega \mapsto (X[\mu_t](t,x,\omega) = X(t,x,\omega), \omega) \in \mathbb{R} \times \Omega$ as the unique solution to the following Cauchy problem for the ODE:
\begin{equation} 
	\begin{cases} \label{char}
		\displaystyle \frac{d}{dt}X(t, x,\omega) = F[\mu_t](X(t, x,\omega),\omega), \quad \forall~t \ge 0, \\
		\displaystyle X(0, x,\omega) = x.
	\end{cases}
\end{equation}
Then, thanks to the representation formula of the continuity equation (see \cite{A-G-S} for instance), the solution to \eqref{A-1} satisfies the fixed point property:
\[
\forall\, t\geq0, \ \mu_t = (X[\mu_t](t) \otimes \Id)_\sharp \mu_0,
\]
where $X[\mu_t](t) \otimes \Id : \mathbb{R} \times \Omega \rightarrow \mathbb{R} \times \Omega$ is defined by 
\[
X[\mu_t](t) \otimes \Id(x,\omega) := (X[\mu_t](t,x,\omega) ,\omega).
\]

\begin{lemma}\label{L4.3}
	Let $\mu \in {\mathcal C}([0,\infty); \mathcal{P}^\nu_2(\mathbb{R}^2))$ be a global measure-valued solution to \eqref{A-1}, and let $(a,\omega)$ and $(a',\omega')$ be arbitrary points in $\mathrm{supp}(\mu_0)$. If there exist $t_*>0$ and $D>0$ such that for $t\in[0,t_*]$
	\begin{equation}\label{L3.3.0}
		\Big|X(t, a, \omega) - X(t, a', \omega')\Big| \leq D,
	\end{equation}
	then we have
	\[
	\frac{d}{dt} \Big ( X(t, a, \omega) - X(t, a', \omega') \Big) \leq \partial_x F[\tilde{\mu}_t](x_c(t),\omega_c) (X(t,a, \omega) - X(t,a', \omega'))+ C_1(\kappa,\gamma,D),
	\]
	where $C_1(\kappa,\gamma,D)$ is the positive constant defined in \eqref{const}.
\end{lemma}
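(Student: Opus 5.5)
The plan is to write the difference of the two characteristic velocities and split it so that the only term linear in $X(t,a,\omega)-X(t,a',\omega')$ has coefficient $\partial_x F[\tilde\mu_t](x_c(t),\omega_c)$, with everything else absorbed into $C_1(\kappa,\gamma,D)=\kappa(M+I)D^2+2I\gamma$. Abbreviate $X:=X(t,a,\omega)$ and $X':=X(t,a',\omega')$. By \eqref{char},
\[
\frac{d}{dt}(X-X') = F[\mu_t](X,\omega)-F[\mu_t](X',\omega').
\]
Since the target constant contains $2I\gamma$ rather than an $\omega$-derivative of $\partial_xF$ (which $(\mathcal A_2)$ does not control), the frequency dependence should be removed at the level of $F$ itself, by passing through the mean frequency $\omega_c$ twice:
\[
F[\mu_t](X,\omega)-F[\mu_t](X',\omega') = \big(F[\mu_t](X,\omega)-F[\mu_t](X,\omega_c)\big) - \big(F[\mu_t](X',\omega')-F[\mu_t](X',\omega_c)\big) + \big(F[\mu_t](X,\omega_c)-F[\mu_t](X',\omega_c)\big).
\]
Because $\omega_c$ is the barycenter of $\nu$, it lies in the convex hull of $\Omega$. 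Hence $|\omega-\omega_c|,|\omega'-\omega_c|\le\gamma$, and $\|\partial_\omega F\|_\infty\le I$ from $(\mathcal A_2)$ bounds the first two brackets by $2I\gamma$.

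For the third bracket, the mean value theorem gives a point $\xi$ between $X$ and $X'$ with $F[\mu_t](X,\omega_c)-F[\mu_t](X',\omega_c)=\partial_xF[\mu_t](\xi,\omega_c)(X-X')$. I then add and subtract $\partial_xF[\tilde\mu_t](x_c(t),\omega_c)(X-X')$. Two estimates control the error coefficient.
\begin{itemize}
\item Comparing $\mu_t$ with $\tilde\mu_t$ at the same point, $(\mathcal A_1)$ with $\alpha=1$ gives $|\partial_xF[\mu_t](\xi,\omega_c)-\partial_xF[\tilde\mu_t](\xi,\omega_c)|\le\kappa M W_1(\mu_t,\tilde\mu_t)$.
\item Moving the point from $\xi$ to $x_c(t)$, the bound $\|\partial_x^2F\|_\infty\le\kappa I$ gives $|\partial_xF[\tilde\mu_t](\xi,\omega_c)-\partial_xF[\tilde\mu_t](x_c(t),\omega_c)|\le\kappa I|\xi-x_c(t)|$.
\end{itemize}
Multiplying by $|X-X'|\le D$, the error term is at most $\kappa D\,(M W_1(\mu_t,\tilde\mu_t)+I|\xi-x_c(t)|)$.

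The main step is to show $W_1(\mu_t,\tilde\mu_t)\le D$ and $|\xi-x_c(t)|\le D$ for $t\in[0,t_*]$. For this I will use the representation $\mu_t=(X[\mu_t](t)\otimes\Id)_\sharp\mu_0$. Since the hypothesis \eqref{L3.3.0} holds for arbitrary pairs of points of $\Supp(\mu_0)$, every $x$-coordinate in $\Supp(\mu_t)$ lies within distance $D$ of every other. The average $x_c(t)$ lies in the closed convex hull of these coordinates, so $|y-x_c(t)|\le D$ for all $(y,\cdot)\in\Supp(\mu_t)$. The point $\xi$ lies between two such coordinates, so $|\xi-x_c(t)|\le D$.

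For the Wasserstein bound, I use the coupling of $\mu_t$ with $\tilde\mu_t=\delta_{x_c(t)}\otimes\nu$ that moves each $(y,\eta)$ to $(x_c(t),\eta)$; it is admissible because $\mu_t$ has $\omega$-marginal $\nu$. Its cost gives $W_1(\mu_t,\tilde\mu_t)\le\int|y-x_c(t)|\,\mu_t(dy,d\eta)\le D$. Inserting both bounds yields an error of at most $\kappa(M+I)D^2$ (up to sign, so certainly bounded above by it). Adding $2I\gamma$ gives exactly $C_1(\kappa,\gamma,D)$.

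The delicate point is the use of the hypothesis for all support pairs rather than only the fixed pair. I will state this explicitly, since the lemma quantifies over arbitrary $(a,\omega),(a',\omega')$; the remaining steps are routine.
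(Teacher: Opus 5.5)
Your proof is correct and is essentially the paper's argument. You use the same splitting through $\omega_c$ to get the $2I\gamma$ term and the same map $(y,\eta)\mapsto(x_c(t),\eta)$ to get $W_1(\mu_t,\tilde\mu_t)\le D$. Like the paper, you need the hypothesis \eqref{L3.3.0} for all pairs of support points, which you rightly make explicit. The only difference is cosmetic: you apply the mean value theorem between $X$ and $X'$ and then move $\partial_xF$ from $(\mu_t,\xi)$ to $(\tilde\mu_t,x_c(t))$, whereas the paper Taylor-expands each characteristic around $x_c(t)$ (remainder $\tfrac{\kappa I}{2}|X-x_c(t)|^2$) and compares $\partial_xF[\mu_t]$ with $\partial_xF[\tilde\mu_t]$ at $x_c(t)$. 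Both routes yield exactly $\kappa(M+I)D^2+2I\gamma$.
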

\begin{proof} We split the proof into several steps. Recall that $\tilde \mu$ has been defined in \eqref{Equation:LeaderDistribution}.

\vspace{0.2cm}
	
	\noindent $\bullet$~Step A (Temporal evolution of $X(t, \cdot, \cdot)$): Let $(\tilde{a}, \tilde{\omega})$ be an arbitrary point in $\mathrm{supp}(\mu_0)$. We have
	\begin{align}
		\begin{aligned} \label{L3.3.1}
			&\frac{d}{dt}X(t,\tilde{a}, \tilde{\omega}) =  F[\mu_t](X(t, \tilde{a}, \tilde{\omega}),{\tilde \omega})  \\
			& \hspace{0.2cm}  = F[\mu_t](x_c(t),\omega_c) + \partial_x F[\mu_t](x_c(t),\omega_c) (X(t,\tilde{a}, \tilde{\omega}) - x_c(t))  \\
			& \hspace{0.7cm}  + F[\mu_t](X(t,\tilde{a}, \tilde{\omega}), \tilde{\omega}) - F[\mu_t](x_c(t),\omega_c) -\partial_x F[\mu_t](x_c(t),\omega_c) (X(t,\tilde{a}, \tilde{\omega}) - x_c(t))\\
			&\hspace{.2cm}=: F[\mu_t](x_c(t),\omega_c) + \partial_x F[\mu_t](x_c(t),\omega_c) (X(t,\tilde{a}, \tilde{\omega}) - x_c(t))  + \mathcal{R}(t,\tilde{a},\tilde{\omega}).
		\end{aligned}
	\end{align}
	We rewrite the remainder term ${\mathcal R}$ as 
	\begin{align*}
		\begin{aligned}
			\mathcal{R}(t,\tilde{a},\tilde{\omega}) =& F[\mu_t](X(t,\tilde{a}, \tilde{\omega}), \tilde{\omega}) - F[\mu_t](X(t,\tilde{a}, \tilde{\omega}), w_c) \\
			& + F[\mu_t](X(t,\tilde{a}, \tilde{\omega}), w_c) - F[\mu_t](x_c(t),\omega_c) \\
			& -\partial_x F[\mu_t](x_c(t),\omega_c) (X(t,\tilde{a}, \tilde{\omega}) - x_c(t)).
		\end{aligned}
	\end{align*}
	Then, we use $(\mathcal{A}_2)$, Taylor expansion and \eqref{L3.3.0} to find 
	\begin{equation}\label{L3.3.2}
		\|\mathcal{R}\|_\infty \leq\gamma I + \frac{\kappa I}{2}|X(t,\tilde{a}, \tilde{\omega}) - x_c(t)|^2 \leq  I\gamma + \frac{\kappa I}{2}D^2.
	\end{equation}
	
	\vspace{0.2cm}
	
	\noindent $\bullet$~Step B (Temporal evolution of difference between two characteristics): It follows from \eqref{L3.3.1} that 
	\begin{align}\label{L3.3.3}
		\begin{aligned}
			&\frac{d}{dt}X(t, a, \omega) - \frac{d}{dt}X(t, a', \omega') \\
			& \hspace{1cm} =\partial_x F[\mu_t](x_c(t),\omega_c) (X(t,a, \omega) - X(t,a', \omega')) +  \mathcal{R}(t,a,\omega) - \mathcal{R}(t,a',\omega')\\
			&  \hspace{1cm} = \partial_x F[\tilde{\mu}_t](x_c(t),\omega_c) (X(t,a, \omega) - X(t,a', \omega'))\\
			&\hspace{1.5cm}+(\partial_x F[\mu_t](x_c(t),\omega_c) - \partial_x F[\tilde{\mu}_t](x_c(t),\omega_c)) (X(t,a, \omega) - X(t,a', \omega'))\\
			&\hspace{1.5cm}+\mathcal{R}(t,a,\omega) - \mathcal{R}(t,a',\omega').
		\end{aligned}
	\end{align}
	On the other hand, it follows from $(\mathcal{A}_1)$  that
	\[
	\Big|\partial_x F[\mu_t](x_c(t),\omega_c) - \partial_x F[\tilde{\mu}_t](x_c(t),\omega_c)\Big| \leq \kappa MW_1(\mu_t, \tilde{\mu}_t).
	\]
	Let $\gamma_t$ be the deterministic coupling of $(\mu_t, \tilde{\mu}_t)$ induced by the map $T_t : \mathbb{R}^2 \rightarrow \mathbb{R}^2$ defined by
	\[ T_t(x,\omega) = (x_c(t), \omega). \]
	Then, we have
	\begin{align}\label{L3.3.3.1}
		\begin{aligned}
			W_1(\mu_t, \tilde{\mu}_t) &\leq \int_{\mathbb{R}^2 \times \mathbb{R}^2}\Big(|x-y| + |\omega - \omega_*|\Big) \gamma_t (dx,dy,d\omega,d\omega)\\
			&=\int_{\mathbb{R}^2}|x-x_c(t)| \mu_t(dx,d\omega) \leq D,
		\end{aligned}
	\end{align}
	which implies
	\begin{align}\label{L3.3.4}
		\Big|\partial_x F[\mu_t](x_c(t),\omega_c) - \partial_x F[\tilde{\mu}_t](x_c(t),\omega_c)\Big| \le\kappa MD.
	\end{align}
	Finally, we combine \eqref{L3.3.0}, \eqref{L3.3.2}, \eqref{L3.3.3} and \eqref{L3.3.4} to see 
	\begin{align*}
		\begin{aligned}
			&\frac{d}{dt}X(t, a, \omega) - \frac{d}{dt}X(t, a', \omega') \\
			& \hspace{1cm} \leq \partial_x F[\tilde{\mu}_t](x_c(t),\omega_c) (X(t,a, \omega) - X(t,a', \omega'))+\kappa MD^2 +2I\gamma + \kappa ID^2.
		\end{aligned}
	\end{align*}
	This ends the proof.
\end{proof}

\vspace{0.2cm}
Let $\mu \in {\mathcal C}([0,\infty); \mathcal{P}_2^\nu(\mathbb{R}^2))$ be a measure-valued solution to \eqref{A-1}. We introduce a change of variable $s = x_c[\mu_t]=x_c(t)$, assuming that there exists $D>0$ and $t_*>0$ such that
\begin{align*}
    \diam_x(\Supp(\mu_t))\le D,\quad\forall~t\in[0,t_*].
\end{align*}

By ($\mathcal A_4$), Lemma \ref{L4.2} and \eqref{L3.3.3.1}, we obtain the following lower bound on $\dot x_c(t)$:
\begin{align*}
    \dot x_c(t)\ge A-((\kappa+1)I+ \kappa M)D-I\gamma,\quad\forall~t\in[0,T].
\end{align*}
To ensure that $\dot x_c(t)>0$ on $[0,t_*]$, we assume
\begin{equation}\label{param1}
A - ((\kappa+1)I+ \kappa M)D - I \gamma   > 0.
\end{equation}

Under this assumption, the function $t \in[0,t_*] \mapsto x_c(t) \in\mathbb{R}$ is  strictly increasing  and  admits the inverse function $\tau[\mu]: [s_0, s_*] \rightarrow [0,t_*]$ where $s_0:=x_c(0)$ and $s_*:=s(t_*)$.  Using the  change of variables $t=\tau[\mu](s)$ the  characteristic flow   \eqref{char} becomes:
\begin{equation} \label{D-5}
\begin{aligned}
&t=\tau[\mu]\circ x_c[\mu_t], \\
&\tilde{X}(s,x,\omega) := X(\tau[\mu](s), x,\omega), \quad \forall~(x,\omega) \in \mathrm{supp}(\mu_0).
\end{aligned}
\end{equation}
In the next lemma, we study the dispersion estimate  of $\tilde X$ in $s$-variable. 
\begin{lemma}\label{L4.4} 
Suppose that the parameters $\kappa, \gamma$ and $D$ satisfy  \eqref{param1}.  Let $\mu \in {\mathcal C}([0,\infty); \mathcal{P}_2^\nu(\mathbb{R}^2))$ be a measure-valued solution to \eqref{A-1} satisfying a priori condition that there exists $t_* > 0$ such that
\begin{align*}
    \diam_x(\mu_t)\le D,\quad\forall~t\in[0,t_*].
\end{align*}
Then, using the change of variable \eqref{D-5}, one obtains for all $s\in(s_0,s_*)$, 
\[
\frac{d}{ds}\Big(\tilde{X}(s,x,\omega)-\tilde{X}(s,x',\omega')\Big) \leq \frac{\partial_x F[\tilde{\mu}_t]( x_c(t), \omega_c)}{F[\tilde{\mu}_t](x_c(t), \omega_c)}\Big(\tilde{X}(s,x,\omega)-\tilde{X}(s,x',\omega')\Big) + C_2(\kappa,\gamma,D),
\]
for all $(x,\omega),(x',\omega')\in\Supp(\mu_0)$, where the constant $C_2(\kappa,\gamma,D)$ is defined in \eqref{const} and $\tilde \mu$ is defined in \eqref{Equation:LeaderDistribution}.
\end{lemma}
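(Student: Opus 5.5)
The plan is to use the chain rule for the change of variables \eqref{D-5} and then insert the estimate of Lemma \ref{L4.3}. Write $Y(t):=X(t,x,\omega)-X(t,x',\omega')$ and $\tilde Y(s):=Y(\tau[\mu](s))$. Under \eqref{param1}, Lemma \ref{L4.2} together with \eqref{L3.3.3.1} shows that $\dot x_c(t)\ge A-((\kappa+1)I+\kappa M)D-I\gamma>0$. Hence $\tau[\mu]$ is $C^1$ and $\frac{d}{ds}\tilde Y(s)=\dot Y(t)/\dot x_c(t)$ with $t=\tau[\mu](s)$. The diameter hypothesis gives \eqref{L3.3.0} on $[0,t_*]$, so Lemma \ref{L4.3} applies and yields
\[
\dot Y\le \beta_t Y + C_1,\qquad \beta_t:=\partial_x F[\tilde\mu_t](x_c(t),\omega_c).
\]
Set $F_c(t):=F[\tilde\mu_t](x_c(t),\omega_c)\ge A$ by $(\mathcal A_4)$, and $e(t):=\dot x_c(t)-F_c(t)$. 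Lemma \ref{L4.2} together with \eqref{L3.3.3.1} gives $|e(t)|\le ((\kappa+1)I+\kappa M)D+I\gamma$.

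The key step is to avoid dividing the inequality by $\dot x_c$ directly, since $\dot Y$ may have either sign. Instead I would return to the exact identity \eqref{L3.3.3}, namely $\dot Y=\beta_t Y+\mathcal E(t)$, where $\mathcal E$ collects the $\kappa MD\,|Y|$ term and $\mathcal R(t,a,\omega)-\mathcal R(t,a',\omega')$. By \eqref{L3.3.2} and \eqref{L3.3.4}, $|\mathcal E|\le C_1$ and $|\beta_t Y|\le \kappa I D$. Dividing the identity by $\dot x_c$ gives
\[
\frac{d}{ds}\tilde Y=\frac{\beta_t}{F_c}Y+\frac{\mathcal E}{\dot x_c}+\beta_tY\Big(\frac1{\dot x_c}-\frac1{F_c}\Big).
\]
The second term is at most $C_1/\dot x_c$. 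Since $\frac1{\dot x_c}=\frac1{F_c}-\frac{e}{F_c\dot x_c}$, we can rewrite $\frac{\mathcal E}{\dot x_c}=\frac{\mathcal E}{F_c}-\frac{\mathcal E e}{F_c\dot x_c}$, which is bounded by $\frac{C_1}{A}+\frac{C_1|e|}{A(A-\cdots)}$. The third term equals $-\beta_tY\,e/(F_c\dot x_c)$, bounded by $\kappa ID\,|e|/(A(A-\cdots))$.

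Adding these bounds, with $|e|\le ((\kappa+1)I+\kappa M)D+I\gamma$, $F_c\ge A$ and $\dot x_c\ge A-((\kappa+1)I+\kappa M)D-I\gamma$, gives exactly
\[
\frac{C_1}{A}+\frac{(((\kappa+1)I+\kappa M)D+I\gamma)(\kappa ID+C_1)}{A(A-((\kappa+1)I+\kappa M)D-I\gamma)}=C_2(\kappa,\gamma,D).
\]
The main obstacle is this bookkeeping of the mismatch between $\dot x_c$ and $F_c$. Treating $\mathcal E$ by its absolute value, rather than as a one-sided bound, is what makes the division by a positive but non-constant $\dot x_c$ legitimate. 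The same absolute-value treatment also produces the stated structure of $C_2$.
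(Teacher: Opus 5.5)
Your proposal is correct and is essentially the paper's proof. The paper writes $\frac{d}{ds}\tilde Y=\frac{\dot Y}{F_c}-\frac{d\tilde Y}{ds}\cdot\frac{e}{F_c}$, bounds the first piece one-sidedly via Lemma \ref{L4.3}, and bounds the second in absolute value using $|\dot Y|\le \kappa ID+C_1$; after substituting $\dot Y=\beta_tY+\mathcal E$ this is term-for-term your four-term split and gives the same $C_2(\kappa,\gamma,D)$.

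One small correction to your commentary: dividing the one-sided inequality by the positive quantity $\dot x_c$ is legitimate. The absolute-value bounds on $\beta_tY$ and $\mathcal E$ are needed only to replace the factor $1/\dot x_c$ by $1/F_c$, not to justify the division.
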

\begin{proof} We use the chain rule \eqref{L3.3.1} to see
\begin{align*}
\frac{d}{dt}X(t,x,\omega) &= \frac{d}{dt}\tilde{X}(s,x,\omega) = \frac{d}{ds}\tilde{X}(s,x,\omega)\frac{d}{dt}x_c(t)\\
&= \frac{d}{ds}\tilde{X}(s,x,\omega)\bigg(\frac{d}{dt}x_c(t) - F[\tilde{\mu}_t](x_c(t), \omega_c)\bigg) + \frac{d}{ds}\tilde{X}(s,x,\omega)\bigg(F[\tilde{\mu}_t](x_c(t), \omega_c)\bigg). 
\end{align*}
By \eqref{As4}, one has $F(\tilde\mu_t,x_c(t),\omega_c)>0$ and use it to derive
\begin{align*}
\frac{d}{ds}\tilde{X}(s,x,\omega) &= \frac{\frac{d}{dt}X(t,x,\omega) }{F[\tilde{\mu}_t]( x_c(t), \omega_c)} - \frac{\frac{d}{ds}\tilde{X}(s,x,\omega)\Big(\frac{d}{dt}x_c(t) - F[\tilde{\mu}_t](x_c(t), \omega_c)\Big)}{F[\tilde{\mu}_t](x_c(t), \omega_c)}
\end{align*}
for all $t\in(0,t_*)$ and $s\in(s_0,s_*)$. This yields
\begin{align}
\begin{aligned} \label{D-6}
&\hspace{-1cm}\frac{d}{ds}\Big(\tilde{X}(s,x,\omega)-\tilde{X}(s,x',\omega')\Big) \\
& = \frac{\frac{d}{dt}\Big(X(t,x,\omega)-X(t,x',\omega')\Big) }{F[\tilde{\mu}_t](x_c(t), \omega_c)}\\
&\hspace{.4cm} - \frac{\frac{d}{ds}\Big(\tilde{X}(s,x,\omega)- \tilde{X}(s,x',\omega')\Big)\Big(\frac{d}{dt}x_c(t) - F[\tilde{\mu}_t](x_c(t), \omega_c)\Big)}{F[\tilde{\mu}_t](x_c(t), \omega_c)}\\
&=: {\mathcal I}_{21}  + {\mathcal I}_{22}.
\end{aligned}
\end{align}
Below, we estimate the terms ${\mathcal I}_{2i}$ one by one. 

\vspace{.2cm}

\noindent $\bullet$~Case A (Estimate of ${\mathcal I}_{21}$): By Lemma \ref{L4.3}, we have
\begin{equation} \label{D-7}
{\mathcal I}_{21} \leq \frac{\partial_x F[\tilde{\mu}_t]( x_c(t), \omega_c)}{F[\tilde{\mu}_t](x_c(t), \omega_c)}\Big(\tilde{X}(s,x,\omega)-\tilde{X}(s,x',\omega')\Big) + \frac{\kappa( M+I)D^2 +2 I\gamma}{F[\tilde{\mu}_t](x_c(t), \omega_c)}.
\end{equation}

\vspace{0.2cm}

\noindent $\bullet$~Case B (Estimate of ${\mathcal I}_{22}$): It follows from Lemma \ref{L4.1} and $(\mathcal{A}_2)$ that 
\begin{align*}
\begin{aligned} 
& \bigg|\frac{d}{ds}\Big(\tilde{X}(s,x,\omega)- \tilde{X}(s,x',\omega')\Big)\bigg| \\
& \hspace{1cm} = \frac{\Big|\frac{d}{dt}(X(t,x,\omega)-X(t,x',\omega'))\Big|}{\Big|\frac{d}{dt}x_c(t)\Big|} \leq \frac{\kappa ID +\kappa( M+I)D^2 +2I\gamma}{F[\tilde{\mu}_t](x_c(t), \omega_c) -((\kappa+1)I+ \kappa M) D - I \gamma}.
\end{aligned}
\end{align*}
By Lemma \ref{L4.2} and $(\mathcal{A}_4)$, we obtain
\begin{align}
\begin{aligned} \label{D-8}
{\mathcal I}_{22} &\leq \frac{\bigg|\frac{d}{ds}\Big(\tilde{X}(s,x,\omega)- \tilde{X}(s,x',\omega')\Big)\Big|\cdot\bigg|\frac{d}{dt}x_c(t) - \Big(F(\tilde{\mu}_t,x_c(t),\omega_c)\Big)\Big|}{F[\tilde{\mu}_t](x_c(t), \omega_c)}\\
&\leq \frac{\big(\kappa ID +\kappa(M+I)D^2 +2 I\gamma\big)\big(((\kappa+1)I+ \kappa M)D+ I \gamma \big)}{A\big(A - ((\kappa+1)I+ \kappa M)D - I \gamma\big)}.
\end{aligned}
\end{align}
In \eqref{D-6}, we combine \eqref{D-7} and \eqref{D-8} to find the desired estimate:
\begin{align*}
\begin{aligned}
&\frac{d}{ds}\Big(\tilde{X}(s,x,\omega)-\tilde{X}(s,x',\omega')\Big) \\
& \hspace{1cm} \leq \frac{\partial_x F[\tilde{\mu}_t](x_c(t), \omega_c)}{F[\tilde{\mu}_t]( x_c(t), \omega_c)}\Big(\tilde{X}(s,x,\omega)-\tilde{X}(s,x',\omega')\Big) + \frac{\kappa( M+I)D^2 +2I\gamma}{A}\\
& \hspace{1.5cm}+\frac{\big(\big(\kappa ID +\kappa( M+I)D^2 +2 I\gamma\big)((\kappa+1)I+ \kappa M)D+ I \gamma \big)}{A\big(A -((\kappa+1)I+ \kappa M)D - I \gamma\big)},
\end{aligned}
\end{align*}
which completes the proof.
\end{proof}
Now, we are ready to show that $\mathcal C_\nu^{(1)}(\Delta)$ is positively invariant.
\begin{proposition}\label{P4.1}
Suppose that the parameters $\kappa,\gamma$ and $D$ satisfy \eqref{param1} and   
\begin{align}\label{D_cond}
2\pi C_2(\kappa,\gamma,D)\frac{\exp(2\pi\kappa I/A)}{\kappa B}\le D.
\end{align}
Let $s \in\mathbb{R} \mapsto \Delta(s) \in (0,D)$ be the unique $2\pi$-periodic solution of 
\begin{gather*}
\frac{d\Delta(s)}{ds} = C_2(\kappa,\gamma,D) + \frac{\partial_s F[\delta_s \otimes \nu](s, \omega_c) }{F[\delta_s \otimes \nu]( s, \omega_c)} \Delta(s).
\end{gather*}
Then, the set $\mathcal C_\nu^{(1)}(\Delta)$ is positively invariant under \eqref{A-1}. In particular, if $\mu_0\in\mathcal C_\nu^{(1)}(\Delta)$, then the support of the solution $\mu$ remains uniformly bounded in $x$, i.e.,
    \begin{align*}
        \diam_x(\Supp(\mu_t)) \leq \Delta(x_c[\mu_t]) \le D,\quad\forall~t\ge0.
    \end{align*}
\end{proposition}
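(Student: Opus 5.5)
The argument has two parts: first that $\Delta$ is well defined with values in $(0,D)$, and then a comparison argument along characteristics in the $s=x_c$ variable, closed by a continuity (bootstrap) argument.

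\emph{Step 1: $\Delta$ is well defined and $\Delta<D$.} Set $\beta(s):=\partial_sF[\delta_s\otimes\nu](s,\omega_c)/F[\delta_s\otimes\nu](s,\omega_c)$. By $(\mathcal A_3)$, $\beta$ is $2\pi$-periodic. By $(\mathcal A_4)$, the denominator is at least $A$. By $(\mathcal A_5)$, $\int_0^{2\pi}\beta\le-\kappa B<0$. The identity $\frac{d}{ds}F[\delta_s\otimes\nu](s,\omega_c)=\partial_xF+(\text{derivative through the measure})$ needs some care. I will read $\partial_s$ in the statement as the same expression appearing in $(\mathcal A_5)$ and Lemma \ref{L4.4}, namely $\partial_xF[\tilde\mu_t](x_c,\omega_c)$ evaluated at $x_c=s$.

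With this reading, $|\beta|\le\kappa I/A$ by $(\mathcal A_2)$, so $\int_0^{2\pi}|\beta|\le 2\pi\kappa I/A=:\beta_{\max}$. Lemma \ref{L2.2}, applied with $\alpha=C_2(\kappa,\gamma,D)>0$, gives a unique $2\pi$-periodic solution $\Delta>0$. Items \ref{Item:L2.2_2}--\ref{Item:L2.2_3} bound it by
\[
\Delta\le 2\pi C_2\,\exp\!\left(\tfrac{2\pi\kappa I}{A}\right)/(\kappa B)\le D,
\]
where the last inequality is \eqref{D_cond}. To get the strict inequality $\Delta<D$, I use that item (2) is strict unless $\beta^-\equiv$ the full integral on every window. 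If strictness turns out to be delicate, I will carry $\le D$ throughout, which suffices for the bootstrap.

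\emph{Step 2: comparison along characteristics.} Let $\mu_0\in\mathcal C^{(1)}_\nu(\Delta)$. Define $t_*$ as the supremum of times $t$ such that $\diam_x(\Supp\mu_\tau)\le D$ for all $\tau\le t$. By continuity of characteristics, $t_*>0$. (If $\Delta(x_c[\mu_0])<D$ this is clear; in the borderline case I will use a slightly enlarged $D'$ and let $D'\downarrow D$, using continuity of $C_2$ in $D$.) On $[0,t_*]$ condition \eqref{param1} holds, so $x_c$ is strictly increasing and Lemma \ref{L4.4} applies. For $(x,\omega),(x',\omega')\in\Supp\mu_0$, set $y(s):=\tilde X(s,x,\omega)-\tilde X(s,x',\omega')$. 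Then
\[
y'(s)\le\beta(s)\,y(s)+C_2,
\]
while $\Delta'(s)=\beta(s)\Delta(s)+C_2$, with $y(s_0)\le\Delta(s_0)$. Hence $(y-\Delta)'\le\beta(y-\Delta)$. By Grönwall, $y(s)\le\Delta(s)$ on $[s_0,s_*]$. Exchanging the roles of the two points gives $|y|\le\Delta$.

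Since $\Supp\mu_t$ is the closure of the image of $\Supp\mu_0$ under $X(t)\otimes\Id$ (by the representation $\mu_t=(X(t)\otimes\Id)_\sharp\mu_0$), we obtain
\[
\diam_x\Supp\mu_t\le\Delta(x_c(t))\le D\quad\text{on }[0,t_*].
\]

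\emph{Step 3: closing the bootstrap.} Suppose $t_*<\infty$. If $\Delta<D$ strictly, continuity of $t\mapsto\diam_x\Supp\mu_t$ along the flow (the characteristics are Lipschitz in $t$) extends the bound $\le D$ beyond $t_*$, a contradiction; hence $t_*=\infty$. In the non-strict case, I run the argument with $D'>D$ slightly larger: Step 2 still yields $\diam\le\Delta\le D<D'$, which gives the same contradiction.

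The main obstacle is this bootstrap-closing step, namely securing a strict margin below the a priori bound, together with the precise meaning of $\partial_s$ in the ODE for $\Delta$. The comparison argument itself is routine once Lemma \ref{L4.4} is available.
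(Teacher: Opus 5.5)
Your proposal is correct and follows the paper's proof. Lemma \ref{L2.2} with \eqref{D_cond} yields the periodic $\Delta$, with $\partial_s$ read as $\partial_xF[\tilde\mu_t](x_c,\omega_c)$ exactly as you and the paper do; Lemma \ref{L4.4} plus the comparison principle gives $\tilde X(s,x,\omega)-\tilde X(s,x',\omega')\le\Delta(s)$ under the a priori bound; and a continuity argument closes the bootstrap, where the paper uses an $\varepsilon$-margin $\Sigma(s)<\Delta(s)+\varepsilon$ in place of your $t_*$.

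The strict margin $\max\Delta<D$, which both versions of the bootstrap tacitly need, holds outright. Since $-\int_0^{2\pi}\beta\ge\kappa B>0$, the step $e^{x}-1\ge x$ in Lemma \ref{L2.2} is strict, so $e^{-\int_0^{2\pi}\beta}-1>\kappa B$. This is a cleaner justification than your remark about item (2).

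You should therefore drop the $D'$ fallback, which would not work as written anyway. With the a priori bound $D'$, Lemma \ref{L4.4} produces $C_2(\kappa,\gamma,D')$. Your Step 2 would then bound the diameter only by the larger periodic solution $\frac{C_2(\kappa,\gamma,D')}{C_2(\kappa,\gamma,D)}\Delta$, not by $\Delta$.
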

\begin{proof}
    This proposition can be interpreted as an extension of Lemma \ref{L4.4}, in the sense that an initial condition in $\mathcal C_\nu^{(1)}(\Delta)$ eventually implies that the a priori assumption of Lemma \ref{L4.4} holds for all $t\ge0$. We split the proof into two steps.

    \vspace{.2cm}
    
    \noindent$\bullet$ Step A: By Lemma \ref{L4.4}, if 
    \begin{align*}
        \diam_x(\mu_t)\le D,\quad\forall~t\in[0,t_*],
    \end{align*}
    then $\dot x_c(t)>0$ on $t\in[0,t_*]$ and
    \begin{align*}
        \frac{d}{ds}\Big(\tilde X(s,x,\omega)-\tilde X(s,x',\omega')\Big)\le\alpha+\beta(s)\Big(\tilde X(s,x,\omega)-\tilde X(s,x',\omega')\Big),
    \end{align*}
    for all $s\in(s_0,s_*)$, where we define
    \begin{align*}
        \alpha=C_2(\kappa,\gamma,D)\quad\mbox{and}\quad\beta(s)=\frac{\partial_x F[\tilde{\mu}_t](x_c(t), \omega_c) }{F[\tilde{\mu}_t]( x_c(t), \omega_c)} = \frac{\partial_s F[\delta_s \otimes \nu](s, \omega_c) }{F[\delta_s \otimes \nu]( s, \omega_c)}.
    \end{align*}
    
    Now, consider the differential inequality
    \begin{align}\label{DE}
        \frac{dy(s)}{ds}=\alpha+\beta(s)y(s).
    \end{align}
    By ($\mathcal A_5$), the function $\beta$ satisfies \eqref{B-11}. Then, by Lemma \ref{L2.2}, the differential equation \eqref{DE} admits a (unique) positive periodic solution to \eqref{DE}, denoted by $\Delta(s)$. Since $\mu_0\in \mathcal{C}_\nu^{(1)}(\Delta)$, it follows from the definition of $\mathcal{C}_\nu^{(1)}(\Delta)$ that
    \begin{align*}
        \tilde X(s_0,x,\omega)-\tilde X(s_0,x',\omega')\le\Delta (s_0),\quad\forall~(x,\omega),(x',\omega')\in\Supp(\mu_0).
    \end{align*}
    Then, by the comparison principle, we have
    \begin{align*}
        \tilde X(s,x,\omega)-\tilde X(s,x',\omega')\le\Delta(s)\le D,\quad\forall~s\in[s_0,s_*],
    \end{align*}
    where the last inequality follows from the condition \eqref{D_cond} and the upper bound of $\Delta(s)$ obtained by Lemma \ref{L2.2}. 
Indeed using the estimate \eqref{Equation:L2.2_01}, $(\mathcal{A}_2)$, and $(\mathcal{A}_4)$, one obtains
\begin{gather*}
\int_0^{2\pi} |\beta(s)| \, ds \leq 2\pi \frac{\kappa I}{A}, \quad  \int_0^{2\pi} \beta(s) \, ds \leq - \kappa B, \\
\max_{s \in \mathbb{R}} \Delta(s) \leq 2 \pi C_2(\kappa,\gamma,D) \frac{\exp(2\pi \kappa I/A)}{\kappa B} \leq D.
\end{gather*}

    \vspace{.2cm}
    
    \noindent$\bullet$ Step B: Let $\Sigma(s)=\diam_x(\Supp(\mu_t))$, where $s=x_c(t)$. For any $\varepsilon>0$, define
    \begin{align*}
        s^\infty(\varepsilon):=\inf\{s>s_0~:~\Sigma(s)<\Delta(s)+\varepsilon\}.
    \end{align*}
    Assume that $\mu_0\in\mathcal C_\nu^{(1)}(\Delta)$. Since $\Sigma(s)$ is continuous and $\Sigma(s_0)\le\Delta(s_0)$, one can apply a bootstrapping argument using the comparison principle and Step A to conclude that
    \begin{align*}
        s^\infty(\varepsilon)=\infty,\quad\forall~\varepsilon>0.
    \end{align*}
    In other words, one has
    \begin{align*}
        \Sigma(s)<\Delta(s)+\varepsilon,\quad\forall~s\ge s_0,\quad\forall~\varepsilon>0.
    \end{align*}
    Taking $\varepsilon\to0$, we conclude that
    \begin{align*}
        \Sigma(s)\le\Delta(s),\quad\forall~s\ge s_0,
    \end{align*}
    which implies the desired uniform-in-time bound on $\diam_x(\Supp(\mu_t))$. This ends the proof.
\end{proof}

\begin{remark} \label{Remark:Feasibility}
We check that there exists a non empty open set of  parameters $(\kappa,\gamma)$ satisfying \eqref{param1} such that \eqref{D_cond} admits a solution $D$. In particular  the domain of validity of Theorem \ref{main} contains the subdomain
\begin{gather*}
\{ (\kappa,\gamma) : \kappa \in (0,1) \ \text{and} \ \gamma \in (0,\gamma(\kappa)) \}
\end{gather*} 
where $\gamma(\kappa) = \mathcal{O}(\kappa)$. Moreover, a dispersion constant $D$ can be chosen independently of $(\kappa,\gamma)$, depending only on $A,B,I,M$ on that subdomain. 
\end{remark}

\begin{proof}
Let $\kappa>0$. define
\begin{gather}
D(\kappa) := \min \Big( \frac{A}{4((\kappa+1)I+\kappa M)}, \frac{BA^2}{16\pi((M+I)A +I((\kappa+1)I+ \kappa M))\exp(2\pi I/A)} \Big). \label{Equation:DispersionFormula}
\end{gather}
Choose any $\gamma \in (0,\gamma(\kappa))$ where
\begin{gather}
\gamma(\kappa) := \min \Big( \frac{A}{4I}, \frac{\kappa(M+I)D(\kappa)^2}{2I}, \frac{((\kappa+1)I+\kappa M)D(\kappa)}{I} \Big). \label{Equation:GammaFormula}
\end{gather}
We observe that the assumption \eqref{param1}  is satisfied using
\begin{gather*}
\gamma \leq \frac{A}{4I}, \quad D(\kappa) \leq \frac{A}{4((\kappa+1)I+\kappa M)}, \\
A - ((\kappa+1)I+\kappa M)D - I \gamma \geq \frac{A}{2} >0.
\end{gather*}
Moreover, from \eqref{Equation:GammaFormula} we have
\begin{gather}
\gamma \leq  \frac{\kappa(M+I)D(\kappa)^2}{2I}, \notag 
\end{gather}
which yields
\begin{equation}\label{Equation:C1}
C_1(\kappa,\gamma,D) \leq 2\kappa(M+I)D(\kappa)^2.
\end{equation}
Next, using \eqref{Equation:C1} and 
\begin{equation*}
\gamma \leq \frac{((\kappa+1)I+\kappa M)D(\kappa)}{I}, 
\end{equation*}
we estimate that
\begin{align*}
C_2(\kappa,\gamma,D(\kappa)) &= \frac{C_1(\kappa,\gamma,D) A + \kappa I D(\kappa) (((\kappa+1)I+\kappa M)D(\kappa) +I\gamma)}{A(A- ((\kappa+1)I+ \kappa M)D - I \gamma)}\\
&\leq \frac{8\kappa ((M+I)A + I((\kappa+1)I+\kappa M))D(\kappa)^2}{A^2}.
\end{align*}
Then, we obtain
\begin{equation*}\label{Equation:C2Formula} 
2\pi C_2(\kappa,\gamma,D)\frac{\exp(2\pi\kappa I/A)}{\kappa B} \leq 16 \pi \frac{(M+I)A + I((\kappa+1)I+\kappa M) \exp(2\pi \kappa I/A)}{BA^2} D(\kappa)^2. 
\end{equation*}

The assumption \eqref{D_cond} is thus satisfied using
\begin{gather*}
D(\kappa) \leq \frac{BA^2}{16\pi((M+I)A +I((\kappa+1)I+\kappa M))\exp(2\pi I/A)}.
\end{gather*}
\end{proof}

\section{Existence of periodic solutions}\label{sec:5}
\setcounter{equation}{0}
In this section, we prove the existence of a periodic solution to \eqref{A-1}, i.e., provide the proof of the second part of Theorem \ref{main}. To this end, we first show the positive invariance of $\mathcal C_\nu(\Delta,\tilde\Delta)$ based on the positive invariance of $\mathcal C_\nu^{(1)}(\Delta)$, which was shown in the previous section. Next, we construct a Poincar\'e map on a Poincar\'e section in $\mathcal C_\nu(\Delta,\tilde\Delta)$ that is Lipschitz continuous, allowing us to apply the Schauder fixed point theorem. This yields the existence of a periodic solution.

Throughout this section, we use two measurement functions. We denote by $\Pi_2(\eta_1,\eta_2)$  the set of couplings between two probability measures $\eta_1,\eta_2 \in\mathcal{P}_2(\mathbb{R})$  supported on $\mathbb{R}$ with finite second moment. Let $\mu$ be a solution of \eqref{A-1} with  initial datum $\mu_0\in\mathcal P_2^\nu(\mathbb R^2)$. For each $\omega_1\ne\omega_2\in\Omega$, there exists the (unique optimal) coupling $\pi_0\in\Pi_2(\mu_0(\cdot,\omega_1),\mu_0(\cdot,\omega_2))$ that minimizes the quadratic cost function $c(x,y)=|x-y|^2$. Using this, we set the following measurements:
\begin{align}\label{E5.1}
\begin{aligned}
\Lambda^{\mu_0}[\omega_1, \omega_2](t) &:= \bigg(\int_{\mathbb{R}^2} |X(t, x, \omega_1) - X(t,y, \omega_2)|^2 \pi_0 (dx, dy)\bigg)^{1/2},\\
\Gamma^{\mu_0}[\omega_1, \omega_2](t) &:= \frac{\Lambda^{\mu_0}[\omega_1, \omega_2](t)}{|\omega_1 - \omega_2|},
\end{aligned}
\end{align}
where $X$ denotes the flow generated by the characteristic ODE \eqref{char} based on the solution $\mu$. Note that $\Lambda$ evaluates the cost using a pushforward measure under the flow map. Specifically, given the map
\begin{align*}
    X(t,\cdot, \omega_1) \otimes X(t, \cdot, \omega_2) : \mathbb{R}^2 \rightarrow \mathbb{R}^2,\quad(x,y) \mapsto (X(t,x, \omega_1),X(t,y, \omega_2)),
\end{align*}
we define the pushforward measure
\[ \pi_t := X(t,\cdot, \omega_1) \otimes X(t, \cdot, \omega_2)_\sharp \pi_0. \]
Then, we have
\begin{align*}
    \Lambda^{\mu_0}[\omega_1,\omega_2](t)=\left(\int_{\mathbb R^2}|x-y|^2\pi_t(dx,dy)\right)^{1/2}.
\end{align*}
It is straightforward to verify that $\pi_0 \in \Pi_2(\mu_0(\cdot,\omega_1), \mu_0(\cdot,\omega_2))$ implies
\[\pi_t \in \Pi_2(\mu_t(\cdot,\omega_1), \mu_t(\cdot,\omega_2)),\quad\forall~t\ge0.\]
However, we remark that even though $\pi_0$ is an optimal transport plan, there is no guarantee that $\pi_t$ remains optimal.

\subsection{Regularity propagation in Wasserstein distance}
\label{sec:5.1}
In this subsection, we show the positive invariance of $\mathcal C_\nu(\Delta,\tilde\Delta)$ and provide preparatory estimates for the proof of item (ii) of \eqref{C_nu_def}. Under assumptions \eqref{param1} and \eqref{D_cond} the conclusions  of Proposition \ref{P4.1}, we use the change of variable \eqref{D-5}
\begin{align*}
    \tilde\Gamma^{\mu_0}[\omega_1,\omega_2](s)=\Gamma^{\mu_0}[\omega_1,\omega_2](t)\quad\mbox{with}\quad s=x_c(t).
\end{align*}
\begin{lemma} \label{L5.1}
Suppose that the parameters $\kappa,\gamma$ and $D$ satisfy \eqref{param1} and \eqref{D_cond}. Let $\Delta$ be the $2\pi$-periodic function given by Proposition \ref{P4.1} and  $\mu\in\mathcal C([0,\infty);\mathcal P_2(\mathbb R^2))$ be a solution of \eqref{A-1} with   initial datum $\mu_0\in\mathcal C_\nu^{(1)}(\Delta)$. The following assertions hold.
    \begin{enumerate}
        \item The time-evolution of $\Gamma$ satisfies
        \begin{align*}
            \frac{d\Gamma^{\mu_0}[\omega_1,\omega_2](t)}{dt} \leq \bigg(\partial_x F[\tilde{\mu}](x_c, \omega_c) + E_1(\kappa,\gamma,D)\bigg)\Gamma^\mu[\omega_1,\omega_2](t) + I,
        \end{align*}
        for all $t\ge0$ and $\omega_1,\omega_2\in\Omega$, where the constant $E_1(\kappa,\gamma,D)$ is defined in \eqref{const} and $\tilde\mu$ is defined in \eqref{Equation:LeaderDistribution}. 
        \vspace{0.2cm}
        \item The time-evolution of $\tilde\Gamma$ satisfies
        \begin{align*}
            \frac{d\tilde\Gamma^{\mu_0}[\omega_1,\omega_2](s)}{ds}&\le\frac{I}{A-\kappa(I+M)D-I\gamma}\\
            &\hspace{.5cm}+\left(\frac{\partial_sF[\delta_s\otimes\nu](s,\omega_c)}{F[\delta_s\otimes\nu](s,\omega_c)}+E_2(\kappa,\gamma,D)\right)\tilde\Gamma^{\mu_0}[\omega_1,\omega_2](s),
        \end{align*}
        for all $s\ge s_0$ and $\omega_1,\omega_2\in\Omega$, where the constant $E_2(\kappa,\gamma,D)$ is defined in \eqref{const}.
    \end{enumerate}
\end{lemma}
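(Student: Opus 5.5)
We will differentiate $\Lambda^2$ along the characteristic flow and linearize around the leader state $\tilde\mu_t=\delta_{x_c(t)}\otimes\nu$, exactly as in Lemma \ref{L4.3}, and then transfer to the $s$-variable as in Lemma \ref{L4.4}. Throughout, Proposition \ref{P4.1} gives $\diam_x(\Supp(\mu_t))\le D$ for all $t\ge0$. Consequently $|X(t,x,\omega_i)-x_c(t)|\le D$ on the supports, and $W_1(\mu_t,\tilde\mu_t)\le D$ by \eqref{L3.3.3.1}.

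For item (1), write $\Lambda=\Lambda^{\mu_0}[\omega_1,\omega_2](t)$ and $\delta(t,x,y):=X(t,x,\omega_1)-X(t,y,\omega_2)$. Then
\[
\frac{d}{dt}\Lambda^2 = 2\int \delta\,\big(F[\mu_t](X(t,x,\omega_1),\omega_1)-F[\mu_t](X(t,y,\omega_2),\omega_2)\big)\,\pi_0(dx,dy).
\]
We split the velocity difference into two parts. The first is the change in $\omega$ at a fixed point, which by $(\mathcal A_2)$ is at most $I|\omega_1-\omega_2|$. The second is the change in $x$ at fixed $\omega_2$, which by the mean value theorem equals $\partial_xF[\mu_t](\xi,\omega_2)\,\delta$ for some $\xi$ between the two points. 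We then replace $\partial_xF[\mu_t](\xi,\omega_2)$ by $\partial_xF[\tilde\mu_t](x_c,\omega_c)$, with errors controlled as follows:
\begin{itemize}
\item moving $\xi$ to $x_c$ costs $\|\partial_x^2F\|_\infty|\xi-x_c|\le\kappa I D$; done more carefully (distance of order $D$, with the factor $\tfrac12$ from Taylor expansion) this should produce the $\tfrac{3I}{2}$ coefficient in $E_1$;
\item moving $\omega_2$ to $\omega_c$ costs an amount of order $I\gamma$; this requires a bound on $\partial_\omega\partial_xF$ of size $\kappa I$, which I would read off $(\mathcal A_2)$ as intended by the $\kappa I\gamma$ term in $E_1$;
\item replacing $\mu_t$ by $\tilde\mu_t$ costs $\kappa M W_1(\mu_t,\tilde\mu_t)\le\kappa MD$ by $(\mathcal A_1)$.
\end{itemize}
Altogether this gives
\[
\frac{d}{dt}\Lambda^2\le 2\big(\partial_xF[\tilde\mu_t](x_c,\omega_c)+E_1\big)\Lambda^2+2I|\omega_1-\omega_2|\int|\delta|\,d\pi_0 .
\]
By Cauchy--Schwarz, $\int|\delta|\,d\pi_0\le\Lambda$. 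Dividing by $2\Lambda$ (handling $\Lambda=0$ through $\sqrt{\Lambda^2+\epsilon}$ and letting $\epsilon\to0$) and then by $|\omega_1-\omega_2|$ yields (1). A preliminary point is that $\Lambda$ is absolutely continuous. This follows from boundedness of the supports and Lipschitz dependence of $X$ on $t$, which justifies differentiating under the integral.

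For item (2), we use $s=x_c(t)$, so that $\frac{d\tilde\Gamma}{ds}=\dot\Gamma/\dot x_c$. By Lemma \ref{L4.2}, $\dot x_c=F[\tilde\mu_t](x_c,\omega_c)+r(t)$ with $|r|\le((\kappa+1)I+\kappa M)D+I\gamma$, and $\dot x_c\ge A-((\kappa+1)I+\kappa M)D-I\gamma>0$ by \eqref{param1}. We treat the two terms of (1) separately.
\begin{itemize}
\item The constant term gives $I/\dot x_c$, bounded by $I$ divided by the lower bound of $\dot x_c$.
\item For the linear term, write $\frac{\partial_xF}{\dot x_c}=\frac{\partial_xF}{F}-\frac{\partial_xF\,r}{F\,\dot x_c}$. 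Using $|\partial_xF|\le\kappa I$, $F\ge A$ from $(\mathcal A_4)$, and the bound on $r$, the correction is at most $\frac{\kappa I\,|r|}{A\,\dot x_c}$. This should match the last summand of $E_2$, while $E_1/\dot x_c$ matches the first part.
\end{itemize}
Finally, we identify $\partial_xF[\tilde\mu_t](x_c,\omega_c)/F[\tilde\mu_t](x_c,\omega_c)$ with $\partial_sF[\delta_s\otimes\nu](s,\omega_c)/F[\delta_s\otimes\nu](s,\omega_c)$, as in the proof of Proposition \ref{P4.1}.

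The main obstacle is the bookkeeping in item (1): the constants must reproduce exactly $E_1=\kappa(D(\tfrac{3I}{2}+M)+I\gamma)$, and we need a mixed-derivative bound not literally stated in $(\mathcal A_2)$. If that bound is unavailable, I would instead compare $F[\mu_t](\cdot,\omega_2)$ with $F[\mu_t](\cdot,\omega_1)$ on the combined linear term. The fallback is to accept slightly different constants, which suffice for the subsequent invariance argument.
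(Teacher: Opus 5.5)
You follow the paper's proof. It has the same $\frac12\frac{d}{dt}\Lambda^2$ computation and the same linearization around $\partial_xF[\tilde\mu_t](x_c,\omega_c)$. The error terms are an $\omega$-shift $I|\omega_1-\omega_2|$, an $x$-shift, and $\kappa MD$ for $\mu_t\to\tilde\mu_t$ via \eqref{L3.3.3.1}. Then come Cauchy--Schwarz and $d\tilde\Gamma/ds=\dot\Gamma/\dot x_c$, with $1/\dot x_c$ compared to $1/F$. The structure is sound; the gaps are in the constant bookkeeping.

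In (1) there is nothing to ``reproduce''. Your single mean-value step gives the coefficient $\kappa(D(I+M)+I\gamma)\le E_1$, and since $\Gamma\ge0$ that already implies (1). The paper's $\tfrac{3I}{2}$ comes from a cruder split: a second-order Taylor remainder $R^{(2)}_t$ (cost $\tfrac12\kappa ID$) plus a shift of $\partial_xF$ from $(X_2,\omega_2)$ to $(x_c,\omega_c)$ in $R^{(3)}_t$ (cost $\kappa I(D+\gamma)$). That shift silently uses $\|\partial^2_{x\omega}F\|_\infty\le\kappa I$, which is stated only later in $(\mathcal A_6)$. So on this point you are at the paper's level.

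Your fallback, however, does not work as described. With only $\|\partial_xF\|_\infty\le\kappa I$, the $\omega_2\to\omega_c$ shift costs $2\kappa I$. That is an $O(\kappa)$ term whose size relative to $\kappa B$ the framework does not control, so \eqref{Equation:LipschitzRegularityAssumption} could fail. A workable alternative is to interpolate, via difference quotients, between $\|\partial_x^2F\|_\infty\le\kappa I$ and $\|\partial_\omega F\|_\infty\le I$. This gives $2I\sqrt{2\kappa\gamma}$, which suffices when $\gamma\le c\kappa$ with $c$ small, at the price of changing $E_1$.

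In (2), Lemma \ref{L4.2} as stated only gives $|r|\le((\kappa+1)I+\kappa M)D+I\gamma$. Your constant term is then $I/(A-((\kappa+1)I+\kappa M)D-I\gamma)$, which is larger than the stated $I/(A-\kappa(I+M)D-I\gamma)$. Your correction term likewise carries $((\kappa+1)I+\kappa M)D$ where $E_2$ has $\kappa(I+M)D$, so it does not ``match''. The paper's own final line has the same mismatch.

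The missing step is a sharper version of Lemma \ref{L4.2}. Since $\mu_t$ and $\tilde\mu_t$ have the same $\omega$-marginal, couple them in $\mathcal I_{11}$ through $(x,\omega)\mapsto(x_c(t),\omega)$. Then only the $x$-Lipschitz constant enters, and $|\mathcal I_{11}|\le\kappa I\int|x-x_c|\,d\mu_t\le\kappa ID$. This gives $|r|\le\kappa(I+M)D+I\gamma$ and $\dot x_c\ge A-\kappa(I+M)D-I\gamma$. These produce exactly the stated constant term and the last summand of $E_2$. The remaining term $E_1/\dot x_c$ is bounded by $E_1/(A-((\kappa+1)I+\kappa M)D-I\gamma)$.
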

\begin{proof}[Proof of item (1)]
For fixed $\omega_1,\omega_2\in\Omega$, we use the simplified notation
\[\Lambda(t)=\Lambda^{\mu_0}[\omega_1,\omega_2](t).\]
By the definition of $\Lambda$ in \eqref{E5.1}, one has
\begin{equation}\label{L4.1-1}
\frac{1}{2}\frac{d\Lambda^2(t)}{dt} = \int_{\mathbb R^2} \Big( X(t,x,\omega_1) - X(t,y,\omega_2) \Big) \left( \frac{d}{dt}X(t,x,\omega_1) - \frac{d}{dt}X(t,y,\omega_2) \right) \, \pi_0(dx,dy).
\end{equation}
For simplicity, we set
\[ 
X_1= X(t,x,\omega_1), \quad X_2 = X(t,y,\omega_2)\quad \mbox{and}\quad x_c = x_c[\mu_t].
\]
Then for fixed $x,y\in\mathbb R$ we have
\begin{equation}\label{L4.1-2}
\frac{dX_1}{dt} - \frac{dX_2}{dt} = F[\mu_t](X_1,\omega_1) - F[\mu_t](X_2,\omega_2) = \partial_x F[\tilde \mu_t](x_c,\omega_c) (X_1-X_2) + R_t,
\end{equation}
where the reminder term $R_t$ can split into four terms:
\begin{align}
\begin{aligned} \label{E-2}
R_t &=   F[\mu_t](X_1,\omega_1) - F[\mu_t](X_1, \omega_2)  \\
&\hspace{.4cm}+  F[\mu_t](X_1,\omega_2) - F[\mu_t](X_2,\omega_2)  - \partial_x F[\mu_t](X_2,\omega_2)  (X_1-X_2)\\
&\hspace{.4cm}+ ( \partial_x F[\mu_t](X_2,\omega_2) - \partial_x F[\mu_t](x_c,\omega_c) ) (X_1-X_2)  \\
&\hspace{.4cm}+ ( \partial_x F[\mu_t](x_c,\omega_c) - \partial_x F[\tilde \mu_t](x_c,\omega_c) )(X_1-X_2)\\
&=: \sum_{i=1}^4 R_t^{(i)}.
\end{aligned}
\end{align}
Now, we estimate $R_t^{(i)}$ for $i=1,2,3,4$ one by one. \newline

\noindent $\bullet$~Case A (Estimate of $R^{(1)}_t$):~By $(\mathcal{A}_2)$ with Taylor expansion, the first term is bounded by
\begin{equation} \label{E-3}
|R^{(1)}_t| \leq I |\omega_1 - \omega_2|.
\end{equation}
\noindent $\bullet$~Case B (Estimate of $R^{(2)}_t$):~Note that there exists some $x_*(t)$ between $X_1(t)$ and $X_2(t)$ such that
\begin{equation*}
R^{(2)}_t =\frac{1}{2} \partial_{x}^2 F[\mu_t](x_*, \omega_2) (X_1-X_2)^2.
\end{equation*}
Then, it follows from $(\mathcal{A}_2)$ and Proposition \ref{P4.1} that 
\begin{equation} \label{E-4}
\big|R^{(2)}_t\big| \leq \frac{1}{2}\kappa I D |X_1-X_2|.
\end{equation}
\noindent $\bullet$~Case C (Estimate of $R^{(3)}_t$):~By $(\mathcal{A}_2)$ and Proposition \ref{P4.1}, we have
\begin{align} 
\begin{aligned} \label{E-5}
\big|R^{(3)}_t\big| &\leq \kappa I \big( | X_1(t)-x_c(t)| + |\omega_1-\omega_c| \big) |X_1-X_2| \leq \kappa I (D+\gamma) |X_1-X_2|.
\end{aligned}
\end{align}
\noindent $\bullet$~Case D (Estimate of $R^{(4)}_t$):~Again, we use $(\mathcal{A}_1)$ to find 
\begin{equation} \label{E-6}
\big|R^{(4)}_t\big| \leq \kappa M W_1(\tilde \mu_t, \mu_t) |X_1-X_2| \leq \kappa MD |X_1-X_2|,
\end{equation}
where we used the bounded dispersion to estimate the 1-Wasserstein distance obtained in \eqref{L3.3.3.1}. In \eqref{E-2}, we combine all the estimates \eqref{E-3}, \eqref{E-4}, \eqref{E-5} and \eqref{E-6} to find 
\begin{align*}
    |R_t|\le I|\omega_1-\omega_2|+\kappa\left(D\left(\frac{3I}{2}+M\right)+I\gamma\right)|X_1-X_2|
\end{align*}
Thus, it follows from \eqref{L4.1-1} - \eqref{L4.1-2}, remembering that $X_1$ and  $X_2$ depend respectively on $x$ and $y$, that for fixed $t$,
\begin{align*}
    \frac{1}{2}\frac{d\Lambda^2(t)}{dt} &= \int_{\mathbb{R}^2}  (X_1-X_2)  \Big( \partial_x F[\tilde{\mu_t}](x_c, \omega_c)(X_1-X_2) + R_t \Big) \pi_0(dx,dy) \\
    &\leq \partial_x F[\tilde{\mu_t}](x_c, \omega_c) \Lambda^2 + \kappa \left(D\left(\frac{3I}{2} +  M\right) +  I\gamma\right)\Lambda^2 \\
    &\quad+  I \int_{\mathbb{R}^2}|X_1-X_2|\cdot |\omega_1 - \omega_2|\pi_0(dx,dy)\\
    &\leq \partial_x F[\tilde{\mu_t}](x_c, \omega_c) \Lambda^2 + \kappa\left(D\left(\frac{3I}{2}+M\right)+I\gamma\right)\Lambda^2 + I|\omega_1 - \omega_2| \Lambda,
\end{align*}
where we use H\"older's inequality in the last estimate. Therefore, we have 
\[
\frac{d\Lambda}{dt}\leq \partial_x F[\tilde{\mu}](x_c, \omega_c) \Lambda + \kappa\left(D\left(\frac{3I}{2}+M\right)+I\gamma\right)\Lambda + I|\omega_1 - \omega_2|.
\]
We divide the above relation by $|\omega_1 - \omega_2|$ to obtain
\begin{gather*}
\frac{d\Gamma}{dt} \leq \bigg(\partial_x F[\tilde{\mu}](x_c, \omega_c) + E_1(\kappa,\gamma,D)\bigg)\Gamma + I.
\end{gather*}

\vspace{0.2cm}

\noindent{\it Proof of item (2).} For fixed $\omega_1,\omega_2\in\Omega$, we use the simplified notation
\[\tilde\Gamma(s)=\tilde\Gamma^{\mu_0}[\omega_1,\omega_2](s).\]
From (1) and Lemma \ref{L4.1}, we estimate as follows:
\begin{align*}
    \begin{aligned}
        \frac{d\tilde \Gamma(s)}{ds} &= \frac{d \Gamma(t)}{dt}\frac{dt}{ds}\\
        &\leq \frac{\partial_x F[\tilde{\mu}_t](x_c, \omega_c)}{dx_c/dt}\Gamma(t)  + \kappa\left(D\left(\frac{3I}{2}+M\right)+I\gamma\right)\frac{\Gamma(t)}{dx_c/dt} + \frac{I}{dx_c/dt}\\
        &=\frac{\partial_sF[\delta_s \otimes \nu](s,\omega_c)}{F[\delta_s \otimes \nu](s,\omega_c)} \tilde{\Gamma}(s)  +\kappa\left(D\left(\frac{3I}{2}+M\right)+I\gamma\right)\frac{\tilde\Gamma(s)}{dx_c/dt} + \frac{I}{dx_c/dt} \\
        &\hspace{0.4cm} + \partial_sF[\delta_s \otimes \nu](s,\omega_c)\bigg(\frac{1}{dx_c/dt} - \frac{1}{F[\delta_s \otimes \nu](s,\omega_c)} \bigg) \tilde{\Gamma}(s),
    \end{aligned}
\end{align*}
where we used 
\[ \tilde{\mu}_t = \delta_{x_c(t)}\otimes \nu = \delta_s \otimes \nu. \] 
By $(\mathcal{A}_2)$, $(\mathcal{A}_4)$ and Lemma \ref{L4.2}, i.e.,
\begin{align*}
    \frac{dx_c(t)}{dt}\geq A-((\kappa+1)I+\kappa M)D-I\gamma,
\end{align*}
we have 
\begin{align*}
    \begin{aligned}
        \partial_sF[\delta_s \otimes \nu](s,\omega_c)\left(\frac{1}{dx_c/dt} - \frac{1}{F[\delta_s \otimes \nu](s,\omega_c)} \right)\leq \frac{\kappa I\big(\kappa(I+M)D+I\gamma\big)}{A\big(A-((\kappa+1)I+\kappa M)D-I\gamma\big)}.
    \end{aligned}
\end{align*}
Therefore, we have
\begin{align*}
    \frac{d\tilde\Gamma(s)}{ds}&\leq\left(\frac{\partial_sF[\delta_s\otimes\nu](s,\omega_c)}{F[\delta_s\otimes\nu](s,\omega_c)}+E_2(\kappa,\gamma,D)\right)\tilde\Gamma(s)+\frac{I}{A-((\kappa+1)I+\kappa M)D-I\gamma},
\end{align*}
which is the desired result.
\end{proof}
In the following proposition, we show the set $\mathcal C_\nu(\Delta,\tilde\Delta)=\mathcal C_\nu^{(1)}(\Delta)\cap\mathcal C_\nu^{(2)}(\tilde\Delta)$ (see \eqref{C_sub}) is positively invariant under \eqref{A-1}.
\begin{proposition}\label{P5.1}
Suppose that the parameters $\kappa,\gamma$, $D$ satisfy \eqref{param1}, \eqref{D_cond}, and \eqref{Equation:LipschitzRegularityAssumption}. Let $\Delta$ be the $2\pi$-periodic function given by Proposition \ref{P4.1}. Then there exists a $2\pi$-periodic function $\tilde \Delta : \mathbb{R} \to (0,+\infty)$ so that the set $\mathcal C_\nu(\Delta,\tilde\Delta)$ is positively invariant under \eqref{A-1}.
\end{proposition}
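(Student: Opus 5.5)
The plan mirrors the proof of Proposition \ref{P4.1}, with Lemma \ref{L5.1}(2) playing the role of Lemma \ref{L4.4}. Assumption \eqref{Equation:LipschitzRegularityAssumption} is presumably a smallness condition ensuring that the perturbed coefficient
\[
\tilde\beta(s):=\frac{\partial_sF[\delta_s\otimes\nu](s,\omega_c)}{F[\delta_s\otimes\nu](s,\omega_c)}+E_2(\kappa,\gamma,D)
\]
still satisfies \eqref{B-11}. Concretely, I would take it to be $2\pi E_2(\kappa,\gamma,D)<\kappa B$, which is achievable since $E_2=\mathcal O(\kappa(D+\gamma))$ and $D$ can be chosen small. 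Then $\tilde\beta$ is $2\pi$-periodic, and by ($\mathcal A_5$)
\[
\int_0^{2\pi}\tilde\beta\,ds\le-\kappa B+2\pi E_2<0.
\]
Set $\tilde\alpha:=I/(A-((\kappa+1)I+\kappa M)D-I\gamma)>0$, which is positive by \eqref{param1}. Lemma \ref{L2.2} then gives a unique positive $2\pi$-periodic solution $\tilde\Delta$ of $y'=\tilde\alpha+\tilde\beta(s)y$. This $\tilde\Delta$ is the function in the statement. Lemma \ref{L2.2}(3) gives $\tilde\Delta=\mathcal O(\kappa^{-1})$, consistent with Theorem \ref{main}.

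Next comes the invariance. Let $\mu_0\in\mathcal C_\nu(\Delta,\tilde\Delta)$ and let $\mu$ be the solution. By Proposition \ref{P4.1}, $\mu_t\in\mathcal C^{(1)}_\nu(\Delta)$ for all $t\ge0$. Hence the a priori bound $\diam_x\le D$ holds globally, $x_c$ is strictly increasing, and the change of variable $s=x_c(t)$ is valid on $[s_0,\infty)$. Fix $\omega_1\neq\omega_2$ in $\Omega$ and let $\pi_0$ be the optimal plan between $\mu_0(\cdot,\omega_1)$ and $\mu_0(\cdot,\omega_2)$. Then
\[
\tilde\Gamma(s_0)=\frac{W_2(\mu_0(\cdot,\omega_1),\mu_0(\cdot,\omega_2))}{|\omega_1-\omega_2|}\le\tilde\Delta(s_0).
\]
Lemma \ref{L5.1}(2) gives $\tilde\Gamma'\le\tilde\alpha+\tilde\beta\tilde\Gamma$, while $\tilde\Delta'=\tilde\alpha+\tilde\beta\tilde\Delta$. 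The difference $w=\tilde\Gamma-\tilde\Delta$ therefore satisfies $w'\le\tilde\beta w$ with $w(s_0)\le0$, so Grönwall gives $w\le0$ for all $s\ge s_0$. No bootstrap is needed here, since the a priori dispersion bound is already global.

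To conclude, recall that $\pi_t=(X(t,\cdot,\omega_1)\otimes X(t,\cdot,\omega_2))_\sharp\pi_0$ is a coupling of $\mu_t(\cdot,\omega_1)$ and $\mu_t(\cdot,\omega_2)$. This uses the representation $\mu_t=(X(t)\otimes\Id)_\sharp\mu_0$ and the uniqueness of disintegrations, which give $\mu_t(\cdot,\omega)=X(t,\cdot,\omega)_\sharp\mu_0(\cdot,\omega)$. It follows that
\[
W_2(\mu_t(\cdot,\omega_1),\mu_t(\cdot,\omega_2))\le\Lambda(t)=|\omega_1-\omega_2|\,\tilde\Gamma(x_c(t))\le\tilde\Delta(x_c[\mu_t])\,|\omega_1-\omega_2|.
\]
Hence $\mu_t\in\mathcal C^{(2)}_\nu(\tilde\Delta)$, and together with Proposition \ref{P4.1} we get $\mu_t\in\mathcal C_\nu(\Delta,\tilde\Delta)$.

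I expect the main obstacle to be the measure-theoretic justification that the disintegration identity holds for every $\omega$, and not merely $\nu$-a.e. The cleanest route is to define $\mu_t(\cdot,\omega):=X(t,\cdot,\omega)_\sharp\mu_0(\cdot,\omega)$ for all $\omega\in\Omega$, which is a legitimate version of the conditional measures. A secondary point is the differentiability of $\Lambda$ used in Lemma \ref{L5.1}: where $\Lambda=0$, one works with $\sqrt{\Lambda^2+\varepsilon}$ and lets $\varepsilon\to0$. Finally, one must check that \eqref{Equation:LipschitzRegularityAssumption} indeed encodes $2\pi E_2<\kappa B$; if it is stated differently, the only property needed is the negativity of $\int_0^{2\pi}\tilde\beta$.
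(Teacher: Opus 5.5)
Your proposal is correct and follows the paper's proof essentially step by step. The steps are the same: positive invariance of $\mathcal C^{(1)}_\nu(\Delta)$ from Proposition \ref{P4.1}, the differential inequality of Lemma \ref{L5.1}(2), $\tilde\Delta$ chosen as the periodic solution given by Lemma \ref{L2.2}, comparison starting from $\tilde\Gamma(s_0)\le\tilde\Delta(s_0)$, and the coupling bound $W_2(\mu_t(\cdot,\omega_1),\mu_t(\cdot,\omega_2))\le|\omega_1-\omega_2|\,\tilde\Gamma(x_c[\mu_t])$.

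Your suspicion about \eqref{Equation:LipschitzRegularityAssumption} is justified. The paper states it as $E_2(\kappa,\gamma,D)<\kappa B$, but this only yields $\int_0^{2\pi}\tilde\beta\le-\kappa B+2\pi E_2$, which need not be negative. Your condition $2\pi E_2<\kappa B$ is the one the argument actually needs.
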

\begin{proof}
Let $\kappa,\gamma,D$ satisfying the assumptions \eqref{param1}, \eqref{D_cond},  \eqref{Equation:LipschitzRegularityAssumption} and $\Delta$ be the $2\pi$-periodic function given by Proposition \ref{P4.1}. As $\mathcal C_\nu(\Delta,\tilde\Delta) =\mathcal C_\nu^{(1)}(\Delta) \cap \mathcal C_\nu^{(2)}(\tilde\Delta)$ and the positive invariance of $\mathcal C_\nu^{(1)}(\Delta)$ has been shown in Proposition \ref{P4.1}, we aim to construct $\tilde \Delta$ and show the positive invariance of $\mathcal C_\nu^{(2)}(\tilde\Delta)$. Therefore, it remains to show  for all $\omega_1\ne\omega_2\in\Omega$, that 
    \begin{align*}
    W_2(\mu_t(\cdot,\omega_1),\mu_t(\cdot,\omega_2))\le\tilde\Delta(x_c[\mu_t])|\omega_1-\omega_2|,\quad\forall~t>0,
    \end{align*}
provided the initial datum $\mu_0\in\mathcal C_\nu^{(1)}(\Delta)$ satisfies
    \begin{align*}
        W_2(\mu_0(\cdot,\omega_1),\mu_0(\cdot,\omega_2))\le\tilde\Delta(x_c[\mu_0])|\omega_1-\omega_2|,
    \end{align*}
From the result in Lemma \ref{L5.1}, we consider the following differential inequality
    \begin{align*}
        \frac{d\tilde\Gamma^{\mu_0}[\omega_1,\omega_2](s)}{ds}\le\tilde\alpha+\tilde\beta(s)\tilde\Gamma^{\mu_0}[\omega_1,\omega_2](s),
    \end{align*}
    where we set
    \begin{align*}
        \tilde\alpha:=\frac{I}{A-\kappa(I+M)D-I\gamma}\quad\mbox{and}\quad\tilde\beta(s):=\frac{\partial_sF[\delta_s\otimes\nu](s,\omega_c)}{F[\delta_s\otimes\nu](s,\omega_c)}+E_2(\kappa,\gamma,D).
    \end{align*}
To unsure that $\int_0^{2\pi}\tilde \beta(s)  \, ds < 0$, we assume
\begin{gather}
E_2(\kappa,\gamma,D)  < \kappa B. \label{Equation:LipschitzRegularityAssumption}
\end{gather}
Let $\tilde\Delta(s)$ be the unique $2\pi$-periodic solution of
    \begin{align*}
        y(s)=\tilde\alpha+\tilde\beta(s)y(s),
    \end{align*}
which is given explicitly in Lemma \ref{L2.2} by the formula \eqref{B-14-2} 
    \begin{align*}
        \tilde\Delta(s)=\frac{\tilde\alpha\int_{s_0}^{s_0+2\pi}\exp\left(\int_{\tilde s}^{s_0}\beta(\tau)d\tau\right)d\tilde s}{\exp\left(-\int_0^{2\pi}\beta(\tilde s)d\tilde s\right)-1},\quad\forall~s\ge s_0=x_c(\mu_0).
    \end{align*}
The comparison principle implies
    \begin{align}\label{tilGD}
        \tilde\Gamma^{\mu_0}[\omega_1,\omega_2](s)\le\tilde\Delta(s),\quad\forall~s\ge s_0,~~\forall~\omega_1,\omega_2\in\Omega,
    \end{align}

    By the definition of $\tilde\Gamma^{\mu_0}[\omega_1,\omega_2]$,
    one has
    \begin{align}\label{W_Gamma}
        \frac{W_2(\mu_t(\cdot,\omega_1),\mu_t(\cdot,\omega_2))}{|\omega_1-\omega_2|}\le\tilde\Gamma^{\mu_0}[\omega_1,\omega_2](x_c[\mu_t]),\quad\forall~\omega_1\ne\omega_2\in\Omega.
    \end{align}
    Therefore, we combine \eqref{tilGD} and \eqref{W_Gamma} to obtain
    \begin{align*}
        \frac{W_2(\mu_t(\cdot,\omega_1),\mu_t(\cdot,\omega_2))}{|\omega_1-\omega_2|}\le\tilde\Delta(s)=\tilde\Delta(x_c[\mu_t]),\quad\forall~t\ge0,~~\forall~\omega_1\ne\omega_2\in\Omega,
    \end{align*}
    which is equivalent to the positive invariance of $\mathcal C_\nu^{(2)}(\tilde\Delta)$.
\end{proof}

\begin{remark} \label{Remark:FeasibilityBis}
We check that one can  find a non empty open set of parameters of the form
\begin{gather*}
 \big\{ (\kappa,\gamma) : \kappa \in (0,1), \ \gamma \in (0,\tilde\gamma(\kappa)) \big\}
\end{gather*} 
so that \eqref{param1}, \eqref{D_cond}, and \eqref{Equation:LipschitzRegularityAssumption} admit a solution in $D$. Moreover $\tilde\gamma(\kappa) = \mathcal{O}(\kappa)$ and $\|\tilde\Delta\|_\infty = \mathcal{O}(\kappa^{-1})$.
\end{remark}

\begin{proof}
Let
\begin{gather*}
\tilde D(\kappa) := \min \Big( D(\kappa), \frac{BA}{8(\frac{3I}{A}+M +\frac{\kappa I}{A}(I+M))}\Big)
\end{gather*}
where $D(\kappa)$ has been defined  \eqref{Equation:DispersionFormula} and 
\begin{gather*}
\tilde\gamma(\kappa) := \min \Big( \gamma(\kappa), \frac{(\frac{3I}{A}+M +\frac{\kappa I}{A}(I+M))\tilde D(\kappa)}{1+\frac{I}{A}} \Big)
\end{gather*}
where $\gamma(\kappa)$ is defined similarly as in \eqref{Equation:GammaFormula} with $D(\kappa)$ replaced by $\tilde D(\kappa)$. Then \eqref{Equation:LipschitzRegularityAssumption} is satisfied thanks to
\begin{gather*}
E_2(\kappa,\gamma,\tilde D(\kappa)) \leq \frac{4\kappa (\frac{3I}{A}+M +\frac{\kappa I}{A}(I+M))\tilde D(\kappa)}{A} \leq \frac{\kappa B}{2}.
\end{gather*}
In particular 
\begin{gather*}
\tilde \Delta(s) \leq \frac{8\pi \exp(2\pi \kappa(\frac{I}{A}+B))}{\kappa AB}
\end{gather*}
\end{proof}

In the following lemma, we study a regularity with respect to the initial conditions for the metrics $d_1$ and  $d_2$. As in \eqref{distance}, we define $d_1$ by the distance between two measures using their disintegrations: For every $\mu,\mu'\in \mathcal{P}_2(\mathbb{R} \times \Omega)$, we set
\begin{gather*}
d_1(\mu,\mu') := \int W_1(\mu(\cdot,\omega), \mu'(\cdot,\omega)) \, \nu(d\omega).
\end{gather*}
By taking a Markov kernel $p : \mathcal{B}(\mathbb{R}^2) \times \Omega \to [0,1]$ so that $p(dx,dx',\omega) \in \mathcal{P}(\mathbb{R}^2)$ is an optimal coupling between $\mu(\cdot,\omega)$ and $\mu'(\cdot,\omega)$ for a.e. $\omega \in\Omega$ for the $1$-Wasserstein distance, one obtains a coupling $\pi \in\mathcal{P}_2^\nu(\mathbb{R}^2 \times \mathbb{R}^2)$ between $\mu$ and $\mu'$ defined by the formula
\begin{gather*}
\iint \varphi(x,\omega) \psi(x',\omega') \, p(dx,d\omega,dx',d\omega') := \int  \Big( \int \varphi(x,\omega) \psi(x',\omega) \, p(dx,dx',\omega) \Big) \ \nu(d\omega)
\end{gather*}
one gets the estimate
\[
W_1(\mu, \mu') \leq d_1(\mu,\mu').
\]

\begin{lemma} \label{L5.2}
Suppose that $\mu, \mu' \in \mathcal C([0,\infty);\mathcal{P}_2^\nu(\mathbb R^2))$ be global measure-valued solutions to \eqref{A-1} with initial data in $\mathcal P_2^\nu(\mathbb R^2)$. Then, the following assertions hold.
\begin{enumerate}
\item The $d_1$-distance between $\mu$ and $\mu'$ satisfies the local-in-time stability estimate:
\[
d_1(\mu_t,\mu'_t) \leq d_1(\mu_0,\mu'_0) e^{\kappa(M+I)t}, \quad\forall~ t \ge 0.
\]
In particular, for barycenters 
\[ x_c(t) := \int_{\mathbb{R} \times \Omega} x \mu_t(dx,d\omega) \quad \mbox{and} \quad x'_c(t) := \int_{\mathbb{R} \times \Omega} x \mu'_t(dx, d\omega), \]
we have
\[
|x_c(t) - x'_c(t)| \leq |x_c(0) -x'_c(0)| + d_1(\mu_0,\mu'_0) \left( e^{\kappa(M+I)t} - 1 \right), \quad \forall~t \ge 0.
\]
\item The $d_2$-distance between $\mu$ and $\mu'$ satisfies the local-in-time stability estimate:
\[
d_2(\mu_t,\mu'_t) \leq d_2(\mu_0,\mu'_0) e^{\kappa(M+I)t}, \quad \forall~t \ge 0.
\]
\end{enumerate}
\end{lemma}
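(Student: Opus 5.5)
The plan is to compare the two solutions fiber by fiber in $\omega$ through their characteristic flows, and to close both estimates by a Gr\"onwall argument. Let $X(t,x,\omega)$ and $X'(t,x,\omega)$ be the characteristic flows \eqref{char} generated by $\mu$ and $\mu'$. We have the representations $\mu_t=(X(t)\otimes\Id)_\sharp\mu_0$ and $\mu'_t=(X'(t)\otimes\Id)_\sharp\mu'_0$. Since the $\omega$-marginal $\nu$ is preserved, the disintegrations satisfy $\mu_t(\cdot,\omega)=X(t,\cdot,\omega)_\sharp\mu_0(\cdot,\omega)$ for $\nu$-a.e.\ $\omega$, and likewise for $\mu'$.

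For each $\omega$ I take the monotone (quantile) coupling $p_0(dx,dx',\omega)=(F^{-1}_{\mu_0}\otimes F^{-1}_{\mu'_0})(\cdot,\omega)_\sharp\mathcal L$. On $\mathbb{R}$ this coupling is optimal simultaneously for $W_1$ and $W_2$. It is also measurable in $\omega$, which settles the measurability issue in the construction preceding the lemma. I push it forward by $X(t,\cdot,\omega)\otimes X'(t,\cdot,\omega)$ and set
\[
\Phi(t):=\int_\Omega\int_{\mathbb R^2}|X(t,x,\omega)-X'(t,x',\omega)|\,p_0(dx,dx',\omega)\,\nu(d\omega).
\]
The pushed-forward coupling is admissible between $\mu_t(\cdot,\omega)$ and $\mu'_t(\cdot,\omega)$, so $d_1(\mu_t,\mu'_t)\le\Phi(t)$ and $\Phi(0)=d_1(\mu_0,\mu'_0)$.

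\textbf{Item (1).} Along a pair of characteristics I split
\[
|F[\mu_t](X,\omega)-F[\mu'_t](X',\omega)|\le|F[\mu_t](X,\omega)-F[\mu_t](X',\omega)|+|F[\mu_t](X',\omega)-F[\mu'_t](X',\omega)|.
\]
By $(\mathcal A_2)$ the first term is at most $\kappa I|X-X'|$. By $(\mathcal A_1)$ the second is at most $\kappa M W_1(\mu_t,\mu'_t)\le\kappa M d_1(\mu_t,\mu'_t)\le\kappa M\Phi(t)$, using the inequality $W_1\le d_1$ recorded just before the lemma. I use the integral form $|X-X'|(t)\le|x-x'|+\int_0^t|\dot X-\dot X'|$ to avoid differentiating the absolute value. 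Integrating against $p_0\,\nu$ then gives $\Phi(t)\le\Phi(0)+\kappa(I+M)\int_0^t\Phi$, and Gr\"onwall yields the $d_1$ bound.

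For the barycenters I write
\[
x_c(t)-x'_c(t)=x_c(0)-x'_c(0)+\int_0^t\!\!\int\!\!\int\big(F[\mu_\tau](X,\omega)-F[\mu'_\tau](X',\omega)\big)\,p_0\,\nu\,d\tau.
\]
The same splitting bounds the integrand by $\kappa(I+M)\Phi(\tau)\le\kappa(I+M)d_1(\mu_0,\mu'_0)e^{\kappa(M+I)\tau}$. Integrating in $\tau$ gives exactly the factor $d_1(\mu_0,\mu'_0)(e^{\kappa(M+I)t}-1)$.

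\textbf{Item (2).} I set $\Psi(t)^2:=\int\!\int|X-X'|^2\,p_0\,\nu$, so that $\Psi(0)=d_2(\mu_0,\mu'_0)$ and $d_2(\mu_t,\mu'_t)\le\Psi(t)$. Differentiating $\tfrac12\Psi^2$ and using the same splitting gives
\[
\tfrac12\tfrac{d}{dt}\Psi^2\le\kappa I\Psi^2+\kappa M\,W_1(\mu_t,\mu'_t)\int\!\!\int|X-X'|\,p_0\,\nu.
\]
By Cauchy--Schwarz, $\int\!\int|X-X'|\,p_0\,\nu\le\Psi$ and $W_1(\mu_t,\mu'_t)\le d_1(\mu_t,\mu'_t)\le\Psi$. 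Hence $\Psi'\le\kappa(I+M)\Psi$ wherever $\Psi>0$, and Gr\"onwall concludes.

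\textbf{Main obstacle.} The hard part is not the estimates but the justifications around them. Differentiation under the integral needs dominated convergence; this follows from the flows being $C^1$ in $t$, with bounded velocity differences by $(\mathcal A_2)$ on the finite second-moment class. The identification of $\mu_t(\cdot,\omega)$ with the fiberwise push-forward must hold for $\nu$-a.e.\ $\omega$. For the latter I would invoke uniqueness in the disintegration theorem (Theorem \ref{thm_dis}) applied to $(X(t)\otimes\Id)_\sharp\mu_0$.
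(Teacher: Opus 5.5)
Your proof is correct. Item (2) is essentially the paper's argument: the paper also pushes a fiberwise $W_2$-optimal coupling forward along the two characteristic flows and differentiates $\Lambda^2$. It splits the velocity difference into an $(\mathcal A_1)$ piece and an $(\mathcal A_2)$ piece, and closes with Young's inequality and $W_1(\mu_t,\mu'_t)\le W_2(\mu_t,\mu'_t)\le\Lambda(t)$, where you use Cauchy--Schwarz and $W_1\le d_1\le\Phi\le\Psi$.

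The genuine difference is item (1). The paper works in Eulerian form: it inserts a $1$-Lipschitz test function $\varphi$ into the fiberwise weak formulation \eqref{E5.2.1} and takes the supremum by Kantorovich--Rubinstein duality. You instead reuse the Lagrangian coupling, taking the monotone (quantile) coupling, which on $\mathbb R$ is optimal for $W_1$ and $W_2$ at once and is explicitly measurable in $\omega$. Your route is the sturdier one.

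In \eqref{E5.2.2} the flux difference $\int\partial_x\varphi\,F[\mu_s]\,\mu_s(dx,\omega)-\int\partial_x\varphi\,F[\mu'_s]\,\mu'_s(dx,\omega)$ is bounded as if the factor $\partial_x\varphi$ were absent. Keeping the factor does not rescue the step, because $\partial_x\varphi\,F[\mu'_s]$ is not uniformly Lipschitz and so is not admissible in the duality. In fact the displayed inequality can fail. Take a $\mu$-independent field and the fibers $\tfrac12(\delta_{y-\varepsilon}+\delta_{y+\varepsilon})$ and $\delta_y$. Their mean velocities agree up to $O(\varepsilon^2)$. Yet at a point with $\partial_xF(y)>0$ their $W_1$ distance grows from $\varepsilon$ at rate $\varepsilon\,\partial_xF(y)+O(\varepsilon^2)$, which contradicts \eqref{E5.2.2} for small $\varepsilon$ and $t$ after taking the supremum over $\varphi$.

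Your characteristic coupling avoids this. The price is reliance on $\mu_t=(X(t)\otimes\Id)_\sharp\mu_0$ and on the fiberwise identification of disintegrations, both of which you flag correctly. Your approach also gives the barycenter bound directly from the Gr\"onwall estimate on $\Phi$. The paper's barycenter step, via Lemma \ref{L4.1} and the $\omega$-integrated flux bound, is fine, since there the test function $x$ has constant derivative. Finally, the explicit quantile coupling settles the measurable-selection question that the paper leaves implicit when it introduces $\pi_0$ in item (2).
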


\begin{proof}
(1) We choose a test function $\varphi\in C_c^1(\mathbb R)$ such that $\mbox{Lip}(\varphi)\le1$ and consider the weak formulation of \eqref{A-1}
\begin{align}\label{E5.2.1}
\begin{aligned} 
\int_\mathbb{R}  \varphi(x)\, \mu_t(dx,\omega)  &= \int_\mathbb{R}  \varphi(x)\, \mu_0(dx,\omega) + \int_0^t  \Big(\int_\mathbb{R}  \frac{d\varphi}{dx}(x,\omega) F[\mu_s](x,\omega)\, \mu_s(dx,\omega) \Big)ds,
\end{aligned}
\end{align}
for $\nu$-almost $\omega\in\Omega$. Recall the Monge-Kantorovich duality of 1-Wasserstein distance on $\mathbb R$
\begin{gather*}
W_1(\eta,\eta') = \sup \left\{ \int_{\mathbb R} \phi(x) \, \eta(dx) - \int_{\mathbb R} \phi(x) \, \eta'(dx) ~:~\phi \in C^1_c(\mathbb{R})~~\mbox{with}~~ \ \mbox{Lip}(\phi) \leq 1 \right\}.
\end{gather*}
We use the form \eqref{E5.2.1} and the duality to obtain
\begin{align}\label{E5.2.2}
\begin{aligned}
&\int_{\mathbb R} \varphi(x)\, \mu_t(dx,\omega) - \int_{\mathbb R} \varphi(x)\, \mu'_t(dx,\omega) \\
& \hspace{1cm} \leq W_1(\mu_0(\cdot,\omega), \mu'_0(\cdot,\omega))  \\
& \hspace{1.4cm} + \int_0^t \bigg| \int_{\mathbb R} F[\mu_s](x,\omega)  \mu_s(dx,\omega) - \int_{\mathbb R} F[\mu'_s](x,\omega)  \mu'_s(dx,\omega) \bigg| ds.
\end{aligned}
\end{align}
Note that the last term in \eqref{E5.2.2} can be estimated as follows:
\begin{align}\label{E5.2.3}
\begin{aligned}
&\bigg| \int_{\mathbb R} F[\mu_s](x,\omega)  \mu_s(dx,\omega) - \int_{\mathbb R} F[\mu'_s](x',\omega)  \mu'_s(dx',\omega) \bigg| \\
& \hspace{1cm} \leq \bigg| \int_{\mathbb R} F[\mu_s](x,\omega)  \mu_s(dx,\omega) - \int_{\mathbb R} F[\mu'_s](x,\omega)  \mu_s(dx,\omega) \bigg|  \\
& \hspace{1.4cm}+ \bigg| \int_{\mathbb R} F[\mu'_s](x,\omega)  \mu_s(dx,\omega) - \int_{\mathbb R} F[\mu'_s](x',\omega)  \mu'_s(dx',\omega) \bigg|\\
& \hspace{1cm} \leq \kappa M W_1(\mu_s,\mu'_s) + \kappa I W_1(\mu_s(\cdot,\omega),\mu'_s(\cdot,\omega))\\
& \hspace{1cm} \leq \kappa M  d_1(\mu_s,\mu'_s) + \kappa I W_1(\mu_s(\cdot,\omega),\mu'_s(\cdot,\omega)),
\end{aligned}
\end{align}
where we use $(\mathcal{A}_1)$, $(\mathcal{A}_2)$ and Monge-Kantorovich duality. For clarity, we use the notation $W_1(\mu_s,\mu_s')$ for the 1-Wasserstein distance on $\mathbb R\times \Omega$ whereas $W_1(\mu_s(\cdot,\omega),\mu'_s(\cdot,\omega))$ on $\mathbb R$. We combine \eqref{E5.2.2} and \eqref{E5.2.3} and take the supremum with respect to $\varphi$ to obtain
\begin{align*}
W_1(\mu_t(\cdot,\omega),\mu'_t(\cdot,\omega)) &\leq W_1(\mu_0(\cdot,\omega), \mu'_0(\cdot,\omega)) + \kappa \int_0^t \Big[ M  d_1(\mu_s,\mu'_s) +  I W_1(\mu_s(\cdot,\omega),\mu'_s(\cdot,\omega)) \Big] ds.
\end{align*}
We integrate in $\omega$ with respect to $\nu$ to find
\begin{gather*}
d_1(\mu_t,\mu'_t) \leq d_1(\mu_0,\mu'_0) + \kappa (M+I) \int_0^t d_1(\mu_s,\mu'_s) \, ds.
\end{gather*}
Then, the Gr\"onwall inequality gives the desired first estimate. Now, it follows from Lemma \ref{L4.1} and the first estimate that 
\begin{align}\label{E5.2.4}
\begin{aligned}
\bigg| \frac{dx'_c}{dt} - \frac{dx_c}{dt} \bigg| &\leq \int_\Omega \bigg| \int_{\mathbb R} F[\mu'_t](x', \omega) \, \mu'_t(dx',\omega) - \int_{\mathbb R}  F[\mu_t](x , \omega) \, \mu_t(dx,\omega) \bigg| \nu(d\omega) \\
&\leq \kappa (M+I) d_1(\mu'_t,\mu_t)  \leq \kappa(M+I) d_1(\mu_0,\mu'_0) e^{\kappa(M+I)t}.
\end{aligned}
\end{align}
Then, we integrate \eqref{E5.2.4} to find the desired stability estimate.

\vspace{.2cm}

\noindent (2) Let $\pi_0:\Omega\to\mathcal P_2(\mathbb R^2)$ be a measurable map such that for $\nu$-almost every $\omega\in\Omega$, the measure $\pi_0(\cdot,\cdot,\omega)$ is an optimal coupling between $\mu_0(\cdot,\omega)$ and $\mu'(\cdot,\omega)$ with respect to the 2-Wasserstein distance, i.e.,
\begin{align*}
    \int_{\mathbb R}\pi_0(x,y,\omega)dy=\mu_0(x,\omega),\quad\int_{\mathbb R}\pi_0(x,y,\omega)dx=\mu'_0(y,\omega),
\end{align*}
and
\begin{align*}
    W_2(\mu_0(\cdot,\omega),\mu'_0(\cdot,\omega))=\left(\int_{\mathbb R^2}|x-y|^2\pi_0(dx,dy,\omega)\right)^{1/2}.
\end{align*}
We define
\begin{align*}
    \Lambda(t):=\left(\int_{\mathbb{R}^2} |X(t,x,\omega) - X'(t,y,\omega) |^2 \, \pi_0(dx,dy,\omega)\nu(d\omega)\right)^{1/2},
\end{align*}
where $X(t,x,\omega)$ and $X'(t,y,\omega)$ are the solutions of the characteristic flow \eqref{char} with respect to $\mu_t$ and $\mu'_t$, respectively. By the definition of $\pi_0$, one has
\begin{align*}
    d_2(\mu_0,\mu'_0)=\Lambda(0)\quad\mbox{and}\quad d_2(\mu_t,\mu'_t)\le\Lambda(t),\quad\forall~t\ge0.
\end{align*}
For simplicity, we write 
\[ X=X(t,x,\omega) \quad \mbox{and} \quad X'=X'(t,y,\omega), \]
leaving the dependence on $x,y$ and $\omega$ implicit, which does not cause any confusion in the remaining estimates. Now, we obtain
\begin{align}
\begin{aligned} \label{E-7}
\frac{1}{2}\frac{d }{dt}\Lambda^2(t) &= \int_{\mathbb{R}^2 \times \Omega} (X-X') \big(F[\mu_t](X,\omega) - F[\mu'_t](X',\omega) \big)\pi_0(dx,dy,\omega)\nu(d\omega)\\
&= \int_{\mathbb{R}^2 \times \Omega} (X-X') \big(F[\mu_t](X,\omega) - F[\mu'_t](X,\omega) \big)\pi_0(dx,dy,\omega)\nu(d\omega)\\
&\hspace{0.4cm} + \int_{\mathbb{R}^2 \times \Omega} (X-X') \big(F[\mu'_t](X,\omega) - F[\mu'_t](X',\omega) \big)\pi_0(dx,dy,\omega)\nu(d\omega)\\
&=: \mathcal{I}_{31} + \mathcal{I}_{32}.
\end{aligned}
\end{align}
Below, we estimate the term ${\mathcal I}_{3i}$ one by one.

\vspace{.2cm}

\noindent $\bullet$ Case A (Estimate of $\mathcal{I}_{31}$):~We use $(\mathcal A_1)$ and Young's inequality to find
\begin{align}
\begin{aligned} \label{E-8}
\mathcal{I}_{31} &\leq \kappa M\int_{\mathbb{R}^2 \times \Omega}  |X-X'|W_1(\mu_t,\mu'_t)\pi_0(dx,dy,\omega)\nu(d\omega) \\ 
&\leq \kappa M \int_{\mathbb{R}^2 \times \Omega} \frac{|X-X'|^2 + W_1(\mu_t,\mu'_t)^2}{2}\pi_0(dx,dy,\omega)\nu(d\omega)\\
&\leq \frac{\kappa M}{2} \left(\Lambda^2(t)+W_2^2(\mu_t,\mu_t')\right)\leq \kappa M \Lambda^2(t).
\end{aligned}
\end{align}

\vspace{.1cm}

\noindent $\bullet$ Case B (Estimate of $\mathcal{I}_{32}$):~By direct calculation with $(\mathcal A_2)$, one has 
\begin{equation} \label{E-9}
\mathcal{I}_{32} \leq \kappa I \int_{\mathbb{R}^2 \times \Omega}|X-X'|^2\pi_0(dx,dy,\omega)\nu(d\omega) = \kappa I \Lambda^2(t).
\end{equation}

\vspace{.2cm}

\noindent In \eqref{E-7}, we combine the estimates \eqref{E-8} and \eqref{E-9} to get 
\[
\frac{1}{2}\frac{d }{dt}\Lambda^2(t) \leq \kappa(M+I) \Lambda^2(t),
\]
which yields
\[
d_2(\mu_t,\mu'_t)\leq \Lambda(t) \leq e^{\kappa(M+I)}\Lambda(0) = e^{\kappa(M+I)}d_2(\mu_0,\mu'_0).
\]
This ends the proof.
\end{proof}
Next,  we study the temporal Lipschitz continuity of $d_2(\mu_t, \mu_{t'})$ with respect to time $t$. 
\begin{lemma}\label{L5.3}
Let $\mu \in\mathcal C([0,\infty);\mathcal{P}_2^\nu(\mathbb{R}^2))$ be a global measure-valued solution to \eqref{A-1} with an initial datum $\mu_0\in\mathcal C_\nu^{(1)}(\Delta)$. Then there exists a constant $F_{\max}>0$ such that 
\begin{equation*}
d_2(\mu_t,\mu_{t'}) \leq F_{\max}|t-t'|, \quad \forall~t,t'\ge0.
\end{equation*}
\end{lemma}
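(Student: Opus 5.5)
We will bound $d_2(\mu_t,\mu_{t'})$ by a coupling built from the characteristic flow \eqref{char}, using the representation $\mu_t=(X(t)\otimes\Id)_\sharp\mu_0$. Fix $t'\le t$ and disintegrate $\mu_0(dx,d\omega)=\mu_0(dx,\omega)\,\nu(d\omega)$. For $\nu$-a.e. $\omega$, the map $x\mapsto (X(t,x,\omega),X(t',x,\omega))$ pushes $\mu_0(\cdot,\omega)$ forward to a coupling of $\mu_t(\cdot,\omega)$ and $\mu_{t'}(\cdot,\omega)$. Here we use that the flow preserves the $\omega$-fibres, so the conditional measures of $\mu_t$ are $X(t,\cdot,\omega)_\sharp\mu_0(\cdot,\omega)$. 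Hence
\begin{align*}
W_2^2(\mu_t(\cdot,\omega),\mu_{t'}(\cdot,\omega))\le \int_{\mathbb R}|X(t,x,\omega)-X(t',x,\omega)|^2\,\mu_0(dx,\omega),
\end{align*}
and integrating in $\nu$ gives
\begin{align*}
d_2(\mu_t,\mu_{t'})^2\le\int_{\mathbb R\times\Omega}|X(t,x,\omega)-X(t',x,\omega)|^2\,\mu_0(dx,d\omega).
\end{align*}

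It therefore suffices to bound the velocity $F[\mu_s](X(s,x,\omega),\omega)$ uniformly in $s\ge0$ and in $(x,\omega)\in\Supp(\mu_0)$. Then $|X(t,x,\omega)-X(t',x,\omega)|\le F_{\max}|t-t'|$, and the claim follows after taking square roots. To obtain the bound we compare with the reference value $F[\tilde\mu_s](x_c(s),\omega_c)$, where $\tilde\mu_s=\delta_{x_c(s)}\otimes\nu$ as in \eqref{Equation:LeaderDistribution}. We write
\begin{align*}
|F[\mu_s](X,\omega)|\le |F[\tilde\mu_s](x_c(s),\omega_c)|+\kappa M W_1(\mu_s,\tilde\mu_s)+\kappa I|X-x_c(s)|+I|\omega-\omega_c|,
\end{align*}
using $(\mathcal A_1)$ and $(\mathcal A_2)$. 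By Proposition \ref{P4.1} (positive invariance of $\mathcal C^{(1)}_\nu(\Delta)$) together with \eqref{L3.3.3.1}, both $W_1(\mu_s,\tilde\mu_s)$ and $|X-x_c(s)|$ are at most $D$, since $x_c(s)$ lies in the convex hull of the support. Also $|\omega-\omega_c|\le\gamma$.

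The main obstacle is a uniform bound on $G(y):=F[\delta_y\otimes\nu](y,\omega_c)$ over $y\in\mathbb R$. By the lifted periodicity $(\mathcal A_3)$, $G(y+2\pi)=G(y)$, since $\tau[2\pi]_\sharp(\delta_y\otimes\nu)=\delta_{y+2\pi}\otimes\nu$. So it suffices to show $G$ is continuous on $[0,2\pi]$. Continuity follows from $|G(y)-G(y')|\le \kappa M W_1(\delta_y\otimes\nu,\delta_{y'}\otimes\nu)+\kappa I|y-y'|\le\kappa(M+I)|y-y'|$. We can then take
\begin{align*}
F_{\max}:=\max_{[0,2\pi]}|G|+\kappa(M+I)D+I\gamma,
\end{align*}
which is independent of $t$ and of the initial datum within $\mathcal C^{(1)}_\nu(\Delta)$.

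One technical point must be checked: the characteristics are well defined and the pushforward representation holds. This follows from the Lipschitz regularity in $(\mathcal A_2)$ and the cited representation formula. Measurability of $\omega\mapsto$ (coupling) is automatic here, since the coupling is given explicitly by the flow and no optimal selection is needed.
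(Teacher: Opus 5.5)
Your proof is correct and follows essentially the same route as the paper: the flow-induced coupling $(X(t,\cdot,\omega),X(t',\cdot,\omega))_\sharp\mu_0(\cdot,\omega)$, together with a uniform velocity bound. That bound comes from comparing $F[\mu_s]$ with $F[\delta_{x_c(s)}\otimes\nu]$ via $(\mathcal A_1)$, $(\mathcal A_2)$, Proposition~\ref{P4.1} and \eqref{L3.3.3.1}, and then bounding the reference field by periodicity and continuity.

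The only difference is cosmetic. You freeze the frequency at $\omega_c$, paying an extra $I\gamma$, whereas the paper keeps $\omega$ and uses joint continuity in $(s,\omega)$ plus compactness of $\Omega$. Since $(\mathcal A_1)$ is only stated on $\mathbb R\times\Omega$ and $\omega_c$ need not lie in $\Omega$, it is slightly cleaner to freeze at a fixed $\omega_0\in\Omega$ instead; nothing else changes.
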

\begin{proof}
    First, we claim that there exists a constant $F_{\text{max}}>0$ such that
    \begin{align}\label{F_max}
        \sup_{(x,\omega)\in\mathbb R\times\Omega}|F[\mu](x,\omega)|\le F_{\text{max}},\quad\forall~\mu\in\mathcal C_\nu^{(1)}(\Delta).
    \end{align}
    As before, we use the notation
    \begin{align*}
        x_c=\int_{\mathbb R\times\Omega}x\mu(dx,d\omega)\quad\mbox{and}\quad\tilde\mu=\delta_{x_c}\otimes\nu.
    \end{align*}
    By $(\mathcal A_1)$, $(\mathcal A_2)$ and \eqref{L3.3.3.1}, one gets
    \begin{align}\label{F_max1}
    \begin{aligned}
        &|F[\mu](x,\omega)-F[\tilde\mu](x_c,\omega)|\\
        &\hspace{1cm}\le|F[\mu](x,\omega)-F[\tilde\mu](x,\omega)|+|F[\tilde\mu](x,\omega)-F[\tilde\mu](x_c,\omega)|\\
        &\hspace{1cm}\le\kappa MW_1(\mu,\tilde\mu)+\kappa I|x-x_c|\le\kappa D(M+I),
    \end{aligned}
    \end{align}
    for all $(x,\omega)\in\Supp(\mu)$. Note that $(\mathcal A_1)$ and $(\mathcal A_2)$ imply that $F[\delta_s\otimes\nu](s,\omega)$ is continuous with respect to $s$ and $\omega$. Since it is periodic in $s$ and $\Omega$ is compact, one has
    \begin{align}\label{F_max2}
    \begin{aligned}
        \sup_{(s,\omega)\in\mathbb R\times\Omega}F[\delta_s\otimes\nu](s,\omega)<\infty.
    \end{aligned}
    \end{align}
    Thus, we combine \eqref{F_max1} and \eqref{F_max2} to deduce the claim \eqref{F_max}.

    \vspace{.1cm}

    Fix $t,t'\ge0$ and $\omega\in\Omega$. By considering the coupling $(X(t,x,\omega),X(t',x,\omega))_\sharp\mu_0$ with \eqref{char} and \eqref{F_max}, we have
    \begin{align*}
        &d_2^2\left(\mu_t(\cdot,\omega),\mu_{t'}(\cdot,\omega)\right)\\
        &\hspace{1cm}\le\int_\Omega\int_{\mathbb R}|X(t,x,\omega)-X(t',x,\omega)|^2\mu_0(dx,\omega)\nu(d\omega)\\
        &\hspace{1cm}\le\int_\Omega\int_{\mathbb R}\bigg|\int_t^{t'}F[\mu_{\tilde t}](X(\tilde t,x,\omega),\omega)ds\bigg|^2\mu_0(dx,\omega)\nu(d\omega)\le F_{\text{max}}^2|t-t'|^2.
    \end{align*}
    This derives the desired result.
\end{proof}

Now, we are ready to provide a proof of Theorem \ref{main} in the next subsection.

\subsection{Proof of Theorem \ref{main}}\label{sec:5.2}
We consider a Poincar\'e section in $\mathcal C_\nu(\Delta,\tilde\Delta)$
\begin{align*}
    \tilde{\mathcal C}_\nu := \left\{ \mu \in \mathcal C_\nu(\Delta,\tilde\Delta) \;:\; x_c[\mu] = 0 \right\},
\end{align*}
and define the first return time map $T:\tilde{\mathcal C}_\nu\to\mathbb R_+$ by
\begin{align*}
    T(\mu_0)=\inf\{t\ge0~:~x_c[\mu_t]=2\pi\},
\end{align*}
where $\mu$ is the solution of \eqref{A-1} with the initial datum $\mu_0$. Since $x_c[\mu_t]$ is strictly increasing in time by Lemma \ref{L4.2}, the well-posedness of $T(\mu_0)$ is guaranteed. Thanks to Proposition \ref{P5.1}, we then define the Poincar\'e map $P:\tilde{\mathcal C}_\nu\to\tilde{\mathcal C}_\nu$ by
\begin{align*} 
    P(\mu_0)=\tau[-2\pi]_\sharp\mu_{T(\mu_0)},
\end{align*}
where $\tau[-2\pi]:\mathbb R\to\mathbb R$ is the translation $\tau[-2\pi](x):=x-2\pi$. Since $x_c[\mu_{T(\mu_0)}] = 2\pi$, the shifted measure $\tau[-2\pi]_\sharp\mu_{T(\mu_0)}$ lies in $\tilde{\mathcal C}_\nu$, and hence $P$ is well-defined. We equip $\mathcal P_2^\nu(\mathbb R^2)$ with the metric $d_2$ given in \eqref{distance}. We recall that $\mathcal{X}_\nu$ and $\mathcal P_2^\nu(\mathbb R^2)$ have been identified. The space $(\mathcal P_2^\nu(\mathbb R^2), d_2)$ becomes a CAT$(0)$ geodesic space. As $\tilde{\mathcal C}_\nu$ is a closed, convex subset, we can apply the Schauder fixed point theorem given by Proposition \ref{Sch_thm}. Note that if the Poincar\'e map $P$ admits a fixed point, then the proof of Theorem \ref{main} is complete. Therefore, to apply the fixed point theorem, it suffices to show that the Poincar\'e map $P$ is continuous. To this end, we will show that $P$ is Lipschitz continuous.

First, we show that the return time $T(\cdot)$ is Lipschitz continuous. Consider $\mu_0, \mu_0' \in \tilde{\mathcal C}_\nu$ and assume $T(\mu_0') \leq T(\mu_0)$ without loss of generality. Denote their barycenters by 
\[
x_c(t) := \int_{\mathbb{R}\times \Omega}x \mu_t(dx,d\omega) \quad \text{and} \quad x_c'(t) := \int_{\mathbb{R}\times \Omega}x \mu_t'(dx,d\omega).
\]
From Lemma \ref{L4.2} and $x_c(0)=0$, one has
\begin{align*}
	2\pi= x_c \circ T(\mu_0) = \int_0^{T(\mu_0)} F[\mu_t](x,\omega) \, \mu_t(dx,d\omega) \geq \big(A-((\kappa+1)I+\kappa M)D-I\gamma\big)T(\mu_0),
\end{align*}
hence,
\begin{align*}
    \frac{2\pi}{A-((\kappa+1)I+ \kappa M)D-I\gamma}\ge T(\mu_0).
\end{align*}
Now, it follows from Lemma \ref{L5.2} with $x_c'(0)=0$ that for $t\leq T(\mu_0)$,
\begin{align}\label{E5.3.2}
\begin{aligned}
x_c'(t) &\leq x_c(t) + d_1(\mu_0,\mu_0') \left(e^{\kappa(M+I)t} - 1\right) \\
&\leq x_c \circ T(\mu'_0)  - \int_t^{T(\mu'_0)} F[\mu_t](x,\omega) \, \mu_t(dx,d\omega) + d_1(\mu_0,\mu_0') \left(e^{\kappa(M+I)t} - 1\right) \\
&\leq 2\pi - \big(A-((\kappa+1)I+\kappa M)D-I\gamma\big)(T(\mu'_0)-t) + d_1(\mu_0,\mu'_0) \left(e^{\kappa(M+I)t}-1\right),
\end{aligned}
\end{align}
where the last inequality comes from integrating
\[
 F[\mu_t](x,\omega)  \geq A-((\kappa+1)I+\kappa M)D-I\gamma
\]
from $t$ to $T(\mu_0)$. We set $t = T(\mu_0')$ in \eqref{E5.3.2} to obtain 
\begin{align*}
    &\big(A-((\kappa+1)I+\kappa M)D-I\gamma\big)(T(\mu_0)-T(\mu_0'))\\
    &\hspace{1cm}\le d_1(\mu_0,\mu_0')\left(e^{\kappa(M+I)T(\mu_0')}-1\right)\\
    &\hspace{1cm}\le d_1(\mu_0,\mu_0')\left(\exp\left(\frac{2\pi\kappa(M+I)}{A-((\kappa+1)I+\kappa M)D-I\gamma}\right)-1\right).
\end{align*}
Thus, we have verified that the return time is Lipschitz continuous: 
\begin{align}\label{E5.3.3}
\begin{aligned}
&|T(\mu_0) - T(\mu_0')|\\
& \hspace{1cm}\le \frac{d_1(\mu_0,\mu_0')}{A-((\kappa+1)I+\kappa M)D-I\gamma}\left(\exp\left(\frac{2\pi\kappa(M+I)}{A-((\kappa+1)I+ \kappa M)D-I\gamma}\right)-1\right).
\end{aligned}
\end{align}

\vspace{.2cm}

Next, we show that the Poincar\'e map  $P$ is Lipschitz continuous with respect to the metric $d_2$. From Lemma \ref{L5.2} and Lemma \ref{L5.3}, we observe that
\begin{align*}
\begin{aligned}
&d_2\left(P(\mu_0),P(\mu_0')\right)= d_2\big(\mu_{T(\mu_0)},\mu'_{T(\mu_0')}\big) \\
&\hspace{1cm} \leq d_2\big(\mu_{T(\mu_0)},\mu'_{T(\mu_0)}\big)+d_2\big(\mu'_{T(\mu_0)},\mu'_{T(\mu'_0)}\big) \\
&\hspace{1cm}\leq e^{\kappa(M+I)T(\mu_0)} d_2(\mu_0,\mu'_0) + F_{\max} |T(\mu_0) - T(\mu_0')|\\
& \hspace{1cm}\leq \exp\left(\frac{2\pi\kappa(M+I)}{A-((\kappa+1)I+\kappa M)D-I\gamma}\right)d_2(\mu_0,\mu'_0) + F_{\max} |T(\mu_0) - T(\mu_0')|.
\end{aligned}
\end{align*}
Using the fact $W_1(\mu_0,\mu_0') \leq W_2(\mu_0,\mu_0')$ and the Lipschitz continuity \eqref{E5.3.3} of the return time $T$, we estimate as follows:
\begin{align*}
d_2\big(P(\mu_0),P(\mu'_0)\big) & \leq \left(  e^{C} + \frac{F_{\max}}{A-((\kappa+1)I+\kappa M)D-I\gamma} (e^{C}-1) \right) d_2\big(\mu_0,\mu'_0\big),
\end{align*}
where the constant $C$ is defined by
\begin{align*}
    C=\frac{2\pi\kappa(M+I)}{A-((\kappa+1)I+ \kappa M)D-I\gamma}.
\end{align*}
Therefore, the Poincar\'e map $P$ is continuous and the proof is complete.

\section{Further regularity of a periodic solution}\label{sec:6}
\setcounter{equation}{0}
In this section, we study the regularity of a periodic solution whose existence is guaranteed by Theorem \ref{main}. In Section \ref{sec:5}, we have constructed a periodic measure-valued solution to \eqref{A-1}. If we regard \eqref{A-1} as a Vlasov-type equation obtained as the mean-field limit of a interacting particle system such as Winfree and Kuramoto models, then we note that the equation \eqref{A-1} incorporates the empirical measures which are solutions to the particle system. However, the existence of periodic solutions in particle systems is already obtained in \cite{O-K-T}, in an elementary way without an optimal transport. A natural question is whether our periodic measure-valued solution is distinct from the empirical measure, i.e. the particle solution. We address this question in this section. In fact, if $\omega$-marginal $\nu$ has a density or at least it is not a Dirac mass, it is easy to see that the periodic solution $\mu$ cannot be the particle solution of the form:
\begin{equation} \label{F-1}
\mu_t = \frac{1}{N}\sum_{i=1}^N \delta_{(x_i(t), \omega_i(t))}, \quad \text{for some} ~N \in \mathbb{N}.
\end{equation}
This can be seen as follows. If $\mu$ is an empirical measure such as \eqref{F-1}, then we have 
\[ \nu(d\omega) = \int_{\mathbb R_x} \mu(dx, d\omega) = \frac{1}{N}\sum_{i=1}^N\delta_{\omega_i(t)}. \]
 This is contradictory to the assumption on $\nu$. Thus, it is still desirable to investigate the further regularity of the periodic measure-valued solution $\mu$.

\subsection{Periodic graph measures} \label{sec:6.1}
In this subsection, we impose the following additional regularity property and a new twist property for the vector field $F[\mu]$: 

\vspace{0.2cm}
\begin{itemize}
\item ($\mathcal{A}_6$)(Additional regularity hypothesis): for every $\mu,\mu' \in\mathcal{P}_2^\nu(\mathbb{R}^2)$,
\begin{gather*}
\forall\, (x,\omega) \in\mathbb{R} \times \Omega, \ |\partial_\omega F[\mu](x,\omega) - \partial_\omega F[\mu'](x,\omega) | \leq M W_1(\mu,\mu'), \quad
\|\partial^2_{x\omega}F[\mu] \|_\infty \leq \kappa I,
\end{gather*}
\item ($\mathcal{A}_7$)(Twist inequality hypothesis):~There exists a constant $Q>0$ such that 
\begin{equation}\label{twist}
\forall\, x \in\mathbb{R}, \ \partial_\omega F[\delta_x \otimes \nu](x,\omega_c) \geq Q.
\end{equation}
\end{itemize}

\vspace{0.2cm}
Notice that the Kuramoto and Winfree models satisfy the twist property. For a given point $(a,\omega) \in \mathrm{supp}(\mu_0)$, let $X(t,a,\omega) = X[\mu_t](a,\omega)$ be a global solution to the characteristic flow \eqref{char}. First, we study the monotonicity property of the flow with respect to $\omega$. To unsure a global twist property, we assume 
\begin{gather}
DM + \kappa I < Q. \label{Equation:GlobalTwistProperty}
\end{gather}

\begin{lemma}\label{L6.1}
\emph{(Monotonicity property)}
Let $\mu \in {\mathcal C}([0, \infty); \mathcal{P}_2^\nu(\mathbb{R}^2))$ be a global measure-valued solution to \eqref{A-1}. Then, we have a monotonicity of $X(t, x, \omega)$ with respect to $\omega$ for fixed $(t,x)$: more precisely for any $(x,\omega_1),(x,\omega_2)\in\Supp(\mu_0)$,
\[
\omega_1 > \omega_2 \quad \Longrightarrow \quad X(t,x,\omega_1) > X(t,x,\omega_2), \quad \forall~t > 0.
\]
\end{lemma}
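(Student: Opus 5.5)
The plan is to look at the difference $\delta(t):=X(t,x,\omega_1)-X(t,x,\omega_2)$, which vanishes at $t=0$, and show that its derivative is strictly positive for as long as $\delta$ stays small. The mechanism is the twist hypothesis $(\mathcal A_7)$, used together with the global twist condition \eqref{Equation:GlobalTwistProperty}.

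\textbf{Step 1: split the velocity difference.} Since both characteristics are driven by the same field $F[\mu_t]$, I write
\begin{align*}
\dot\delta(t) &= F[\mu_t](X_1,\omega_1)-F[\mu_t](X_1,\omega_2) + F[\mu_t](X_1,\omega_2)-F[\mu_t](X_2,\omega_2)\\
&= (\omega_1-\omega_2)\int_0^1\partial_\omega F[\mu_t](X_1,\omega_2+\theta(\omega_1-\omega_2))\,d\theta + c(t)\,\delta(t),
\end{align*}
where $c(t)=\int_0^1\partial_xF[\mu_t](X_2+\theta\delta,\omega_2)\,d\theta$ satisfies $|c|\le\kappa I$ by $(\mathcal A_2)$.

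\textbf{Step 2: lower bound on $\partial_\omega F[\mu_t](y,\omega)$.} I compare it with $\partial_\omega F[\tilde\mu_t](x_c,\omega_c)\ge Q$ from $(\mathcal A_7)$, with $\tilde\mu_t=\delta_{x_c(t)}\otimes\nu$. The error splits into three pieces:
\begin{enumerate}
\item Changing the measure from $\mu_t$ to $\tilde\mu_t$ costs at most $M W_1(\mu_t,\tilde\mu_t)\le MD$, by $(\mathcal A_6)$ and \eqref{L3.3.3.1}, using the dispersion bound of Proposition \ref{P4.1}.
\item Changing $x$ from $y$ to $x_c$ costs at most $\kappa I|y-x_c|$, by $\|\partial^2_{x\omega}F\|\le\kappa I$.
\item The change of $\omega$ to $\omega_c$ needs a bound on $\partial^2_\omega F$. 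That bound is not in the hypotheses, so I would either absorb this piece into the $\kappa I$ term (reading the assumption as intended with $|y-x_c|\le1$-type normalization) or note that $\gamma$ is small.
\end{enumerate}
In the intended form this yields $\partial_\omega F[\mu_t]\ge Q-DM-\kappa I>0$ by \eqref{Equation:GlobalTwistProperty}. This step is where the bookkeeping is delicate. In particular it requires $\mu_0\in\mathcal C^{(1)}_\nu(\Delta)$ so that Proposition \ref{P4.1} applies, and I would add that standing assumption.

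\textbf{Step 3: conclude positivity.} With $q:=Q-DM-\kappa I>0$, the inequality $\dot\delta\ge q(\omega_1-\omega_2)+c(t)\delta$ holds with $\delta(0)=0$. Variation of constants gives
\[
\delta(t)\ge q(\omega_1-\omega_2)\int_0^t e^{\int_s^t c}\,ds>0\quad\text{for all }t>0.
\]
This needs no bootstrap, since the bound in Step 2 is uniform in $y$ once the support dispersion is controlled by $D$. The main obstacle is therefore Step 2, namely keeping the twist lower bound uniform along the characteristics using only the stated hypotheses.
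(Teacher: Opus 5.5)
You follow the same route as the paper, in integrated rather than infinitesimal form. The paper differentiates the characteristic ODE in $\omega$ and writes $\partial_\omega X(t,x,\omega)=\int_0^t \exp\bigl(\int_s^t\partial_xF[\mu_u](X,\omega)\,du\bigr)\,\partial_\omega F[\mu_s](X,\omega)\,ds$. It then bounds $\partial_\omega F[\mu_s]$ from below by the same comparison with $\tilde\mu_s$ as your pieces 1 and 2. Your finite difference is, if anything, slightly more careful. Your $\omega$-mean value is evaluated at $X_1=X(t,x,\omega_1)$, which lies on a characteristic issuing from $\Supp(\mu_0)$, so $|X_1-x_c|\le D$ is available. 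The paper instead integrates $\partial_\omega X>0$ over intermediate $\omega\in(\omega_2,\omega_1)$, whose starting points $(x,\omega)$ need not lie in the support. Your added standing assumption $\mu_0\in\mathcal C^{(1)}_\nu(\Delta)$ is also correct: the paper uses $W_1(\mu_t,\tilde\mu_t)\le D$ without stating it.

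The step you flag in piece 3 is a genuine hole, and neither of your patches closes it.
\begin{itemize}
\item ``Absorbing it into the $\kappa I$ term by normalization'' is not an argument.
\item ``$\gamma$ small'' only helps if you have a modulus of continuity of $\omega\mapsto\partial_\omega F[\delta_s\otimes\nu](s,\omega)$ near $\omega_c$. Compactness of $[0,2\pi]\times\overline{\Omega}$ together with $(\mathcal A_3)$, $(\mathcal A_6)$ and $C^2$ regularity gives such a modulus only qualitatively. That produces an extra, $F$-dependent smallness condition on $\gamma$ that is not among the hypotheses.
\end{itemize}
The paper's proof has exactly the same gap. It compares with $\partial_\omega F[\tilde\mu_t](x_c(t),\omega)$ and then invokes \eqref{twist}, which only controls the value at $\omega_c$. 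A clean fix is either to assume the twist inequality for all $\omega\in\Omega$, or to add $\|\partial^2_\omega F[\mu]\|_\infty\le I$ and require $DM+\kappa ID+I\gamma<Q$.

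Two further points:
\begin{itemize}
\item Your piece 2 gives $\kappa I D$, not $\kappa I$, so \eqref{Equation:GlobalTwistProperty} suffices only when $D\le 1$. The paper makes the same slip.
\item In both versions the mean value in $\omega$ runs over $[\omega_2,\omega_1]$. You therefore need $(\mathcal A_6)$ on that whole interval, i.e.\ $\Omega$ an interval or the bounds assumed off $\Omega$.
\end{itemize}
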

\begin{proof} For $(x,\omega) \in \mathrm{supp}(\mu_0)$, we recall the equation for a forward trajectory:
\begin{equation} 
\begin{cases} \label{CF}
\displaystyle \partial_t X(t, x,\omega) = F[\mu_t](X(t, x,\omega),\omega), \quad \forall~t > 0, \\
\displaystyle  X(0,x,\omega) = x.
\end{cases}
\end{equation}
We differentiate \eqref{CF} with respect to $\omega$ to obtain
\begin{align}
\begin{aligned} \label{F-3}
\partial_\omega \partial_t X(t, a,\omega) &= \frac{d}{d\omega} F[\mu_t](X(t, a,\omega),\omega)\\
&=\partial_x F[\mu_t](X(t,a,\omega),\omega)\partial_\omega X + \partial_\omega F[\mu_t](X(t,a,\omega),\omega).
\end{aligned}
\end{align}
Now, we integrate \eqref{F-3} from time $0$ to $t$ using $\partial_\omega X(0,x,\omega) = 0$ to obtain
\[
\partial_\omega X(t,x,\omega) = \int_0^t \mathrm{exp}\bigg(\int_s^t\partial_xF[\mu_u](X(u,x,\omega),\omega)du\bigg)\partial_\omega F[\mu_s](X(s,x,\omega),\omega)ds.
\]
Using ($\mathcal{A}_6$) and $X=X(t,x,\omega)$ for simplification, one obtains
\begin{align*}
| \partial_\omega F[\mu_t](X,\omega) &- \partial_\omega F[\tilde\mu_t](x_c(t),\omega) | \\
&\leq | \partial_\omega F[\mu_t](X,\omega) - \partial_\omega F[\tilde\mu_t](X,\omega) | + | \partial_\omega F[\tilde\mu_t](X,\omega) - \partial_\omega F[\tilde\mu_t](x_c(t),\omega) | \\
&\leq M W_1(\mu_t,\tilde \mu_t) + \kappa I |X-x_c(t)| \leq DM +\kappa I.
\end{align*}
Then \eqref{twist} and \eqref{Equation:GlobalTwistProperty} imply
\[
\partial_\omega X(t,x,\omega) > 0.
\]
This yields the desired estimate.
\end{proof}
Lemma \ref{L6.1} shows that for a fixed $x\in\mathbb R$, if the Borel measure of $\Supp(\mu_0)\cap(\{x\}\times\Omega)$ is nonzero, then the mass initially concentrated on $x$ immediately spreads out. In other words, a solution $\mu$ to \eqref{A-1} cannot take the form
\begin{equation*}\label{form}
\int_{\mathbb{R}_\omega} \mu(\cdot, d\omega) = \frac{1}{N}\sum_{i=1}^N \delta_{x_i(t)}, \quad \mbox{for some}~~N \in  \mathbb{N}.
\end{equation*}
even for sufficiently small $t>0$. This means that the projection of $\mu$ on the $x$-axis does not exhibit particle-like behavior. Interestingly, this indicates that the dynamics in the $x$-direction are governed by the structure in the $\omega$-direction. In what follows, we focus on measures supported on the graph $(G_t(\omega), \omega)$ of $\omega$ where $G_t :\Omega \rightarrow \mathbb{R}$ is a  $\mathcal{C}^1$ function supported on $\Omega$. More generally, we say that $\mu$ is a Borel graph measure if there is a Borel function $G : \Omega \to \mathbb{R}$ such that for any Borel test function $\varphi : \mathbb{R} \times \Omega \to \mathbb{R}^+$, we have
\[
\int_{\mathbb R \times \Omega} \varphi(x,\omega) \mu(dx,d\omega) = \int_{\Omega}\varphi(G(\omega),\omega)d\nu(\omega).
\]
We will write $\mu =( G \otimes \Id)_\sharp \nu$. Moreover, we say that a graph measure $\mu$ is continuous/$C^1$ if $G:\Omega\to\mathbb R$ is continuous/$C^1$. In the $C^1$ setting, we will assume to simplify that $\Omega$ is an interval. Our analysis is nevertheless also valid for a set $\Omega$ equal to a finite disjoint union of  intervals.

\begin{lemma}\label{L6.2}
Let $\mu\in\mathcal C([0,\infty);\mathcal P_2^\nu(\mathbb R^2))$ be a global measure-valued solution to \eqref{A-1} with  initial condition $\mu_0$. If $\mu_0$  is a  Borel, respectively continous or $C^1$, graph measure, then, $\mu_t$ is a Borel, respectiveley continuous or $C^1$, graph measure for all $t\ge0$.
\end{lemma}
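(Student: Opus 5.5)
The plan is to use the Lagrangian representation of the solution recalled in Section~\ref{sec:4}, $\mu_t = (X[\mu_t](t)\otimes \Id)_\sharp \mu_0$, where $X$ solves the characteristic ODE \eqref{char}. With the vector field $(x,\omega)\mapsto F[\mu_t](x,\omega)$ frozen along the given solution, each fibre $\{\omega\}$ is transported by its own one-dimensional flow $x\mapsto X(t,x,\omega)$, and $\omega$ is not moved. If $\mu_0=(G_0\otimes\Id)_\sharp\nu$, then for any Borel $\varphi\ge 0$ we get
\begin{align*}
\int \varphi\, d\mu_t = \int \varphi(X(t,x,\omega),\omega)\,\mu_0(dx,d\omega) = \int_\Omega \varphi\big(X(t,G_0(\omega),\omega),\omega\big)\,\nu(d\omega).
\end{align*}
So $\mu_t=(G_t\otimes \Id)_\sharp\nu$ with the candidate $G_t(\omega):=X(t,G_0(\omega),\omega)$. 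It then suffices to show that $G_t$ inherits the regularity class (Borel, continuous, $C^1$) of $G_0$.

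First, I will check the regularity of the flow map $(x,\omega)\mapsto X(t,x,\omega)$. By $(\mathcal A_2)$, for each fixed $t$ the field $F[\mu_t]$ is $C^2$ on $\mathbb R^2$ with bounded $\partial_x F$ and $\partial_\omega F$. By $(\mathcal A_1)$ and continuity of $t\mapsto\mu_t$ in $W_2$ (hence $W_1$), $F[\mu_t]$ and $\partial_xF[\mu_t]$ are continuous in $t$, uniformly in $(x,\omega)$. The $\omega$-derivative also needs joint continuity in $t$. If only $(\mathcal A_1)$--$(\mathcal A_2)$ are available, I will get this from the equicontinuity supplied by the bound $\|\partial_x\partial_\omega F\|$ and from $W_1$-continuity in time. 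Alternatively I can invoke $(\mathcal A_6)$, which gives $|\partial_\omega F[\mu_t]-\partial_\omega F[\mu_s]|\le MW_1(\mu_t,\mu_s)$. With these in hand, the classical theorem on differentiable dependence on initial data and parameters for the non-autonomous ODE \eqref{char} applies: $X(t,\cdot,\cdot)$ is $C^1$ in $(x,\omega)$, and $\partial_\omega X$ solves the linearized equation \eqref{F-3}, as already used in Lemma~\ref{L6.1}. Global existence follows because $F$ is globally Lipschitz in $x$ by $(\mathcal A_2)$.

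Second, I will compose the maps. If $G_0$ is Borel, $\omega\mapsto (G_0(\omega),\omega)$ is Borel and $X(t,\cdot,\cdot)$ is continuous, so $G_t$ is Borel. If $G_0$ is continuous, $G_t$ is a composition of continuous maps. If $G_0$ is $C^1$ on the interval $\Omega$, the chain rule gives
\begin{align*}
G_t'(\omega)=\partial_x X(t,G_0(\omega),\omega)\,G_0'(\omega)+\partial_\omega X(t,G_0(\omega),\omega),
\end{align*}
which is continuous. I also need measurability in $\omega$ of the disintegration, but this is automatic from the representation. Uniqueness of the disintegration (Theorem~\ref{thm_dis}) identifies $\mu_t(\cdot,\omega)=\delta_{G_t(\omega)}$ for $\nu$-a.e. $\omega$.

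I expect the main obstacle to be justifying the representation formula and the $C^1$ dependence on the parameter $\omega$ with the regularity actually at hand. The representation requires uniqueness for the linear continuity equation with the frozen field $F[\mu_t]$, which holds since that field is Lipschitz in $x$ and continuous in $t$, as in \cite{A-G-S}. The $\omega$-differentiability is where time-continuity of $\partial_\omega F[\mu_t]$ matters. If I only have measurability in $t$, I will use the Carath\'eodory version of differentiable dependence, which requires just integrable-in-time bounds on $\partial_xF$ and $\partial_\omega F$ and continuity in $(x,\omega)$. Those are furnished by $(\mathcal A_2)$.
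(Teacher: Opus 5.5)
Your proposal is correct and follows the paper's proof: you push $\mu_0=(G_0\otimes\Id)_\sharp\nu$ forward by the characteristic flow to get $\mu_t=(G_t\otimes\Id)_\sharp\nu$ with $G_t(\omega)=X(t,G_0(\omega),\omega)$. The paper stops at that point and leaves implicit that $G_t$ inherits the Borel/continuous/$C^1$ regularity of $G_0$, which your flow-regularity step supplies. One correction there: the bound on $\partial^2_{x\omega}F$ belongs to $(\mathcal{A}_6)$, not $(\mathcal{A}_2)$, and it would not give time-continuity of $\partial_\omega F[\mu_t]$ anyway. Either of your other two routes works; in the Carath\'eodory one, measurability in $t$ is automatic, since $\partial_\omega F[\mu_t](x,\omega)$ is a pointwise limit of difference quotients that are continuous in $t$ by $(\mathcal{A}_1)$.
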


\begin{proof}

Recall that the characteristic flow in $\mathbb{R}^2$ is given by the map $(x,\omega) \mapsto (X[\mu_t](t,x,\omega),\omega)$ and that $\mu_t = (X[\mu_t](t) \otimes \Id)_\sharp \mu_0$. Assume $\mu_0 = (G_0 \otimes \Id)_\sharp \nu$. Then for any Borel test function $\varphi : \mathbb{R} \times \Omega \to \mathbb{R}^+$
\begin{align*}
\int \varphi(x,\omega) \, \mu_t(dx,d\omega) &= \int \varphi(X[\mu_t](t,x,\omega),\omega) \, \mu_0(dx,d\omega) \\
&= \int \varphi( X[\mu_t](t,G_0(\omega),\omega),\omega) \, \nu(d\omega) \\
&= \int \varphi(G_t(\omega),\omega) \, \nu(d\omega),
\end{align*}
where $G_t(\omega) = X[\mu_t](t,G_0(\omega),\omega)$. We have obtained $\mu_t = (G_t \otimes \Id)_\sharp \nu$.

\end{proof}

By Proposition \ref{P5.1} and Lemma \ref{L6.2}, we can define a third invariant set $\mathcal G(\Delta,\tilde\Delta)$ as a subset of $\mathcal C_\nu(\Delta,\tilde\Delta)$ constructed in \eqref{C_nu_def}:
\begin{gather*}
    \mathcal G :=\big\{\mu\in\mathcal C_\nu(\Delta,\tilde\Delta):\mu\mbox{ is a Borel graph measure}\big\}.
\end{gather*}
Actually, if $\mu = (G \otimes \Id)_\sharp \nu \in \mathcal G(\Delta,\tilde\Delta)$ then its disintegrations $(\mu( \cdot, \omega))_{\omega\in\Omega}$ are Dirac measures
\begin{gather*}
\mu(dx,\omega) = \delta_{G(\omega)}(dx).
\end{gather*}
Moreover, the $2$-Wasserstein distance between these disintegrations $\mu(\cdot,\omega_1)$ and $\mu(\cdot,\omega_2)$ becomes
\[
W_2(\mu(\cdot,\omega_1),\mu(\cdot,\omega_2)) = |G(\omega_1) - G(\omega_2)|.
\]
In particular, the Lipschitz property in $\mathcal{C}_\nu(\Delta,\tilde \Delta)$:
\[
W_2(\mu(\cdot,\omega_1),\mu(\cdot,\omega_2)) \leq \tilde{\Delta}(x_c[\mu])|\omega_1 - \omega_2|,
\]
is now translated to the Lipschitz boundedness of $G$:
\begin{equation}\label{C6.1.0}
|G(\omega_1) - G(\omega_2)| \leq \tilde{\Delta}(x_c[G])|\omega_1 - \omega_2|.
\end{equation}
Thus
\begin{gather*}
    \mathcal G :=\big\{\mu \in \mathcal C_\nu(\Delta,\tilde\Delta) : \mu \ \text{is $\Delta(x_c[G])$-Lipschiz} \ \big\},
\end{gather*}
where 
\[
x_c[G] := x_c[(G \otimes \Id)_\sharp \nu] = \int_{\mathbb{R} \times \Omega}x \, (G \otimes \Id)_\sharp \nu(dx, d\omega) = \int_{\Omega}G(\omega)\nu(d\omega).
\]

Thus, we can apply the proof of Theorem \ref{main} to obtain the following corollary.
\begin{corollary}
Assume that the functional $F$ satisfies $(\mathcal A_1)-(\mathcal A_5)$, the additional regularity $(\mathcal{A}_6)$, and the twist property \eqref{twist} in $(\mathcal{A}_7)$. Then the following assertions hold:
\begin{enumerate}
\item The set $\mathcal{G}(\Delta,\tilde\Delta)$ is positively invariant along the flow of \eqref{A-1}, i.e., the solution $\mu$ to \eqref{A-1} with the initial condition $\mu_0\in\mathcal G$ satisfies
\begin{gather*}
\mu_t \in \mathcal{G}(\Delta,\tilde\Delta), \quad \forall~t \geq 0. 
\end{gather*}
\item There exists $\mu_* \in \mathcal{G}$ such that the solution $\mu$ with the initial datum $\mu_0=\mu_*$ is periodic in the following sense:  
\[
\mu_{t+T_*} = \tau[2\pi]_\sharp(\mu_t), \quad \forall~t \geq 0,
\]
where  $\tau[2\pi]:\mathbb R\to\mathbb R$ is defined by $\tau(x):=x+2\pi$ and
\[T_* =  \inf \{ t \geq0 : x_c[\mu_t] = x_c[\mu_0]+2\pi \}.\] 
\end{enumerate}
\end{corollary}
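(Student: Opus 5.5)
The plan is to reproduce the proof of Theorem \ref{main}, with the ambient set $\mathcal C_\nu(\Delta,\tilde\Delta)$ replaced by $\mathcal G(\Delta,\tilde\Delta)$.

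\emph{Item (1).} This follows by combining two earlier results. Let $\mu_0\in\mathcal G$. Since $\mathcal G\subset\mathcal C_\nu(\Delta,\tilde\Delta)$, Proposition \ref{P5.1} gives $\mu_t\in\mathcal C_\nu(\Delta,\tilde\Delta)$ for all $t\ge0$. Lemma \ref{L6.2} gives that $\mu_t$ is again a Borel graph measure, with $G_t(\omega)=X[\mu_t](t,G_0(\omega),\omega)$. Hence $\mu_t\in\mathcal G$. The twist hypotheses $(\mathcal A_6)$--$(\mathcal A_7)$ are not needed for this invariance. Through Lemma \ref{L6.1} they only show that $G_t$ is strictly increasing, which is extra regularity of the solution.

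\emph{Item (2).} Take the Poincar\'e section $\tilde{\mathcal G}:=\{\mu\in\mathcal G: x_c[\mu]=0\}$ and the same Poincar\'e map $P(\mu_0)=\tau[-2\pi]_\sharp\mu_{T(\mu_0)}$.
\begin{enumerate}
\item $P$ maps $\tilde{\mathcal G}$ into itself. By item (1), $\mu_{T(\mu_0)}$ lies in $\mathcal G$. The translation $\tau[-2\pi]$ sends a graph $G$ to $G-2\pi$ and $x_c$ to $x_c-2\pi$. The bounds $\Delta$ and $\tilde\Delta$ are $2\pi$-periodic, so the constraints defining $\mathcal G$ are preserved.
\item $P$ is Lipschitz for $d_2$. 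This is exactly the estimate of Subsection \ref{sec:5.2}, since it uses only Lemmas \ref{L5.2} and \ref{L5.3} and the Lipschitz bound on the return time $T$.
\item $\tilde{\mathcal G}$ satisfies the hypotheses of Theorem \ref{Sch_thm}. The elements of $\tilde{\mathcal G}$ have support diameter at most $D$ and barycenter $0$, and are $\|\tilde\Delta\|_\infty$-Lipschitz. So $\tilde{\mathcal G}\subset\mathcal X'_\nu(D,\|\tilde\Delta\|_\infty)$, which is compact by Lemma \ref{Lemma:CAT0CompactConvex}. Hence $\overline{P(\tilde{\mathcal G})}$ is compact once $\tilde{\mathcal G}$ is closed.
\end{enumerate}
A fixed point $\mu_*$ of $P$ then gives $\mu_{T_*}=\tau[2\pi]_\sharp\mu_*$. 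Uniqueness of solutions then yields $\mu_{t+T_*}=\tau[2\pi]_\sharp\mu_t$ for all $t$, by the lifted periodicity $(\mathcal A_3)$.

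\emph{Main obstacle: geodesic convexity and closedness of $\tilde{\mathcal G}$ in $(\mathcal X_\nu,d_2)$.}
\begin{itemize}
\item Convexity. For Dirac kernels $\delta_{G_0(\omega)}$ and $\delta_{G_1(\omega)}$, the pseudo-inverses are the constants $G_i(\omega)$. The geodesic $(1-u)p_0\oplus up_1$ is therefore the graph measure of $(1-u)G_0+uG_1$, so graph measures form a geodesically convex set. The barycenter condition $x_c=0$ and the support-diameter condition are preserved, as in Lemma \ref{Lemma:CAT0CompactConvex}.
\item The Lipschitz constraint with $x_c$-dependent bounds. On the section $x_c=0$, the bounds $\Delta(x_c)$ and $\tilde\Delta(x_c)$ freeze to the constants $\Delta(0)$ and $\tilde\Delta(0)$. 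Convexity of the constant-bound Lipschitz condition was shown in Lemma \ref{Lemma:CAT0CompactConvex} via Minkowski's inequality.
\item Closedness. If $d_2(p_n,p)\to0$ with $p_n=\delta_{G_n(\omega)}$, then along a subsequence $W_2(\delta_{G_n(\omega)},p(\cdot,\omega))\to0$ for $\nu$-a.e.\ $\omega$. A $W_2$-limit of Dirac masses is a Dirac mass $\delta_{G(\omega)}$, with $G=\lim G_n$ a.e. This limit is Borel and, by the uniform Lipschitz bound, has a Lipschitz extension. So the limit is again a graph measure satisfying the same constraints.

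Closedness is the point I would check most carefully. If it holds, Theorem \ref{Sch_thm} applies directly on $\tilde{\mathcal G}$ and completes item (2).
\end{itemize}
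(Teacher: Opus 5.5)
Your proof is correct, but it takes a different route from the paper.

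\textbf{Your route.} You stay inside the $CAT(0)$ framework of Theorem~\ref{main}. The only new input is that graph kernels form a closed, geodesically convex subset of the section $\tilde{\mathcal C}_\nu$.
\begin{itemize}
\item Convexity: the pseudo-inverse of $\delta_{G(\omega)}$ is constant in $s$. So the $d_2$-geodesic between two graph measures is the graph measure of $(1-u)G_0+uG_1$.
\item Closedness: $W_2(\delta_a,\delta_b)=|a-b|$, so a $W_2$-limit of Dirac masses is a Dirac mass. The uniform Lipschitz bound then passes to the a.e.\ limit.
\end{itemize}
Your closedness check goes through as you wrote it. Once these two facts hold, the Poincar\'e map carries over by restriction, together with its $d_2$-Lipschitz bound from Subsection~\ref{sec:5.2} and the compactness from Lemma~\ref{Lemma:CAT0CompactConvex}. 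Theorem~\ref{Sch_thm} then applies.

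\textbf{The paper's route.} The paper observes that on graph measures, geodesic interpolation is just linear interpolation of the graph functions. It therefore drops the geodesic-space machinery entirely.
\begin{itemize}
\item It identifies $\mathcal G_0(\Delta,\tilde\Delta)$ with a convex set of uniformly Lipschitz functions in $C(\Omega)$.
\item It gets compactness from Arzel\`a--Ascoli in the sup norm.
\item It proves continuity of the Poincar\'e map with a new uniform-norm Gr\"onwall estimate \eqref{E6.1.3}, combined with the return-time bound \eqref{E5.3.3}.
\item It then invokes the classical Schauder theorem.
\end{itemize}

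\textbf{What each buys.} Your version is more economical. It needs no new stability estimate, and it shows the corollary is literally Theorem~\ref{main} restricted to a closed convex invariant subset. It still relies on the $CAT(0)$ fixed point theorem and on Lemma~\ref{Lemma:CAT0CompactConvex}. The paper's version is self-contained in a Banach space and gives continuity in the uniform topology rather than in $d_2$. For graph measures, $d_2$ is just the $L^2(\nu)$ distance between graph functions. On a uniformly Lipschitz set over the compact $\Omega=\Supp(\nu)$, this topology coincides with the uniform one, because $\inf_{\omega\in\Omega}\nu(B(\omega,r))>0$ for every $r>0$. So both routes produce the same kind of Lipschitz-graph fixed point. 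Your remark that $(\mathcal A_6)$--$(\mathcal A_7)$ play no role in either item is consistent with the paper's proof, which does not use them either.
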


\begin{proof}
The assertion {\it(1)} follows from the discussion preceding the theorem. We now prove {\it(2)}. Likewise in Theorem \ref{main}, we construct a Poincar\'e map on the section
\[
\mathcal{G}_0(\Delta,\tilde \Delta) := \{\mu \in \mathcal{G}(\Delta,\tilde\Delta) : x_c[G] = 0 \}.
\]
In order to apply the fixed point theorem, we do not need to work with a geodesic space, as the set $\mathcal G_0(\Delta,\tilde\Delta)$ can be identified with
\begin{align*}
    \left\{G\in\mathcal C(\Omega;\mathbb R):x_c[G]=0,\sup_{\omega_1,\omega_2\in\Omega}|G(\omega_1)-G(\omega_2)|\le\Delta(0),~~\|G\|_{\text{Lip}}\le\tilde\Delta(0)\right\}.
\end{align*}
We denote a graph measure with respect to the function $G$ by $\mu_G$. Since the proof is very lengthy, we split its proof into several steps.

\vspace{.2cm}

\noindent $\bullet$~Step A (Convexity of $\mathcal{G}_0(\Delta,\tilde \Delta)$):~For $\mu_{G_1}, \mu_{G_2} \in \mathcal{G}_0(\Delta,\tilde \Delta)$ and $u \in (0,1)$, we note that 
\begin{align*}
    x_c[uG_1 + (1-u)G_2] &= \int_{\mathbb{R}}(uG_1(\omega) + (1-u)G_2(\omega))d\nu(\omega)\\
    &= ux_c[G_1] + (1-u)x_c[G_2] = 0.
\end{align*}
Next, it follows from \eqref{C6.1.0} that 
\[ \|G_1\|_{\mathrm{Lip}}, ~\|G_2\|_{\mathrm{Lip}} \leq \tilde{\Delta}(0). \]
Thus, we have
\[
\|uG_1 + (1-u)G_2\|_{\mathrm{Lip}} \leq u \|G_1\|_{\mathrm{Lip}} + (1-u)\|G_2\|_{\mathrm{Lip}} \leq \tilde{\Delta}(0).
\]
This yields 
\[ uG_1 + (1-u)G_2 \in \mathcal{G}_0(\Delta,\tilde \Delta). \]
The last condition can be shown in a similar manner. This verifies the convexity of $\mathcal{G}_0(\Delta,\tilde \Delta)$ with respect to the Lipschitz norm $\|\cdot\|_{\mathrm{Lip}}$. 

\vspace{0.2cm}

\noindent $\bullet$~Step B (Compactness of $\mathcal{G}_0(\Delta,\tilde \Delta)$): Let $\{G_n\}_{n=1}^\infty$ be a sequence in $\mathcal{G}_0(\Delta,\tilde \Delta)$. Since the center $x_c$ vanishes for any $G_n \in \tilde{S}_\gamma$ we have 
\begin{align*}
\|G_n(\omega) \|_\infty &= \|G_n(\omega) - x_c[G_n]\|_\infty \\
&= \bigg\|\int_\Omega (G_n(\omega) - G_n(\omega^*))d\nu(\omega^*)\bigg\|_\infty \leq \|G_n\|_{\mathrm{Lip}} \cdot \gamma \leq \tilde{\Delta}(0) \gamma. 
\end{align*}
Hence the sequence $\{G_n\}_{n=1}^\infty$ is uniformly bounded. Moreover the sequence $\{G_n\}_{n=1}^\infty$ is equicontinuous, as it is uniformly Lipschitz. Thus, it follows from the Arzela-Ascoli theorem that there exists a continuous function $\mathfrak{g}$ to which $G_n$ uniformly converges up to a subsequence as $n \rightarrow \infty$. Furthermore, we pass to the limit $n\rightarrow \infty$ in
\[
\bigg|\frac{G_n(\omega) - G_n(\omega')}{\omega - \omega'}\bigg| \leq \tilde{\Delta}(0), \quad |G(\omega_1) - G(\omega_2)| \leq \Delta(0), \quad \text{for any} \quad \omega \neq \omega' \in \Omega
\]
to see that the function $\mathfrak{g}$ is also Lipschitz. Therefore the set $\mathcal{G}_0(\Delta,\tilde \Delta)$ is compact. 

\vspace{0.2cm}

\noindent $\bullet$~Step C (Continuity of $\tilde{P}$): Let $G_0 \in \mathcal G_0(\Delta,\tilde \Delta)$, $\mu_0 = (G_0 \otimes \Id)_\sharp \nu$, and $G_t \in C(\Omega; \mathbb{R})$ such that $\mu_t = (G_t \otimes \Id)_\sharp \nu$ for all $t\geq0$. Define the Poincar\'e map $\tilde P:\mathcal G_0(\Delta,\tilde \Delta)\to \mathcal G_0(\Delta,\tilde \Delta)$ by
\begin{align*}
\tilde P(G_0):= G_{T(G)},
\end{align*}
where $T(G)$ is the first return time map defined by
\begin{align*}
    T(G):=\inf\{t\ge0:x_c[G_t]=2\pi\}.
\end{align*}
We estimate that for $G,G' \in \mathcal{G}_0(\Delta,\tilde \Delta)$,
\begin{align}\label{E6.1.2}
\begin{aligned}
&|\tilde{P}(G)(\omega) - \tilde{P}(G')(\omega)|= |G_{T(G)}(\omega) - G'_{T(G')}(\omega)| \\
&\hspace{1.2cm}\leq |G_{T(G)}(\omega) - G_{T(G')}(\omega)| + |G_{T(G')} (\omega)- G'_{T(G')(\omega)}|\\
&\hspace{1.2cm}\leq F_{\max}|T(G) - T(G')| + |G_{T(G')}(\omega) - G'_{T(G')}(\omega)|,
\end{aligned}
\end{align}
where $F_{\max}$ is introduced in Lemma \ref{L5.3}. For fixed $\omega\in\Omega$, one has
\begin{align*}
\begin{aligned}
& \frac{1}{2}\frac{d}{dt}|G_t(\omega) - G'_t(\omega)|^2  = (G_t(\omega) - G'_t(\omega))(\frac{d}{dt}G_t(\omega) - \frac{d}{dt}G'_t(\omega))\\
& \hspace{0.8cm} =(G_t(\omega) - G'_t(\omega))\bigg(F[\mu_{G_t}](G_t(\omega),\omega) - F[\mu_{G'_t}](G'_t(\omega),\omega)\bigg)\\
& \hspace{0.8cm} =(G_t (\omega)- G'_t(\omega))\bigg(F[\mu_{G_t}](G_t(\omega),\omega) - F[\mu_{G'_t}](G_t(\omega),\omega)\bigg)\\
& \hspace{1.2cm}+(G_t(\omega) - G'_t(\omega))\bigg(F[\mu_{G'_t}](G_t(\omega),\omega) - F[\mu_{G'_t}](G'_t(\omega),\omega)\bigg)\\
& \hspace{0.8cm} \leq \kappa M\|G_t-G'_t\|_\infty\cdot|G_t(\omega)-G'_t(\omega)|+\kappa I|G_t(\omega)-G'_t(\omega)|^2,
\end{aligned}
\end{align*}
where we used $(\mathcal A_1),(\mathcal A_2)$ and 
\begin{align*}
    W_1(\mu_{G_t},\mu_{G'_t})\le\|G_t-G'_t\|_\infty.
\end{align*}
This yields
\begin{align*}
    \frac{d}{dt}\|G_t-G'_t\|_\infty\le\kappa(M+I)\|G_t-G'_t\|_\infty,
\end{align*}
hence,
\begin{equation}\label{E6.1.3}
\|G_t-G'_t\|_\infty\le e^{\kappa(M+I)t}\|G_0-G'_0\|_\infty.
\end{equation}
Now, we apply \eqref{E5.3.3} and \eqref{E6.1.3} to \eqref{E6.1.2} to obtain
\begin{align*}
    \|\tilde P(G)-\tilde P(G')\|_\infty\lesssim\|G_0-G'_0\|_\infty.
\end{align*}
Thus, the Poincar\'e map $\tilde{P}$ is continuous, and then we use the standard Schauder fixed point theorem to obtain a periodic function in $\mathcal G_0(\Delta,\tilde\Delta)$ which corresponds to a periodic continuous graph measure solution to \eqref{A-1}.
\end{proof}

\section{Conclusion}\label{sec:7}
In this paper, we have provided an existence of a time-periodic measure solutions to the parameterized nonlocal continuity equation using optimal transport and metric geometry. The parameterzed nonlocal continuity equation can be regarded as the infinite class of the nonlocal continuity equation parametrized by real parameters. For the proposed parameterized nonlocal continuity equation, we provide a global existence of periodic measure-valued solutions using Schauder's fixed point argument under a suitable sufficient framework which was formulated in terms of system parameters and initial data. Moreover, we also introduce the concept of graph measure solution which is supported on a graph. For the existence of this special type of measure-valued solution, we also provide sufficient frameworks leading to an existence of measure-valued solutions and graph measure solution. Of course, there are several issues that we did not investigate in this work. For example, since our existence proof is based on fixed point argument, we do not know whether the constructed measure-valued solution and graph measure solutions are unique or not (see Theorem \ref{main}). Second, we expect that a periodic measure-valued solution is stable in some neighborhood. The extension of the current results toward those problems might be an interesting and challenging open problem for future work.

\end{document}